\documentclass[12pt]{iopart}

\makeatletter
\expandafter\let\csname equation*\endcsname\relax
\expandafter\let\csname endequation*\endcsname\relax
\let\[\relax \let\]\relax
\makeatother
\usepackage{amsmath}

\usepackage{iopams}      

\usepackage{geometry}
\usepackage{amsthm}
\usepackage{mathtools}
\usepackage{booktabs}
\usepackage{enumitem}
\usepackage{graphicx}
\usepackage{subcaption}
\usepackage[colorlinks=true,linkcolor=blue,citecolor=blue,urlcolor=blue]{hyperref}
\usepackage[capitalize,nameinlink]{cleveref}

\eqnobysec

\theoremstyle{plain}
\newtheorem{theorem}{Theorem}[section]
\newtheorem{lemma}[theorem]{Lemma}
\newtheorem{proposition}[theorem]{Proposition}
\newtheorem{corollary}[theorem]{Corollary}
\newtheorem{definition}[theorem]{Definition}
\theoremstyle{remark}
\newtheorem{remark}[theorem]{Remark}
\newtheorem{assumption}[theorem]{Assumption}

\crefname{equation}{}{}
\crefname{assumption}{Assumption}{Assumptions}
\Crefname{assumption}{Assumption}{Assumptions}
\crefname{section}{section}{sections}
\Crefname{section}{Section}{Sections}
\crefname{subsection}{section}{sections}
\Crefname{subsection}{Section}{Sections}
\crefname{subsubsection}{section}{sections}
\Crefname{subsubsection}{Section}{Sections}

\newcommand{\R}{\mathbb{R}}
\newcommand{\abs}[1]{\left\lvert #1 \right\rvert}
\newcommand{\E}{\mathbb{E}}
\newcommand{\Prob}{\mathbb{P}}
\newcommand{\Om}{\Omega}
\newcommand{\norm}[1]{\left\lVert #1 \right\rVert}
\newcommand{\inner}[2]{\left\langle #1,\, #2 \right\rangle}
\newcommand{\xdag}{x^{\dagger}}
\newcommand{\ys}{y^{\sigma}}
\newcommand{\xsk}[1][k]{x^{\sigma}_{#1}}

\newcommand{\Lap}{(-\Delta)}
\DeclareMathOperator{\dom}{\mathcal{D}}

\DeclareMathOperator{\nul}{\mathcal{N}}

\begin{document}

\title[Iterated Lavrentiev regularization under random noise]{Iterated
Lavrentiev regularization under random noise: variational source conditions and
rates in expectation}

\author{Ankit Singh$^1$, Hemant Sharma$^2$ and Kiran Davuluri$^3$}

\address{$^1$ Department of Applied Sciences and Humanities, SVKM's College of
Engineering, Shirpur 425405, India}
\address{$^2$ Department of Mathematics, Tungal School of Basic and Applied
Sciences, Jamkhandi 587301, India}
\address{$^3$ Infiswift AI, Milpitas, California 95053, USA}

\ead{sharmahemant39@gmail.com}

\begin{abstract}
The convergence theory of iterated Lavrentiev regularization for nonlinear
monotone ill-posed equations is deterministic: the data error is bounded by a known level that the
discrepancy principle uses explicitly. Under Gaussian noise no such bound exists. For a discretized two-dimensional
nonlinear inverse source problem for the Poisson equation, we analyse convergence in a finite-energy Gaussian noise model, keeping the
linearized and nonlinear cases apart. For the linearized iteration the noisy and
noise-free iterates differ by one resolvent applied to the noise; this gives an exact singular-value variance formula, a priori rates in
expectation, and the main result: for a family of discretizations whose distance functions are bounded uniformly
in the mesh, the rate is governed by that decay, which relaxes the classical
range condition. The rate degrades gracefully, down to
logarithmic, as the underlying continuum error loses smoothness, and a balancing
rule attains it without knowing the smoothness. For the full nonlinear iteration
under the order-one source condition, a truncated discrepancy principle fed only
the noise level attains $\sigma_E^{1/2}$ in expectation, in the energy noise
level $\sigma_E\asymp\sigma/h$, the rare event of large noise being controlled by
concentration. The rough nonlinear regime and the general-exponent nonlinear
rate are left open and supported numerically for kink and jump errors. We separate the deterministic worst-case, Gaussian, and white-noise settings,
prove a worst-case lower bound, and describe where the method is statistically
inefficient relative to spectral cut-off. Heterogeneous media and a cost
comparison with an adjoint-based scheme are given as supplementary material.
\end{abstract}

\vspace{2pc}
\noindent{\it Keywords}: Lavrentiev regularization, statistical inverse problems, random noise, rates in expectation, balancing principle, monotone operators

\vspace{1pc}
\noindent{\it Mathematics Subject Classification}: 65J20, 62G05, 65J15, 47J06, 35R30

\maketitle

\section{Introduction}\label{sec:intro}

Let $X$ be a real Hilbert space and consider the nonlinear ill-posed equation
\begin{equation}\label{eq:opeq}
  F(x)=y,\qquad F:\dom(F)\subseteq X\to X,
\end{equation}
with $F$ monotone, that is, $\inner{F(x)-F(w)}{x-w}\ge0$. For such equations the
iterated Lavrentiev method \cite{Tautenhahn2002,MahaleSingh2022}
\begin{equation}\label{eq:method}
  \xsk[k+1]=\xsk-\bigl(A^{\sigma}_{k}+\alpha_{k}I\bigr)^{-1}
  \bigl[F(\xsk)-\ys+\alpha_{k}(\xsk-x_{0})\bigr],
  \qquad A^{\sigma}_{k}:=F'(\xsk),
\end{equation}
with a decreasing parameter sequence $(\alpha_k)$ satisfying
\begin{equation}\label{eq:alpha}
  \lim_{k\to\infty}\alpha_k=0,\qquad 1\le\frac{\alpha_k}{\alpha_{k+1}}\le\ell\ (\ell>1),
\end{equation}
is attractive because it does not need the adjoint of $F'$. Its deterministic
theory was developed under a general source condition by Mahale and Nair
\cite{MahaleNair2009}, and under a weaker Lipschitz condition on $F'$ by Mahale and
Singh \cite{MahaleSingh2022}. In that theory the data error satisfies a
deterministic bound,
\begin{equation}\label{eq:detnoise}
  \norm{y-\ys}\le\delta,
\end{equation}
and the iteration is stopped by the discrepancy principle. The stopping index
$k_\delta$ is the first index with
\begin{equation}\label{eq:detstop}
  \norm{\alpha_{k}\bigl(A^{\sigma}_{k}+\alpha_{k}I\bigr)^{-1}\bigl(F(\xsk)-\ys\bigr)}
  \le\tau\delta,\qquad\tau>1 ,
\end{equation}
which uses $\delta$ explicitly. Under a H\"older source condition of order one the
resulting rate is $\norm{\xsk[k_\delta]-\xdag}=\mathcal{O}(\delta^{1/2})$
\cite[Thm.~3.5]{MahaleSingh2022}.

\paragraph{The statistical setting} In practice the measurement error is rarely a
vector with a known bounded norm. It is random. The natural model replaces
\eqref{eq:detnoise} by
\begin{equation}\label{eq:statnoise}
  \ys=F(\xdag)+e,\qquad e\ \text{a centred Gaussian noise of level}\ \sigma,
  \qquad \E\norm{e}^2=\sigma^2 .
\end{equation}
Now $\norm{y-\ys}=\norm{e}$ is random, and for Gaussian $e$ one has
$\Prob(\norm{e}>\delta)>0$ for every $\delta$. So no deterministic bound
\eqref{eq:detnoise} exists, the discrepancy stop \eqref{eq:detstop} has no
admissible $\delta$, and the error estimate of \cite{MahaleSingh2022} holds only
on the event $\{\norm{e}\le\delta\}$. A naive pathwise use of it can make the
expected error arbitrarily large, because rare realizations with
$\norm{e}\gg\sigma$ are amplified by the inverse problem.

A probabilistic analysis is therefore needed, one that controls the variance of
the propagated noise and the concentration of $\norm{e}$ instead of assuming a
worst-case bound. The question becomes how $\E\norm{\xsk[k_\sigma]-\xdag}$
behaves as $\sigma\to0$, and which stopping rules achieve the best decay. This
is the viewpoint of statistical inverse problems. For linear problems it is well
developed: minimax rates and oracle inequalities for spectral and Tikhonov
estimators in the white-noise model \cite{BissantzHohageMunkRuymgaart2007,
Cavalier2011,CavalierGolubevPicardTsybakov2002}, adaptive parameter choice by
balancing and Lepskii's method \cite{MathePereverzev2003,Lepskii1990}, minimax
theory of nonparametric estimation \cite{Tsybakov2009}, and the Bayesian view
with its frequentist contraction rates \cite{Stuart2010,KnapikVDVVZ2011}. For
nonlinear problems the theory is far less complete \cite{Nickl2023}, and
iterated Lavrentiev schemes, attractive because they avoid the adjoint, have to
our knowledge not been analysed under Gaussian noise with a stopping rule that
uses only the noise level.

\subsection*{Main mathematical contributions}

We analyse the iterated Lavrentiev method on a discretization of the
two-dimensional nonlinear inverse source problem for $-\Delta u=g(x)$
(\cref{sec:problem}), whose forward operator is monotone in an energy inner
product and satisfies the Lipschitz framework of \cite{MahaleSingh2022}. The
linearized and nonlinear analyses are kept apart throughout.

\begin{enumerate}[label=\textbf{C\arabic*.},leftmargin=3.0em,itemsep=3pt]
\item \textit{Linearized stochastic theory.} For the iteration linearized about
the exact solution, the noisy and noise-free iterates differ by one resolvent
applied to the noise (\cref{lem:biasvar}(a)). This gives an exact variance
formula in resolvent singular values (\cref{lem:variance}), a~priori rates
(\cref{thm:apriori}), and the main result (\cref{thm:vsc}): under a mesh-uniform
approximate source condition (\cref{ass:usc}), the rate is governed by the decay
of a distance function, which controls the bias from both sides
(\crefrange{lem:biasvsc}{lem:biasconv}). The rate degrades gracefully, down to
logarithmic, as the continuum error loses smoothness, and a balancing rule
attains it without knowing the smoothness (\cref{thm:balancing}).

\item \textit{Nonlinear theory under the order-one source condition.} We combine
the deterministic theory of \cite{MahaleSingh2022}, applied in the energy norm,
with a concentration bound for $\norm{e}_E$ (\cref{lem:conc}) and a computable
truncation of the estimator. The discrepancy principle, fed only a noise level,
then attains the rate $\sigma_E^{1/2}$ in expectation, with energy noise level
$\sigma_E\asymp\sigma/h$ (\cref{thm:discrepancy,prop:nonlinear}). The general
rate $\sigma^{\nu/(\nu+1)}$ is proved for the linearized iteration only; the
nonlinear case $\nu<1$ is open (\cref{sec:nonlinear}).

\item \textit{Rough regime, optimality, and validation.} We show where the
nonlinear analysis fails to close for rough errors (\cref{sec:nonlinear}),
separate the worst-case, finite-energy Gaussian, and white-noise settings with a
worst-case lower bound (\cref{sec:optimality}), and validate the theory
numerically (\cref{sec:numerics}); further experiments are in the supplementary
material.
\end{enumerate}

\paragraph{Relation to earlier work} Approximate and variational source
conditions for Lavrentiev regularization of nonlinear monotone equations were
developed by Hofmann, Kaltenbacher, and Resmerita
\cite{HofmannKaltenbacherResmerita2016}, who note that discrepancy-principle
rates under them are delicate. A general principle for lifting deterministic
rates to stochastic noise is due to Gerth, Hofinger, and Ramlau
\cite{GerthHofingerRamlau2017}. The deterministic rates for the iterated method
are due to Mahale and Nair \cite{MahaleNair2009} and Mahale and Singh
\cite{MahaleSingh2022}, and we use them in \cref{thm:discrepancy,prop:nonlinear}.
What is new here is the combination: a finite-energy Gaussian model, under which
a discrepancy rule fed only the noise level makes sense; the exact linearized
variance; a mesh-uniform, computable approximate source condition for the
discretized Poisson operator; the use of the exact cancellation of
\cref{lem:biasspec} in place of a general lifting principle; and the interaction
of noise, nonlinearity, and discretization, including a mesh-noise coupling
result.

\paragraph{Organization} \Cref{sec:setup} sets up the discretized problem and
the stochastic framework, \cref{sec:rates} proves the baseline rates,
\cref{sec:vsc} proves the main result and treats the nonlinear iteration, and
\cref{sec:validation} discusses optimality and numerics. The appendices give a
sharp variance bound, the balancing rule, and a concentration bound; the
supplementary material gives further validation and extensions to heterogeneous
media.

\section{Problem, method, and the stochastic framework}\label{sec:setup}

\subsection{Inverse source problem and deterministic facts}\label{sec:problem}

\paragraph{Inverse source problem} Let $\Om=(0,1)^2$ and let $g:\R\to\R$ be
$C^1$, non-decreasing, with $g(0)=0$. We use the model $g(s)=s^3$. Let $\Lap$
be the Dirichlet Laplacian and $K:=\Lap^{-1}$ its inverse, which is positive,
self-adjoint, and compact on $L^2(\Om)$, with $\norm{K}=\lambda_1^{-1}$ and
$\lambda_1=2\pi^2$. The forward operator of the source problem $-\Delta u=g(x)$,
$u|_{\partial\Om}=0$, is
\begin{equation}\label{eq:F}
  F(x)=K\,g(x)=\Lap^{-1}g(x).
\end{equation}
We seek the source $\xdag$ from a measurement of the state $y=F(\xdag)$.

\paragraph{Standing framework: the discretized problem and two inner products}
We work with the discretized problem, because the choice of Hilbert structure
matters. Discretize $\Om$ by the five-point Laplacian on an $N\times N$ interior
grid with mesh width $h=1/(N+1)$ and $n=N^2$ unknowns (\cref{sec:numerics}).
This gives a symmetric positive definite matrix $S\approx\Lap$, and we set
$K:=S^{-1}$ and $F(x):=K\,g(x)$ on $X:=\R^n$. On $X$ we use two inner products.
The first is the discrete $L^2$ inner product $\inner{u}{w}:=h^2\sum_iu_iw_i$
with norm $\norm{\cdot}$. All noise levels and all error rates are measured in
this norm. The second is the energy inner product
\begin{equation}\label{eq:energy}
  \inner{u}{w}_E:=\inner{u}{Sw},\qquad \norm{u}_E^2=\inner{u}{Su},
\end{equation}
which is a genuine inner product on $\R^n$. In the continuum the analogous
$\inner{u}{-\Delta w}$ is the $H^1_0$ inner product, not equivalent to the $L^2$
one; on $\R^n$ the two are equivalent with constants depending on $n$, uniformly
in one direction: the smallest eigenvalue of $S$ satisfies
$\lambda_1(S)\ge c_0>0$ with $c_0$ independent of $n$ (it converges to $2\pi^2$),
so
\begin{equation}\label{eq:normcomp}
  \norm{u}\le c_0^{-1/2}\,\norm{u}_E\qquad\text{for all }u\in\R^n .
\end{equation}
The reverse inequality carries the factor $\lambda_{\max}(S)^{1/2}\asymp h^{-1}$,
and we never use it in a rate.

The point of the energy inner product is that the monotone theory of
\cite{MahaleSingh2022} applies in the Hilbert space
$(\R^n,\inner{\cdot}{\cdot}_E)$. Indeed, for all $x,w$,
\[
  \inner{F(x)-F(w)}{x-w}_E=\inner{K(g(x)-g(w))}{S(x-w)}=\inner{g(x)-g(w)}{x-w}\ge0,
\]
because $g$ is non-decreasing. In the discrete $L^2$ inner product $F$ is not
monotone in general, since $K$ and the nonlinearity do not commute. We
therefore apply the deterministic theory in the energy space and transfer its
conclusions to the $L^2$ norm through \eqref{eq:normcomp}, which costs a
constant independent of $n$. The continuum problem \eqref{eq:F} provides the
motivation and the asymptotics of the spectrum (\cref{lem:weyl}); we do not
claim an infinite-dimensional theory.

\paragraph{Deterministic facts} The following properties of the discrete
forward operator are obtained by direct computation on the ball
$B_r^{\infty}(\xdag)=\{x:\norm{x-\xdag}_\infty<r\}$, with
$\rho:=\norm{\xdag}_\infty+r$. Throughout, $M_m$ denotes multiplication by the grid
function $m$, that is, the diagonal matrix $\mathrm{diag}(m)$.

\begin{proposition}[deterministic facts]\label{prop:det}
For $x\in B_r^{\infty}(\xdag)$ one has $F'(x)=K\,M_{g'(x)}$. Moreover
\begin{enumerate}[label=\textup{(\alph*)},leftmargin=2.2em,itemsep=1pt]
\item $F'(x)$ is self-adjoint and positive semidefinite in the energy inner
product \eqref{eq:energy}, since $\inner{F'(x)u}{w}_E=\inner{M_{g'(x)}u}{w}$;
its eigenvalues are real and non-negative. Hence $A^{\sigma}_k+\alpha_k I$ is
invertible, with
\[
\norm{(A^{\sigma}_k+\alpha_k I)^{-1}}_E\le\alpha_k^{-1},\qquad
\norm{\alpha_k(A^{\sigma}_k+\alpha_k I)^{-1}}_E\le1;
\]
\item \textup{(Lipschitz)} $\norm{F'(x)-F'(w)}\le L\norm{x-w}_\infty$,
$L=6\rho/\lambda_1$;
\item \textup{(scaling)} $\norm{F'(x)}\le a_0:=3\rho^2/\lambda_1$.
\end{enumerate}
Here, on the grid, $\lambda_1:=\lambda_{\min}(S)=\norm{K}^{-1}$, which lies slightly
below its continuum value $2\pi^2$ (for $N=49$, $19.733$ against $19.739$).
\end{proposition}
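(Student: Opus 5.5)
The plan is to verify each item by direct computation, using only that $S$ is symmetric positive definite in the discrete $L^2$ inner product $\inner{u}{w}=h^2\sum_i u_iw_i$, that $K=S^{-1}$, and that $g(s)=s^3$ acts componentwise.

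\emph{Derivative.} Since $g$ acts componentwise and is $C^1$, the map $x\mapsto g(x)$ has Jacobian $M_{g'(x)}$. The linear map $K$ is its own derivative, so the chain rule gives $F'(x)=K\,M_{g'(x)}$. This holds for every $x\in\R^n$, in particular on $B_r^\infty(\xdag)$.

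\emph{Item (a).} The scaling $h^2$ does not affect symmetry, so $S$ is symmetric in $\inner{\cdot}{\cdot}$, and hence so is $K$. Then
\[
\inner{F'(x)u}{w}_E=\inner{KM_{g'(x)}u}{Sw}=\inner{M_{g'(x)}u}{w}.
\]
The right-hand side is symmetric in $u,w$ because $M_{g'(x)}$ is diagonal. It is non-negative for $u=w$ because $g'\ge0$. Hence $F'(x)$ is self-adjoint and positive semidefinite on the Hilbert space $(\R^n,\inner{\cdot}{\cdot}_E)$. The spectral theorem in that space then gives real eigenvalues $\mu_j\ge0$ and an $E$-orthonormal eigenbasis. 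Consequently $A^\sigma_k+\alpha_kI$ has eigenvalues $\mu_j+\alpha_k\ge\alpha_k>0$, so it is invertible. Its inverse has $E$-norm $\max_j(\mu_j+\alpha_k)^{-1}\le\alpha_k^{-1}$, and $\norm{\alpha_k(A^\sigma_k+\alpha_kI)^{-1}}_E=\max_j\alpha_k/(\mu_j+\alpha_k)\le1$. Because $g'\ge0$ everywhere, this part needs no restriction to the ball, so it applies to the iterates $\xsk$ wherever they lie.

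\emph{Items (b) and (c).} Here I read the norms as operator norms induced by the discrete $L^2$ norm. The key inputs are $\norm{K}=\lambda_{\max}(K)=\lambda_{\min}(S)^{-1}=\lambda_1^{-1}$, which uses that $K$ is symmetric positive definite, and $\norm{M_m}=\norm{m}_\infty$ for a diagonal matrix. Take $x,w\in B_r^\infty(\xdag)$. Then $\abs{x_i},\abs{w_i}<\rho$ for every $i$, so $\norm{g'(x)}_\infty=3\max_i x_i^2\le3\rho^2$, and submultiplicativity gives (c). For (b), write
\[
F'(x)-F'(w)=K\,M_{g'(x)-g'(w)},\qquad \abs{3x_i^2-3w_i^2}=3\abs{x_i+w_i}\,\abs{x_i-w_i}\le6\rho\,\abs{x_i-w_i},
\]
which yields $\norm{F'(x)-F'(w)}\le\lambda_1^{-1}\,6\rho\,\norm{x-w}_\infty$.

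\emph{Numerical remark and main subtlety.} For the five-point stencil, $\lambda_{\min}(S)=8h^{-2}\sin^2(\pi h/2)$. This is increasing in $N$ with limit $2\pi^2$, and evaluating it at $N=49$ gives the quoted value. The only step needing care is bookkeeping of inner products. Self-adjointness and the resolvent bounds in (a) hold in the energy norm, not in the discrete $L^2$ norm, where $KM_{g'(x)}$ is not symmetric. The bounds in (b) and (c), by contrast, are $L^2$ operator-norm bounds and use $\norm{K}=\lambda_1^{-1}$ in that norm. These two must not be conflated.
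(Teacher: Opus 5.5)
Your proof is correct and takes the same route as the paper, which obtains these facts by direct computation. You use the identity $\inner{F'(x)u}{w}_E=\inner{M_{g'(x)}u}{w}$ for self-adjointness and positivity, and the factorization $g'(x)-g'(w)=3(x+w)(x-w)$ together with $\norm{K}=\lambda_1^{-1}$ and $\norm{M_m}=\norm{m}_\infty$ for (b) and (c), exactly as in the heterogeneous verification of the supplementary material. Your observation that (a) needs no restriction to the ball is correct, and so is your check that $\lambda_{\min}(S)=8h^{-2}\sin^2(\pi h/2)\approx19.733$ at $N=49$.
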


Throughout we write $A:=F'(\xdag)=KM$ with $M:=M_{g'(\xdag)}$, and
$e_0:=x_0-\xdag$. Several spectral statements need the multiplier to be
bounded away from zero.

\begin{assumption}[positivity of the multiplier]\label{ass:M}
There are constants $0<m_-\le m_+$ with $m_-\le g'(\xdag(\xi))\le m_+$ for all
grid points $\xi$. We write $\kappa_M:=(m_+/m_-)^{1/2}$.
\end{assumption}

For $g(s)=s^3$ this means that $\xdag$ is bounded away from zero on the grid.
It holds in the experiments of \cref{sec:vscnum} and of the supplementary
material, which use the bounded-away source
$1+\tfrac12\sin(\pi\xi_1)\sin(\pi\xi_2)$ ($\kappa_M\approx1.4$ to $1.5$), but for the
main test source
$\sin(\pi\xi_1)\sin(\pi\xi_2)$ of \cref{sec:numerics} the multiplier vanishes on
the boundary, so on the grid it holds only with a mesh-dependent constant. Under \cref{ass:M} the operator $A$
is similar to a symmetric matrix through a well-conditioned transformation:
\begin{equation}\label{eq:similar}
  A=KM=M^{-1/2}\,B\,M^{1/2},\qquad B:=M^{1/2}KM^{1/2},
\end{equation}
with $B$ symmetric positive definite in the discrete $L^2$ inner product and
$\norm{M^{1/2}}\,\norm{M^{-1/2}}\le\kappa_M$. Consequently $A$ and $B$ share
their eigenvalues $\lambda_1\ge\cdots\ge\lambda_n>0$, fractional powers are
defined by $A^{\nu}:=M^{-1/2}B^{\nu}M^{1/2}$, and the resolvent is bounded in
the $L^2$ norm uniformly in $n$:
\begin{equation}\label{eq:resolventL2}
  \norm{(A+\alpha I)^{-1}}\le\kappa_M\,\alpha^{-1},\qquad
  \norm{\alpha(A+\alpha I)^{-1}}\le\kappa_M ,\qquad \alpha>0 .
\end{equation}
Without \cref{ass:M} we only have the energy-norm bounds of
\cref{prop:det}(a), and the passage to the $L^2$ norm goes through
\eqref{eq:normcomp}.

We impose the H\"older source condition of order one,
\begin{equation}\label{eq:source}
  e_0=A v,\qquad v\in\nul(A)^{\perp},
\end{equation}
together with the smallness conditions of \cite{MahaleSingh2022}. These
guarantee that the iterates are well defined and stay in $B_r^{\infty}(\xdag)$
as long as the data error is bounded by the level at which the theory is
applied. We keep the range condition \eqref{eq:source} through \cref{sec:rates}
and relax it in \cref{sec:vsc}.

\paragraph{The role of the sup norm} Noise and rates are measured in
$\norm{\cdot}$, and the deterministic theory runs in $\norm{\cdot}_E$. The
nonlinearity conditions, however, are $L^\infty$ statements: the Lipschitz bound
of \cref{prop:det}(b) uses $\norm{x-w}_\infty$, and well-posedness holds on
$B_r^{\infty}(\xdag)$. Under the smallness conditions the iterates stay in that
ball, so every nonlinear error term carries one $L^\infty$ factor; a Taylor
remainder, for instance, is bounded by
$\tfrac{L}{2}\norm{e_k}\,\norm{e_k}_\infty\le\tfrac{Lr}{2}\norm{e_k}$. Since
$\norm{\cdot}\le\norm{\cdot}_\infty$ on the grid while the reverse inequality does
not hold uniformly in $n$, this factor is small only in the classical regime. This is why the
nonlinear closure of \cref{sec:nonlinear} works there and breaks in the rough
regime.

We also use the noise-free iterates $x_k$, obtained from \eqref{eq:method} with
exact data $y$ in place of $\ys$, and the corresponding deterministic bias
\begin{equation}\label{eq:bias}
  b_k:=\norm{x_k-\xdag}.
\end{equation}
Under \eqref{eq:source}, the noise-free analysis of \cite{MahaleSingh2022}
applied in the energy space, followed by \eqref{eq:normcomp}, gives a constant
$C_b>0$ with
\begin{equation}\label{eq:biasbound}
  b_k\le C_b\,\alpha_k\,\norm{v},\qquad k\ge0 .
\end{equation}
For the linearized iteration the same bound follows directly from
\cref{lem:biasspec}, with $C_b=\kappa_M\ell$ under \cref{ass:M}.

\paragraph{The linearized iteration} By the linearized iteration we mean
\eqref{eq:method} with the fixed operator $A=F'(\xdag)$ in place of
$A^{\sigma}_k=F'(\xsk)$ and with $F(\xsk)-\ys$ replaced by $A(\xsk-\xdag)-e$.
For it the stochastic analysis is exact and the bias is a single resolvent step
(\cref{lem:biasspec}). The nonlinear iteration is treated on a high-probability
event, with the estimator truncated outside it (\cref{thm:discrepancy,prop:nonlinear}).

\paragraph{Statistical noise model} We make \eqref{eq:statnoise} precise. On
the $n$-dimensional discretization the noise is
\begin{equation}\label{eq:noisegen}
  e=\frac{\sigma}{h\sqrt n}\,\xi,\qquad \xi\sim\mathcal N(0,I_n),
\end{equation}
so that $e$ is an isotropic centred Gaussian vector with $\E\norm{e}^2=\sigma^2$
in the discrete $L^2$ norm and covariance
$\E\inner{e}{u}\inner{e}{w}=(\sigma^2/n)\inner{u}{w}$. This is a
finite-energy model: the variance per orthonormal mode is $\sigma^2/n$, and the
total noise energy is $\sigma^2$ whatever the mesh. It differs
from the white-noise model $\ys=F(\xdag)+\sigma\dot W$ of
\cite{BissantzHohageMunkRuymgaart2007}, in which the variance per mode is
$\sigma^2$ and the total energy grows with $n$; \cref{sec:optimality} compares
the two.

The linearized theory uses $\norm{e}$, which concentrates about $\sigma$
(\cref{lem:conc}). The nonlinear theory, applied in the energy norm, uses
$\norm{e}_E$, for which
\begin{equation}\label{eq:enoise}
  \E\norm{e}_E^2=\frac{\sigma^2}{n}\tr(S)=\sigma^2\,\overline{\lambda}(S),
  \qquad \overline{\lambda}(S):=\tfrac1n\tr(S)\asymp h^{-2},
\end{equation}
so the energy noise level is $\sigma_E:=\sigma\,\overline{\lambda}(S)^{1/2}\asymp\sigma/h$.
This mesh factor is genuine: it is why the nonlinear result needs a mesh-noise
relation to give a rate in $\sigma$, whereas the linearized result does not.

\subsection{Bias-variance decomposition and the variance bound}\label{sec:biasvar}

The error of the noisy iterate splits into the deterministic bias and a
stochastic term that measures how the method propagates the noise. For the
linearized iteration the split is exact. For the nonlinear iteration it holds
on the event where the iterates stay in the ball, at the price of a stability
constant and a maximum over the previous steps.

\begin{lemma}[Bias-variance decomposition]\label{lem:biasvar}
\begin{enumerate}[label=\textup{(\alph*)},leftmargin=2.2em,itemsep=2pt]
\item \textup{(Linearized iteration.)} Let $\xsk$ and $x_k$ be the noisy and the
noise-free iterates of the linearized iteration, both started at $x_0$. Then,
for every $k\ge0$,
\begin{equation}\label{eq:linsplit}
\begin{gathered}
  \xsk[k+1]-x_{k+1}=(A+\alpha_kI)^{-1}e,\quad\text{hence}\\
  \norm{\xsk[k+1]-\xdag}\le b_{k+1}+n_k,\quad n_k:=\norm{(A+\alpha_kI)^{-1}e}.
\end{gathered}
\end{equation}
\item \textup{(Nonlinear iteration.)} There is a constant $C_s>0$ \textup{(}the
stability constant of \cite[Lem.~5.1]{MahaleSingh2022}\textup{)} such that, on
the event that the iterates $\xsk[j]$, $j\le k$, are defined and lie in
$B_r^{\infty}(\xdag)$, and with all norms taken in the energy norm in which
\cite[Lem.~5.1]{MahaleSingh2022} is applied,
\begin{equation}\label{eq:biasvar}
  \norm{\xsk-\xdag}_E\le \underbrace{\norm{x_k-\xdag}_E}_{\text{bias}}
  \;+\;C_s\,\max_{j<k}\,\underbrace{\norm{(A^{\sigma}_j+\alpha_j I)^{-1}e}_E}_{=:n^{\sigma}_j} .
\end{equation}
\end{enumerate}
\end{lemma}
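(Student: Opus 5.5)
For part (a) we compute directly. Set $d_k:=\xsk-x_k$. The noisy linearized step reads
\[
\xsk[k+1]=\xsk-(A+\alpha_kI)^{-1}\bigl[A(\xsk-\xdag)-e+\alpha_k(\xsk-x_0)\bigr].
\]
The noise-free step $x_{k+1}$ is the same formula with $e=0$ and $x_k$ in place of $\xsk$. Subtracting the two, the terms $A\xdag$ and $\alpha_kx_0$ cancel, and we are left with
\[
d_{k+1}=d_k-(A+\alpha_kI)^{-1}(A+\alpha_kI)\,d_k+(A+\alpha_kI)^{-1}e=(A+\alpha_kI)^{-1}e .
\]
This holds for every $k\ge0$. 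We only need $A+\alpha_kI$ to be invertible, which \cref{prop:det}(a) gives; we do not need $d_0=0$, although it holds. The inequality then follows from the triangle inequality, $\norm{\xsk[k+1]-\xdag}\le\norm{x_{k+1}-\xdag}+\norm{d_{k+1}}=b_{k+1}+n_k$.

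For part (b) the plan is to follow the stability argument of \cite[Lem.~5.1]{MahaleSingh2022} in the Hilbert space $(\R^n,\inner{\cdot}{\cdot}_E)$, where $F$ is monotone and $F'$ is self-adjoint and positive semidefinite (\cref{prop:det}(a)). Write $A_k:=F'(x_k)$ and $R^\sigma_k:=(A^\sigma_k+\alpha_kI)^{-1}$. Subtracting the two nonlinear steps gives
\[
d_{k+1}=R^\sigma_k\bigl[A^\sigma_k d_k-(F(\xsk)-F(x_k))\bigr]+R^\sigma_k e+\bigl[(A_k+\alpha_kI)^{-1}-R^\sigma_k\bigr]\bigl[F(x_k)-y+\alpha_k(x_k-x_0)\bigr].
\]
The three terms are handled as follows.
\begin{enumerate}[label=(\roman*),leftmargin=2.2em,itemsep=1pt]
\item The first is a Taylor remainder. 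Using \cref{prop:det}(b) and $\norm{R^\sigma_k}_E\le\alpha_k^{-1}$, together with the sup-norm localization on $B_r^\infty(\xdag)$, it is bounded by $c_1 r\,\norm{d_k}_E$ times a factor controlled by the smallness conditions.
\item The second is exactly $n^\sigma_k$.
\item For the third we use the resolvent identity $(A_k+\alpha_kI)^{-1}-R^\sigma_k=R^\sigma_k(A^\sigma_k-A_k)(A_k+\alpha_kI)^{-1}$. This turns it into $\alpha_k^{-1}\norm{A^\sigma_k-A_k}\,\norm{\alpha_k(A_k+\alpha_kI)^{-1}(F(x_k)-y+\alpha_k(x_k-x_0))}_E$.
\end{enumerate}
The last factor is the noise-free step length, which is of size $\alpha_k\norm{x_k-\xdag}$ or smaller under \eqref{eq:source}. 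So by the Lipschitz bound the third term is again at most $c_2\,\norm{d_k}$ times a small constant.

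This yields a recursion $\norm{d_{k+1}}_E\le q\,\norm{d_k}_E+n^\sigma_k$, with $q<1$ fixed by the smallness conditions of \cite{MahaleSingh2022}, valid on the stated event. Since $d_0=0$, unrolling it gives
\[
\norm{d_k}_E\le\sum_{j<k}q^{k-1-j}n^\sigma_j\le(1-q)^{-1}\max_{j<k}n^\sigma_j .
\]
The triangle inequality with $\norm{x_k-\xdag}_E$ then gives \eqref{eq:biasvar} with $C_s=(1-q)^{-1}$.

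The main obstacle is step (iii), and more generally making every nonlinear factor small. The Lipschitz bound of \cref{prop:det}(b) is in the $L^\infty$ norm, while the recursion runs in $\norm{\cdot}_E$. I would first try to use membership of both iterates in $B_r^\infty(\xdag)$, as in the paragraph on the role of the sup norm, to bound $\norm{d_k}_\infty\le 2r$. One factor is then absorbed into the smallness constant, while the other is kept in the energy norm. This is precisely the point where one is forced to restrict to the event that the iterates remain in the ball, and the constant $C_s$ inherits its dependence on $r$, $L$ and $\ell$ from this step.
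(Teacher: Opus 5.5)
Part (a) is correct and matches the paper's argument. You subtract the two linearized recursions directly instead of computing each error through the cancellation of \cref{lem:biasspec}, which is an immaterial variation. In (b) your skeleton is also the paper's: the noise enters only through $R^\sigma_ke$, there is a one-step contraction, and unrolling from $d_0=0$ gives $C_s=(1-q)^{-1}$. The paper, however, does not derive the contraction. It takes $\norm{w_{j+1}}_E\le C_9\norm{w_j}_E+n^\sigma_j$ with $C_9<1$ directly from \cite[Lem.~5.1]{MahaleSingh2022}. Your attempt to derive it from confinement in $B_r^{\infty}(\xdag)$ does not work, and this is a genuine gap.

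\emph{Step (i).} The resolvent contributes a factor $\alpha_k^{-1}$, so the Taylor term is bounded by $\frac{L}{2\alpha_k}\norm{d_k}_\infty\norm{d_k}_E$. The Lipschitz bound transfers to $\norm{\cdot}_E$ with an $n$-independent constant, but the sup-norm factor remains. Replacing $\norm{d_k}_\infty$ by $2r$ leaves the coefficient $Lr/\alpha_k$, and no fixed smallness condition keeps that below $1$ as $\alpha_k\to0$.

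\emph{Step (iii).} Put $G_k:=F(x_k)-y+\alpha_k(x_k-x_0)$, so that $(A_k+\alpha_kI)^{-1}G_k=x_k-x_{k+1}$. The correct bound is then $\alpha_k^{-1}\norm{A^\sigma_k-A_k}_E\,\norm{x_{k+1}-x_k}_E$. Your version puts only $\alpha_k^{-1}$ in front of $\norm{\alpha_k(A_k+\alpha_kI)^{-1}G_k}_E$ and so loses a factor $\alpha_k^{-1}$. That factor is recovered only because the source condition gives $\norm{x_{k+1}-x_k}\lesssim\alpha_k\norm{v}$, so the term is $\lesssim L\norm{v}\,\norm{d_k}_\infty$. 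This term contains a single factor of $d_k$. Absorbing $\norm{d_k}_\infty\le 2r$ therefore does not leave another factor in the energy norm; it leaves a noise-independent additive term $O(Lr\norm{v})$. Unrolling would then give $\max_{j<k}n^\sigma_j+O(Lr\norm{v})$ instead of \eqref{eq:biasvar}. In (iii) you could instead put the sup norm on the deterministic step, bounding the term by $\lesssim\alpha_k^{-1}L\norm{d_k}\,\norm{x_{k+1}-x_k}_\infty$. That is a multiple of $\norm{d_k}$ if the noise-free steps are $O(\alpha_k)$ in the sup norm. But (i) has two factors of $d_k$ and cannot be treated this way.

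What makes $q<1$ uniform in $k$ is an induction that keeps $L\norm{d_k}_\infty$ a small multiple of $\alpha_k$. This is the mechanism of the induction $\norm{e_k}_\infty\le c_1\alpha_{k-1}$ in the proof of \cref{prop:nonlinear}. It ties $k$ to the noise level: deterministically, $\norm{d_k}\lesssim\delta/\alpha_k$, with $\delta/\alpha_k^{2}$ controlled for $k$ before the stopping index. This is what the citation of \cite[Lem.~5.1]{MahaleSingh2022} supplies. Either cite that lemma for the contraction, as the paper does, or carry out such an induction; confinement to a ball of fixed radius cannot provide it.
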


\begin{proof}
(a) With the linearization, the noisy iteration reads
$\xsk[k+1]=\xsk-(A+\alpha_kI)^{-1}\bigl[A(\xsk-\xdag)-e+\alpha_k(\xsk-x_0)\bigr]$,
and the noise-free one is the same with $e=0$. Write $e^\sigma_k:=\xsk-\xdag$.
Exactly as in the proof of \cref{lem:biasspec} below, the bracket equals
$(A+\alpha_kI)e^\sigma_k-\alpha_ke_0-e$, so that
\[
  e^\sigma_{k+1}=\alpha_k(A+\alpha_kI)^{-1}e_0+(A+\alpha_kI)^{-1}e .
\]
The first term is the noise-free error $e_{k+1}$ of \eqref{eq:biasrec}.
Subtracting gives the identity in \eqref{eq:linsplit}, and the triangle
inequality gives the bound.

(b) Throughout this part $\norm{\cdot}$ denotes the energy norm $\norm{\cdot}_E$.
Fix $k$ and split $\xsk-\xdag=(\xsk-x_k)+(x_k-\xdag)$, so that
$\norm{\xsk-\xdag}\le\norm{\xsk-x_k}+b_k$. Set $w_j:=\xsk[j]-x_j$, with $w_0=0$.
Subtracting the noise-free iteration from \eqref{eq:method} shows that $w_{j+1}$
is obtained from $w_j$ by a recursion in which the data enter in exactly one
place, through $(A^{\sigma}_j+\alpha_jI)^{-1}(\ys-y)=(A^{\sigma}_j+\alpha_jI)^{-1}e$.
All other terms involve $w_j$, the operators $F'(\xsk[j])$ and $F'(x_j)$, and
Taylor remainders, and on the event of the statement they are bounded in
\cite[Lem.~5.1]{MahaleSingh2022} through the Lipschitz condition of
\cref{prop:det}(b). The outcome of that lemma is the one-step contraction
$\norm{w_{j+1}}\le C_9\norm{w_j}+n^\sigma_j$ with $C_9<1$. The deterministic
proof bounds $n^\sigma_j$ by $\delta/\alpha_j$; we keep it as it is, because
under random noise this norm, not $\delta$, is what we can control. Unrolling
from $w_0=0$ gives
$\norm{w_k}\le\sum_{j<k}C_9^{\,k-1-j}n^\sigma_j\le(1-C_9)^{-1}\max_{j<k}n^\sigma_j$,
which is \eqref{eq:biasvar} with $C_s=(1-C_9)^{-1}$. We do not claim that the
$n^\sigma_j$ are monotone in $j$: the operators $A^\sigma_j$ change with $j$, so
only the maximum is available.
\end{proof}

For the propagated noise of the linearized iteration the second moment can be
computed exactly. The computation must be done in the inner product in which
the noise is isotropic, that is, the discrete $L^2$ one, in which $A$ is not
symmetric. This is why singular values, not eigenvalues, appear.

\begin{lemma}[Variance of the propagated noise; linearized iteration]\label{lem:variance}
Let $B_k:=(A+\alpha_kI)^{-1}$ and let $s_1(B_k)\ge\cdots\ge s_n(B_k)$ be its
singular values in the discrete $L^2$ inner product. Then
\begin{equation}\label{eq:var}
  \E\,n_k^2=\frac{\sigma^2}{n}\tr\bigl[B_k^{*}B_k\bigr]
  =\frac{\sigma^2}{n}\sum_{i=1}^{n}s_i(B_k)^2 .
\end{equation}
Under \cref{ass:M}, with $\{\lambda_i\}$ the eigenvalues of $A$ and
\begin{equation}\label{eq:phik}
  \phi_k^2:=\frac1n\sum_{i=1}^{n}\frac{1}{(\lambda_i+\alpha_k)^2}\;\le\;\frac{1}{\alpha_k^2},
\end{equation}
one has the two-sided bound
\begin{equation}\label{eq:var2}
  \kappa_M^{-2}\,\sigma^2\phi_k^2\;\le\;\E\,n_k^2\;\le\;\kappa_M^{2}\,\sigma^2\phi_k^2
  \;\le\;\kappa_M^{2}\,\frac{\sigma^2}{\alpha_k^2},
\end{equation}
with equality in the first two inequalities when $g'(\xdag)$ is constant. Without \cref{ass:M},
\eqref{eq:var} still holds, and $\E n_k^2\le c_0^{-1}\lambda_{\max}(S)\,
\sigma^2/\alpha_k^2$ by \cref{prop:det}(a) and \eqref{eq:normcomp}, a bound
whose constant grows like $h^{-2}$.
\end{lemma}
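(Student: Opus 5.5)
The proof has three steps: an exact trace identity, a similarity argument that brackets the trace by the symmetric one, and a separate treatment of the general case.

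\emph{Trace identity.} We compute $\E n_k^2$ directly from the noise model \eqref{eq:noisegen}. Take any orthonormal basis $\{u_i\}$ of $(\R^n,\inner{\cdot}{\cdot})$ and expand $B_ke=\sum_i\inner{B_ke}{u_i}u_i$. Then
\[
\E n_k^2=\sum_i\E\inner{e}{B_k^*u_i}^2 .
\]
The covariance identity $\E\inner{e}{u}\inner{e}{w}=(\sigma^2/n)\inner{u}{w}$ turns each term into $(\sigma^2/n)\norm{B_k^*u_i}^2$. Summing gives $(\sigma^2/n)\tr(B_kB_k^*)=(\sigma^2/n)\tr(B_k^*B_k)$. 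Here $^*$ is the adjoint in the discrete $L^2$ inner product, which for this weighted Euclidean inner product is just the matrix transpose. The trace is the sum of squared singular values, which is \eqref{eq:var}. The bound $\phi_k^2\le\alpha_k^{-2}$ in \eqref{eq:phik} is immediate from $\lambda_i>0$.

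\emph{Two-sided bound under \cref{ass:M}.} Use the similarity \eqref{eq:similar}, which gives
\[
B_k=M^{-1/2}(B+\alpha_kI)^{-1}M^{1/2}=:M^{-1/2}R_kM^{1/2},
\]
where $R_k$ is symmetric with eigenvalues $(\lambda_i+\alpha_k)^{-1}$, so that $\tr(R_k^2)=n\phi_k^2$. For the upper bound, write $\tr(B_k^*B_k)=\norm{B_k}_{HS}^2$ and use $\norm{PQR}_{HS}\le\norm{P}\,\norm{Q}_{HS}\,\norm{R}$. This gives
\[
\norm{B_k}_{HS}^2\le\norm{M^{-1/2}}^2\norm{M^{1/2}}^2\,\tr(R_k^2)\le\kappa_M^2\,n\phi_k^2 .
\]
For the lower bound, invert the similarity, $R_k=M^{1/2}B_kM^{-1/2}$, and apply the same inequality to get $n\phi_k^2\le\kappa_M^2\tr(B_k^*B_k)$. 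Multiplying by $\sigma^2/n$ yields \eqref{eq:var2}, and the last inequality in \eqref{eq:var2} follows from \eqref{eq:phik}. For the equality case: when $g'(\xdag)$ is constant, $M=mI$ commutes with everything, $\kappa_M=1$, and $B_k=R_k$.

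\emph{Case without \cref{ass:M}.} Identity \eqref{eq:var} did not use \cref{ass:M}, so it still holds. For the crude bound, use \eqref{eq:normcomp} pathwise:
\[
n_k^2\le c_0^{-1}\norm{B_ke}_E^2\le c_0^{-1}\alpha_k^{-2}\norm{e}_E^2 ,
\]
where the second inequality is the energy resolvent bound of \cref{prop:det}(a). Taking expectations and using \eqref{eq:enoise}, $\E\norm{e}_E^2=\sigma^2\overline\lambda(S)\le\sigma^2\lambda_{\max}(S)$, gives the stated bound with its $h^{-2}$ constant.

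I expect no real obstacle. The only point needing care is that the trace and the adjoint are taken in the weighted $L^2$ inner product, where $A$ is not symmetric. The Hilbert–Schmidt inequality is exactly what avoids needing the eigenvalues of $B_k^*B_k$.
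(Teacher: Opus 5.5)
Your proof is correct and follows the paper's argument: the trace identity from the isotropic covariance, the similarity $B_k=M^{-1/2}(B+\alpha_kI)^{-1}M^{1/2}$ used in both directions at the cost of the factor $\kappa_M$, and the norm comparison \eqref{eq:normcomp} for the case without \cref{ass:M}. The differences are minor. You bound the Hilbert--Schmidt norm in one step where the paper brackets each singular value $s_i(B_k)$ separately. Without \cref{ass:M} you go through $\E\norm{e}_E^2=\sigma^2\overline{\lambda}(S)$ instead of the $L^2$ operator norm of $B_k$, which even gives the slightly smaller constant $c_0^{-1}\overline{\lambda}(S)$.
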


\begin{proof}
Write $n_k^2=\inner{B_ke}{B_ke}=\inner{e}{B_k^{*}B_ke}$, where $B_k^{*}$ is the
adjoint in the discrete $L^2$ inner product. Taking expectations and using
linearity of the trace,
\[
  \E\norm{B_ke}^2=\E\,\tr\bigl(B_k^{*}B_k\,ee^{*}\bigr)
  =\tr\bigl(B_k^{*}B_k\,\E[ee^{*}]\bigr)=\frac{\sigma^2}{n}\tr\bigl(B_k^{*}B_k\bigr),
\]
because $\E[ee^{*}]=\frac{\sigma^2}{n}I$ by \eqref{eq:noisegen}. The trace of
$B_k^{*}B_k$ is the sum of the squared singular values, which gives
\eqref{eq:var}. We stress that $A$ is not symmetric in this inner product, so
this trace is not in general $\sum_i(\lambda_i+\alpha_k)^{-2}$.

Under \cref{ass:M}, the similarity \eqref{eq:similar} gives
$B_k=M^{-1/2}(B+\alpha_kI)^{-1}M^{1/2}$ with $B$ symmetric, and the singular
values of $(B+\alpha_kI)^{-1}$ are exactly $(\lambda_i+\alpha_k)^{-1}$. For any
matrices, $s_i(XCY)\le\norm{X}\,s_i(C)\,\norm{Y}$, and applying this in both
directions with $X=M^{-1/2}$, $Y=M^{1/2}$ gives
$\kappa_M^{-1}(\lambda_i+\alpha_k)^{-1}\le s_i(B_k)\le\kappa_M(\lambda_i+\alpha_k)^{-1}$
for every $i$. Squaring, summing, and dividing by $n$ gives \eqref{eq:var2}.
The last inequality in \eqref{eq:var2} and \eqref{eq:phik} follow from
$\lambda_i\ge0$. When $g'(\xdag)$ is constant, $M$ is a multiple of the
identity, $\kappa_M=1$, and the first two inequalities are equalities. The bound without
\cref{ass:M} follows from $\norm{B_ku}\le c_0^{-1/2}\norm{B_ku}_E\le
c_0^{-1/2}\alpha_k^{-1}\norm{u}_E\le c_0^{-1/2}\lambda_{\max}(S)^{1/2}\alpha_k^{-1}\norm{u}$.
\end{proof}

The two competing effects are now explicit. The bias \eqref{eq:biasbound}
decreases like $\alpha_k$. The root mean square of the propagated noise,
$(\E n_k^2)^{1/2}\le\kappa_M\sigma/\alpha_k$, increases as $\alpha_k\downarrow0$.
Balancing the two governs the rate. In the experiments reported in the supplementary
material the
Monte Carlo variance matches $\sigma^2\phi_k^2$ to within one percent although
$\kappa_M=1.5$ there, so the two-sided bound \eqref{eq:var2} is conservative.

\begin{remark}[Linearization and the nonlinear remainder]\label{rem:linearization}
The identities \eqref{eq:linsplit} and \eqref{eq:var} hold for the linearized
iteration only. For the nonlinear iteration, where $A^{\sigma}_k=F'(\xsk)$ is
random and correlated with $e$, they become bounds. The operator replacement is controlled
by the Lipschitz condition of \cref{prop:det}(b) and the resolvent identity:
\[
  (A^{\sigma}_k+\alpha_kI)^{-1}-(A+\alpha_kI)^{-1}
  =(A^{\sigma}_k+\alpha_kI)^{-1}(A-A^{\sigma}_k)(A+\alpha_kI)^{-1},
\]
so that, in the energy norm,
\[
\norm{(A^{\sigma}_k+\alpha_kI)^{-1}-(A+\alpha_kI)^{-1}}_E\le\alpha_k^{-2}
\norm{A-A^{\sigma}_k}_E\lesssim\alpha_k^{-2}L\norm{\xsk-\xdag}_\infty .
\]
This is of
higher order than the resolvent itself as long as
$L\norm{\xsk-\xdag}_\infty\lesssim\alpha_k$, which is exactly the regime of the
classical source condition (\cref{sec:nonlinear}). Similarly, writing
$\xsk-\xdag=\eta_k+\zeta_k$ with $\eta_k$ the linearized error, the remainder
obeys
\[
  \norm{\zeta_{k+1}}\le \frac{L}{2}\,\norm{\xsk-\xdag}\,\norm{\xsk-\xdag}_\infty
  + \text{(propagated)} ,
\]
a term quadratic in the current error. For rough initial errors it is no longer
of higher order (\cref{sec:nonlinear,rem:nlopen}). The experiments use the full
nonlinear iteration \eqref{eq:method} throughout.
\end{remark}

\subsection{Spectral preliminaries: bias bound, decay, effective dimension}
\label{sec:spectral}

The crude bound $\phi_k^2\le\alpha_k^{-2}$ of \cref{lem:variance} is enough for
the rates below. A self-contained control of the bias, and the eigenvalue decay
and effective dimension that govern the attainable accuracy, follow from the
spectrum of $A=F'(\xdag)$. By \cref{prop:det}(a) this operator is self-adjoint
and positive in $\inner{\cdot}{\cdot}_E$. The sharper spectral form of the
variance bound is deferred to \cref{app:sharpvar}. Let $\{(\lambda_i,u_i)\}_{i\ge1}$
be the eigenpairs, $\lambda_1\ge\lambda_2\ge\cdots>0$, with $\{u_i\}$ an
$E$-orthonormal basis, so that $f(A)w=\sum_i f(\lambda_i)\inner{w}{u_i}_E u_i$
for Borel $f$.

\subsubsection{A self-contained bias bound}

\begin{lemma}[Bias of the noise-free iteration]\label{lem:biasspec}
For the linearization of \eqref{eq:method} about $\xdag$ with exact data, the
noise-free error $e_k:=x_k-\xdag$ satisfies
\begin{equation}\label{eq:biasrec}
  e_k=\alpha_{k-1}(A+\alpha_{k-1}I)^{-1}e_0,
\end{equation}
which is a single Lavrentiev step at parameter $\alpha_{k-1}$. The reason is that
the penalty in \eqref{eq:method} is anchored at the fixed $x_0$, so the linear
dependence on the previous iterate cancels and no product filter builds up. Under
\cref{ass:M} and the source condition $e_0=A^{\nu}v$, $0<\nu\le1$, with
$A^{\nu}$ defined through \eqref{eq:similar},
\begin{equation}\label{eq:biasspec}
  b_k=\norm{e_k}\le\kappa_M\Bigl(\sup_{\lambda\ge0}\frac{\alpha_{k-1}\lambda^{\nu}}
  {\lambda+\alpha_{k-1}}\Bigr)\norm{v}
  =\kappa_M\nu^{\nu}(1-\nu)^{1-\nu}\alpha_{k-1}^{\nu}\,\norm{v}\le\kappa_M\alpha_{k-1}^{\nu}\,\norm{v}.
\end{equation}
In the energy norm, where $A$ is self-adjoint, the same bound holds with
$\kappa_M=1$ and $\norm{v}_E$ in place of $\norm{v}$.
\end{lemma}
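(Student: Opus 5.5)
The plan is to rewrite the linearized noise-free recursion so that the previous iterate drops out, and then to bound the resulting single filter through the similarity \eqref{eq:similar}.

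\textbf{Step 1: the recursion \eqref{eq:biasrec}.} With exact data the linearized iteration reads $x_{k+1}=x_k-(A+\alpha_kI)^{-1}[A e_k+\alpha_k(x_k-x_0)]$. I write $x_k-x_0=e_k-e_0$, so the bracket becomes $(A+\alpha_kI)e_k-\alpha_ke_0$. Applying the resolvent gives
\[
e_{k+1}=e_k-e_k+\alpha_k(A+\alpha_kI)^{-1}e_0=\alpha_k(A+\alpha_kI)^{-1}e_0 .
\]
This is \eqref{eq:biasrec} after shifting the index, valid for every $k\ge1$. Invertibility of $A+\alpha_kI$ is guaranteed by \cref{prop:det}(a). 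No induction is needed, since the dependence on $e_k$ cancels identically; this is the anchoring remark in the statement.

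\textbf{Step 2: reduce the $L^2$ bound to a scalar supremum.} Under \cref{ass:M} I set $\alpha=\alpha_{k-1}$ and use \eqref{eq:similar} together with the definition $A^\nu=M^{-1/2}B^\nu M^{1/2}$. The functional calculus of the symmetric $B$ then gives
\[
\alpha(A+\alpha I)^{-1}A^\nu v=M^{-1/2}\,\alpha(B+\alpha I)^{-1}B^\nu\,M^{1/2}v .
\]
Here I use $(A+\alpha I)^{-1}=M^{-1/2}(B+\alpha I)^{-1}M^{1/2}$. Since $B$ is symmetric positive definite in the discrete $L^2$ inner product, the middle operator is symmetric with spectrum $\{\alpha\lambda_i^\nu/(\lambda_i+\alpha)\}$. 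Its norm is therefore at most $\sup_{\lambda\ge0}\alpha\lambda^\nu/(\lambda+\alpha)$. Multiplying by $\norm{M^{-1/2}}\norm{M^{1/2}}\le\kappa_M$ gives the first inequality in \eqref{eq:biasspec}.

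\textbf{Step 3: evaluate the supremum.} I put $\lambda=\alpha t$, which turns the expression into $\alpha^\nu\, t^\nu/(1+t)$.

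1. For $\nu<1$, setting the derivative to zero gives $\nu(1+t)=t$, that is, $t=\nu/(1-\nu)$. Substituting back yields the value $\nu^\nu(1-\nu)^{1-\nu}$.
2. For $\nu=1$ the function $t/(1+t)$ increases to $1$ without being attained, and the formula still holds with the convention $0^0=1$.

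The constant $\nu^\nu(1-\nu)^{1-\nu}$ is a weighted geometric mean of $\nu$ and $1-\nu$. By the weighted AM–GM inequality it is at most $\nu\cdot\nu+(1-\nu)(1-\nu)\le 1$, or more simply each factor is at most $1$. This gives the final $\le\kappa_M\alpha_{k-1}^\nu\norm{v}$.

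\textbf{Step 4: the energy-norm statement.} Here I use the $E$-orthonormal eigenbasis $\{u_i\}$ of \cref{sec:spectral} directly. Then $\norm{e_k}_E^2=\sum_i\bigl(\alpha\lambda_i^\nu/(\lambda_i+\alpha)\bigr)^2\inner{v}{u_i}_E^2$, and the same scalar supremum bounds it by the stated quantity with $\kappa_M=1$.

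The only delicate point is the consistency of the fractional power. One must check that $A^\nu$ defined via $B$ agrees with the $E$-functional calculus; it does, since both are the unique nonnegative root of the same diagonalizable operator. Beyond that, the proof is routine.
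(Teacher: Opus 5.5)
Your proposal is correct and follows the paper's own proof. It uses the same cancellation via $x_k-x_0=e_k-e_0$, the same transport through the similarity \eqref{eq:similar} at the cost of $\kappa_M$, and the same critical-point evaluation of the supremum; your rescaling $\lambda=\alpha t$ is only cosmetic. Your Step 4 and the consistency check on $A^{\nu}$ spell out the energy-norm claim, which the paper only asserts. There, the cleaner justification is that $f(A)$ for a diagonalizable matrix does not depend on the chosen diagonalization, rather than uniqueness of a ``root''.
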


\begin{proof}
With exact data $y=F(\xdag)=K g(\xdag)$, linearizing $F$ about $\xdag$ replaces
$F(x_k)-y$ by $A(x_k-\xdag)$ and $F'(x_k)$ by $A$. The iteration
\eqref{eq:method} then reads
\[
  x_{k+1}=x_k-(A+\alpha_kI)^{-1}\bigl[A(x_k-\xdag)+\alpha_k(x_k-x_0)\bigr].
\]
Write $x_k-x_0=(x_k-\xdag)-(x_0-\xdag)=e_k-e_0$ and subtract $\xdag$ from both
sides. The bracket becomes $Ae_k+\alpha_k e_k-\alpha_k e_0=(A+\alpha_kI)e_k-\alpha_ke_0$,
and so
\[
  e_{k+1}=e_k-(A+\alpha_kI)^{-1}(A+\alpha_kI)e_k+\alpha_k(A+\alpha_kI)^{-1}e_0
  =\alpha_k(A+\alpha_kI)^{-1}e_0 .
\]
The term $e_k$ cancels exactly. This is \eqref{eq:biasrec} after shifting the
index by one. The cancellation is a consequence of the penalty being anchored at
the fixed point $x_0$ rather than at the previous iterate.

For the bound, insert the source condition $e_0=A^{\nu}v$ and use the
similarity \eqref{eq:similar}: $\alpha(A+\alpha I)^{-1}A^{\nu}=M^{-1/2}\bigl[\alpha(B+\alpha I)^{-1}B^{\nu}\bigr]M^{1/2}$,
where the bracket is a function of the symmetric matrix $B$ and has $L^2$ norm
equal to the supremum of that function over the spectrum. Together with
$\norm{M^{-1/2}}\norm{M^{1/2}}\le\kappa_M$,
\[
  b_k=\norm{\alpha_{k-1}(A+\alpha_{k-1}I)^{-1}A^{\nu}v}
  \le\kappa_M\sup_{\lambda\ge0}\frac{\alpha_{k-1}\lambda^{\nu}}{\lambda+\alpha_{k-1}}\;\norm{v}.
\]
To evaluate the supremum, set $h(\lambda)=\lambda^{\nu}/(\lambda+\alpha)$ with
$\alpha=\alpha_{k-1}$. For $0<\nu<1$ the derivative
$h'(\lambda)=\lambda^{\nu-1}\bigl(\nu(\lambda+\alpha)-\lambda\bigr)/(\lambda+\alpha)^2$
vanishes at $\lambda^{\ast}=\nu\alpha/(1-\nu)$, which is a maximum. Substituting,
$h(\lambda^{\ast})=\nu^{\nu}(1-\nu)^{1-\nu}\alpha^{\nu-1}$, so
$\alpha\,h(\lambda^{\ast})=\nu^{\nu}(1-\nu)^{1-\nu}\alpha^{\nu}$. Both factors
$\nu^{\nu}$ and $(1-\nu)^{1-\nu}$ lie in $(0,1]$, which gives the final
inequality in \eqref{eq:biasspec}. For $\nu=1$ the supremum of
$\alpha\lambda/(\lambda+\alpha)$ is $\alpha$, attained as $\lambda\to\infty$, and
the same conclusion holds.
\end{proof}

The single-step form \eqref{eq:biasrec} fixes the qualification of the linearized
scheme at $\nu=1$, in line with \cref{rem:nu}. The iteration in \eqref{eq:method}
matters only through the nonlinear remainder, treated in \cref{sec:nonlinear}.
This also makes \eqref{eq:biasbound} self-contained for the linearized scheme.

\subsubsection{Eigenvalue decay and effective dimension}

\begin{lemma}[Weyl decay]\label{lem:weyl}
Let \cref{ass:M} hold. Then there exist $0<c_-\le c_+$, depending on $m_\pm$
but not on $n$, with $c_-\,i^{-1}\le\lambda_i\le c_+\,i^{-1}$ for all
$1\le i\le n$.
\end{lemma}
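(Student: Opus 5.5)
We reduce the statement to the spectrum of the discrete Laplacian, using the similarity \eqref{eq:similar} and the min–max principle. The eigenvalues of $A=KM$ coincide with those of the symmetric matrix $B=M^{1/2}KM^{1/2}$, which is congruent to $K$. The Rayleigh quotient satisfies
\[
\frac{\inner{Bu}{u}}{\inner{u}{u}}=\frac{\inner{Kw}{w}}{\inner{M^{-1}w}{w}},\qquad w=M^{1/2}u .
\]
Under \cref{ass:M} we have $m_+^{-1}\norm{w}^2\le\inner{M^{-1}w}{w}\le m_-^{-1}\norm{w}^2$. Since $u\mapsto M^{1/2}u$ is a bijection of subspaces of each dimension, the Courant–Fischer characterization gives, for every $i$,
\[
m_-\,\mu_i(K)\le\lambda_i\le m_+\,\mu_i(K),
\]
where $\mu_1(K)\ge\cdots\ge\mu_n(K)$ are the eigenvalues of $K=S^{-1}$ in decreasing order. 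This is Ostrowski's theorem in Rayleigh-quotient form. It reduces the claim to showing $c'_-\,i^{-1}\le\mu_i(K)\le c'_+\,i^{-1}$ with constants independent of $n$. We then take $c_\pm=m_\pm c'_\pm$.

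For the five-point Laplacian on the $N\times N$ grid the eigenvalues are explicit:
\[
\lambda_{p,q}(S)=\frac{4}{h^2}\Bigl(\sin^2\frac{p\pi h}{2}+\sin^2\frac{q\pi h}{2}\Bigr),\qquad 1\le p,q\le N .
\]
Since $ph\le Nh<1$, the argument $p\pi h/2$ lies in $(0,\pi/2)$. On this interval $\tfrac{2}{\pi}t\le\sin t\le t$, which gives
\[
4(p^2+q^2)\le\lambda_{p,q}(S)\le\pi^2(p^2+q^2),
\]
uniformly in $h$. The task is therefore to bound the ordering of the lattice values $p^2+q^2$ with $1\le p,q\le N$.

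The $i$-th smallest eigenvalue $\lambda_{(i)}(S)$ is the main step, and it is a lattice-point counting argument.

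\emph{Counting bound.} For $t>0$ let $\mathcal{N}(t)=\#\{(p,q)\in\{1,\dots,N\}^2:\ p^2+q^2\le t\}$. Comparing with the area of a quarter disc gives $\mathcal{N}(t)\le \pi t/4$ for all $t$. When $\sqrt{t/2}\le N$, the square $\{1,\dots,\lfloor\sqrt{t/2}\rfloor\}^2$ lies inside the set, so $\mathcal{N}(t)\ge\lfloor\sqrt{t/2}\rfloor^2\ge t/8$ once $t\ge 8$. The truncation at $N$ only matters for $t$ near $2N^2$. There we use that every index satisfies $i\le n=N^2$ and that $p^2+q^2\le 2N^2$ always holds.

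\emph{Ordering bound.} From these counts, the $i$-th smallest value $\tau_i$ of $p^2+q^2$ satisfies $c\,i\le\tau_i\le C\,i$ for all $1\le i\le n$, with absolute constants $c,C$:

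\begin{itemize}
\item The upper bound $\tau_i\le 8i$, for $8i\le 2N^2$, follows from $\mathcal{N}(8i)\ge i$.
\item For larger $i$ the crude bound $\tau_i\le 2N^2\le 2n\le$ const$\cdot i$ suffices, because then $i\gtrsim N^2$.
\item The lower bound $\tau_i\ge 4i/\pi$ follows from $\mathcal{N}(t)\le\pi t/4$.
\end{itemize}

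Inverting, $\mu_i(K)=1/\lambda_{(i)}(S)$ lies between $(\pi^2 C i)^{-1}$ and $(4ci)^{-1}$, and the constants are independent of $n$. Handling the small-$t$ and near-$2N^2$ regimes cleanly is the only real care needed.
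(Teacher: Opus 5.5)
Your proposal is correct and follows the paper's proof essentially step for step. It uses the Rayleigh-quotient/min--max comparison $m_-\mu_i(K)\le\lambda_i\le m_+\mu_i(K)$ via $w=M^{1/2}u$, then the explicit five-point spectrum with $4(p^2+q^2)\le\lambda_{p,q}(S)\le\pi^2(p^2+q^2)$, and finally a quarter-disc lattice count. The only difference is that you carry out the counting step explicitly, obtaining $4i/\pi\le\tau_i\le 8i$ with absolute constants, where the paper only asserts that the $j$-th smallest value of $p^2+q^2$ is $\asymp j$.
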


\begin{proof}
By \eqref{eq:similar}, $A$ and $B=M^{1/2}KM^{1/2}$ share their eigenvalues, and
$B$ is symmetric positive definite in the discrete $L^2$ inner product, so its
eigenvalues are described by the min-max principle. For $u\in\R^n$ put
$v=M^{1/2}u$. Then
\[
  \frac{\inner{Bu}{u}}{\inner{u}{u}}=\frac{\inner{Kv}{v}}{\inner{M^{-1}v}{v}},
  \qquad
  m_+^{-1}\norm{v}^2\le\inner{M^{-1}v}{v}\le m_-^{-1}\norm{v}^2 .
\]
Since $u\mapsto M^{1/2}u$ maps $i$-dimensional subspaces onto $i$-dimensional
subspaces, the min-max characterization gives
$m_-\,\mu_i(K)\le\lambda_i(B)\le m_+\,\mu_i(K)$ for every $i$, where $\mu_i(K)$
are the eigenvalues of $K=S^{-1}$ in decreasing order.

It remains to recall the discrete Weyl law. The eigenvalues of the five-point
Laplacian on the $N\times N$ grid are
$\mu_{pq}(S)=\frac{4}{h^2}\bigl(\sin^2\tfrac{p\pi h}{2}+\sin^2\tfrac{q\pi h}{2}\bigr)$,
$1\le p,q\le N$. Since $\frac{2}{\pi}t\le\sin t\le t$ on $[0,\tfrac{\pi}{2}]$,
one has $4(p^2+q^2)\le\mu_{pq}(S)\le\pi^2(p^2+q^2)$, with constants independent
of $h$. Counting lattice points in a quarter disc, the $j$-th smallest value of
$p^2+q^2$ is $\asymp j$. Hence $\mu_j(S)\asymp j$ and $\mu_j(K)\asymp j^{-1}$
for $1\le j\le n$, uniformly in $n$. Combining the two displays gives the claim.
In the continuum the same argument, with Weyl's law
$\mu_j(\Lap)\sim(4\pi/|\Om|)j$ for the Dirichlet Laplacian on
$\Om\subset\R^2$, gives $\lambda_i\asymp i^{-1}$ for all $i$.
\end{proof}

\begin{definition}\label{def:neff}
The effective dimension at level $\alpha$ on the $n$-dimensional
discretization is
\[
  N_{\rm eff}(\alpha):=\tr\bigl[A(A+\alpha I)^{-1}\bigr]=\sum_{i=1}^{n}\frac{\lambda_i}
  {\lambda_i+\alpha}.
\]
It counts the modes that are effectively inverted at regularization level
$\alpha$. Under \cref{ass:M}, \cref{lem:weyl} gives
$\lambda_i/(\lambda_i+\alpha)\asymp(1+\alpha i)^{-1}$, and hence
\begin{equation}\label{eq:neff}
  N_{\rm eff}(\alpha)\asymp\sum_{i=1}^{n}\frac{1}{1+\alpha i}\asymp
  \alpha^{-1}\log(1+\alpha n).
\end{equation}
The dependence on $n$ is essential. With $\lambda_i\asymp i^{-1}$ the series
$\sum_i\lambda_i/(\lambda_i+\alpha)$ diverges, so $A(A+\alpha I)^{-1}$ is not
trace class in the continuum two-dimensional problem, and the effective
dimension is a property of the discretized problem. For $\alpha n\gg1$ it
behaves like $\alpha^{-1}\log(\alpha n)$; for $\alpha n\lesssim1$ it is
$\asymp n$, that is, all modes are inverted.
\end{definition}

\section{Rates in expectation}\label{sec:rates}

\subsection{A priori parameter choice}

\begin{proposition}[A priori rate in expectation; linearized iteration]\label{thm:apriori}
Let \cref{prop:det} and \cref{ass:M} hold, and let the source condition
\eqref{eq:source} hold. For the linearized iteration, choose the stopping index
$k_\sigma$ a~priori so that $\alpha_{k_\sigma-1}\asymp\sigma^{1/2}$. Then
\begin{equation}\label{eq:apriorirate}
  \Bigl(\E\,\norm{\xsk[k_\sigma]-\xdag}^2\Bigr)^{1/2}\le C\,\sigma^{1/2},
\end{equation}
with $C=\sqrt{2}\,\kappa_M(\norm{v}+1)$ up to the $\asymp$ constant.
\end{proposition}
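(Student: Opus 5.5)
The plan is to use the exact splitting of \cref{lem:biasvar}(a) at index $k=k_\sigma-1$. It gives
\[
  \xsk[k_\sigma]-\xdag=e_{k_\sigma}+(A+\alpha_{k_\sigma-1}I)^{-1}e,
\]
where $e_{k_\sigma}=\alpha_{k_\sigma-1}(A+\alpha_{k_\sigma-1}I)^{-1}e_0$ is deterministic by \cref{lem:biasspec}. I will then take the second moment. The elementary inequality $\norm{a+b}^2\le2\norm{a}^2+2\norm{b}^2$ gives
\[
  \E\norm{\xsk[k_\sigma]-\xdag}^2\le 2b_{k_\sigma}^2+2\,\E n_{k_\sigma-1}^2 .
\]
This is where the factor $\sqrt2$ in the constant comes from. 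Since $e$ is centred and the bias is deterministic, the cross term actually vanishes, so one could even get equality without the factor $2$. I keep the cruder form to match the stated constant.

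Next I bound the two terms by earlier results. For the bias, I apply \cref{lem:biasspec} with $\nu=1$; the source condition \eqref{eq:source} is exactly $e_0=A^1v$. This gives $b_{k_\sigma}\le\kappa_M\alpha_{k_\sigma-1}\norm{v}$. For the variance, \eqref{eq:var2} of \cref{lem:variance} gives $\E n_{k_\sigma-1}^2\le\kappa_M^2\sigma^2/\alpha_{k_\sigma-1}^2$. Combining,
\[
  \Bigl(\E\norm{\xsk[k_\sigma]-\xdag}^2\Bigr)^{1/2}\le\sqrt2\,\kappa_M\Bigl(\alpha_{k_\sigma-1}\norm{v}+\frac{\sigma}{\alpha_{k_\sigma-1}}\Bigr),
\]
using $\sqrt{a^2+b^2}\le a+b$.

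Finally, I insert the choice of stopping index. With $\alpha_{k_\sigma-1}=c\,\sigma^{1/2}$, both terms equal $\sigma^{1/2}$ up to the constant $c$ or $c^{-1}$. The bound is therefore $\sqrt2\,\kappa_M(\norm{v}+1)\sigma^{1/2}$ up to the $\asymp$ constant, which is \eqref{eq:apriorirate}.

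The one point needing care is that such an index exists, since $\alpha_k$ only takes discrete values. By \eqref{eq:alpha}, $\alpha_k\downarrow0$ with ratio at most $\ell$. Taking $k_\sigma-1$ to be the first index with $\alpha_k\le\sigma^{1/2}$ (for $\sigma$ small, so that $\alpha_0>\sigma^{1/2}$) gives $\sigma^{1/2}/\ell<\alpha_{k_\sigma-1}\le\sigma^{1/2}$. The $\asymp$ constant is thus at most $\ell$. I expect no real obstacle: all the work is done by the exact single-resolvent structure established in \cref{lem:biasspec} and \cref{lem:biasvar}(a).
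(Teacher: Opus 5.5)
Your proof is correct and follows the paper's argument step for step. You use the exact split of \cref{lem:biasvar}(a) at index $k_\sigma-1$, the inequality $(a+b)^2\le2a^2+2b^2$, \cref{lem:biasspec} with $\nu=1$ for the bias, \eqref{eq:var2} for the noise, and the geometric schedule to place $\alpha_{k_\sigma-1}$ in $(\sigma^{1/2}/\ell,\sigma^{1/2}]$. Your remark that the cross term vanishes, because $e$ is centred and the bias is deterministic, is a valid small sharpening that would remove the factor $\sqrt2$; the paper does not use it.
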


\begin{proof}
By \cref{lem:biasvar}(a) with $k$ replaced by $k-1$, and the elementary
inequality $(a+b)^2\le2a^2+2b^2$,
\[
  \norm{\xsk-\xdag}^2\le 2b_k^2+2n_{k-1}^2 .
\]
Take expectations. The bias is deterministic, and \cref{lem:biasspec} with
$\nu=1$ gives $b_k\le\kappa_M\alpha_{k-1}\norm{v}$. The noise term is bounded
by \cref{lem:variance}: $\E n_{k-1}^2\le\kappa_M^2\sigma^2/\alpha_{k-1}^2$. Hence
\[
  \E\norm{\xsk-\xdag}^2\le 2\kappa_M^2\norm{v}^2\alpha_{k-1}^2
  +\frac{2\kappa_M^2\sigma^2}{\alpha_{k-1}^2}=:f(\alpha_{k-1}).
\]
The function $f(\alpha)=a\alpha^2+b\alpha^{-2}$ with $a=2\kappa_M^2\norm{v}^2$ and
$b=2\kappa_M^2\sigma^2$ is minimized where $f'(\alpha)=2a\alpha-2b\alpha^{-3}=0$,
that is, at $\alpha^4=b/a=\sigma^2/\norm{v}^2$, so at
$\alpha=\norm{v}^{-1/2}\sigma^{1/2}$. The minimizer scales like $\sigma^{1/2}$,
and any choice $\alpha\asymp\sigma^{1/2}$ changes the value of $f$ by at most a
constant factor. Taking for definiteness $\alpha=\sigma^{1/2}$ gives
$f(\sigma^{1/2})=2\kappa_M^2(\norm{v}^2+1)\sigma$. Taking square roots and using
$(p^2+q^2)^{1/2}\le p+q$ gives \eqref{eq:apriorirate} with
$C=\sqrt2\,\kappa_M(\norm{v}+1)$. The geometric schedule \eqref{eq:alpha}
contains an index with $\sigma^{1/2}/\ell\le\alpha_{k_\sigma-1}\le\sigma^{1/2}$,
and the factor $\ell$ is absorbed in the $\asymp$ constant.
\end{proof}

\begin{remark}[General smoothness]\label{rem:nu}
For a H\"older source condition of order $\nu\in(0,1]$, $e_0=A^\nu v$, the bias
obeys $b_k\lesssim\alpha_{k-1}^{\nu}$ by \cref{lem:biasspec}. The same balancing
gives $\alpha_{k_\sigma-1}\asymp\sigma^{1/(\nu+1)}$ and the rate
$(\E\norm{\xsk[k_\sigma]-\xdag}^2)^{1/2}=\mathcal{O}\bigl(\sigma^{\nu/(\nu+1)}\bigr)$.
The qualification of Lavrentiev regularization is $\nu=1$, which gives the best
rate $\sigma^{1/2}$ obtained above.
\end{remark}

\subsection{The discrepancy principle with the noise level}

The a~priori choice needs $\nu$; the discrepancy principle does not, but under
random noise it must be fed a noise level, since no pathwise bound
\eqref{eq:detnoise} exists. We show that this gives the rate $\sigma_E^{1/2}$ for
the full nonlinear iteration under the classical source condition, once the
estimator is truncated on the rare event of large noise. The key fact is that
the noise norm concentrates sharply in high dimension.

\begin{lemma}[Concentration of the noise norm]\label{lem:conc}
For the isotropic Gaussian $e$ of \eqref{eq:noisegen} and any $\beta>1$,
\begin{equation}\label{eq:conc}
  \Prob\bigl(\norm{e}>\beta\sigma\bigr)\le \exp\!\bigl(-c_\beta\,n\bigr),
  \qquad c_\beta=\tfrac12\bigl(\beta^2-1-2\log\beta\bigr)>0 .
\end{equation}
In the energy norm, with $\sigma_E$ as in \eqref{eq:enoise},
\begin{equation}\label{eq:conce}
  \Prob\bigl(\norm{e}_E>\beta\sigma_E\bigr)\le
  \exp\!\Bigl(-\tfrac12(\beta-1)^2\,\rho(S)\Bigr),
  \qquad \rho(S):=\frac{\tr(S)}{\lambda_{\max}(S)},
\end{equation}
where $\rho(S)$ is the stable rank of $S$.
\end{lemma}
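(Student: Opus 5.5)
For \eqref{eq:conc} I would use the Chernoff bound for a chi-square variable. By \eqref{eq:noisegen}, $\norm{e}^2=h^2\sum_i e_i^2=\frac{\sigma^2}{n}\norm{\xi}_2^2$, using $h^2\cdot\frac{\sigma^2}{h^2 n}=\sigma^2/n$. Hence $\norm{e}>\beta\sigma$ is the event $\norm{\xi}_2^2>\beta^2 n$, with $\norm{\xi}_2^2\sim\chi^2_n$. For $0<t<\tfrac12$, Markov's inequality applied to $\exp(t\norm{\xi}_2^2)$ gives
\[
\Prob\bigl(\norm{\xi}_2^2>\beta^2n\bigr)\le (1-2t)^{-n/2}e^{-t\beta^2 n}.
\]
The choice $t=\tfrac12(1-\beta^{-2})$ minimizes the exponent and yields $\exp\bigl(-\tfrac n2(\beta^2-1-\log\beta^2)\bigr)=\exp(-c_\beta n)$. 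Positivity of $c_\beta$ follows from $\log u<u-1$ for $u=\beta^2>1$. This part is routine.

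For \eqref{eq:conce} I would use Gaussian concentration of Lipschitz functions, since the chi-square argument no longer applies directly. Write $\norm{e}_E^2=\inner{e}{Se}=\frac{\sigma^2}{n}\,\xi^{\top}S\xi$, where $S$ is the matrix itself, because $h^2\cdot\frac{\sigma^2}{h^2n}$ again cancels. So $\norm{e}_E=\frac{\sigma}{\sqrt n}\,\norm{S^{1/2}\xi}_2=:f(\xi)$.

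\begin{enumerate}[label=\arabic*.,leftmargin=2em]
\item \emph{Lipschitz constant.} $f$ is Lipschitz on $\R^n$ with constant $\frac{\sigma}{\sqrt n}\norm{S^{1/2}}_2=\frac{\sigma}{\sqrt n}\lambda_{\max}(S)^{1/2}$.
\item \emph{Mean bound.} By Jensen, $\E f\le(\E f^2)^{1/2}=\sigma\bigl(\tr(S)/n\bigr)^{1/2}=\sigma_E$, using \eqref{eq:enoise}.
\item \emph{Concentration.} The Gaussian concentration inequality (Tsirelson--Ibragimov--Sudakov), $\Prob(f-\E f>s)\le\exp(-s^2/(2L_f^2))$, applied with $s=(\beta-1)\sigma_E$ gives
\[
\Prob(f>\beta\sigma_E)\le\Prob\bigl(f-\E f>(\beta-1)\sigma_E\bigr)\le\exp\!\Bigl(-\frac{(\beta-1)^2\sigma_E^2\,n}{2\sigma^2\lambda_{\max}(S)}\Bigr).
\]
\item \emph{Identify the exponent.} Since $\sigma_E^2 n/\sigma^2=\tr(S)$, the exponent equals $\tfrac12(\beta-1)^2\rho(S)$.
\end{enumerate}

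The only nontrivial input is the Gaussian concentration theorem, which I would cite rather than prove. The remaining care is bookkeeping: the $h^2$ weights of the discrete inner product must cancel the $h^{-2}$ in \eqref{eq:noisegen}, and the operator norm of $S^{1/2}$ must be taken in the Euclidean sense for the Lipschitz constant. The Jensen step is the main subtlety, because it replaces the unknown mean $\E f$ by $\sigma_E$ and is exactly what allows $\beta-1$, rather than something smaller, to appear in the exponent.
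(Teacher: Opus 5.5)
Your proposal is correct and follows the paper's proof essentially step for step. The first tail is the Chernoff bound for $\chi^2_n$, and for the energy tail you use, as the paper does, the representation $\norm{e}_E=(\sigma/\sqrt n)\abs{S^{1/2}\xi}$, the Lipschitz constant $\lambda_{\max}(S)^{1/2}$, the Jensen bound $\E\abs{S^{1/2}\xi}\le\tr(S)^{1/2}$, and Gaussian concentration with deviation $(\beta-1)\tr(S)^{1/2}$. The only difference is that you carry out the Chernoff optimisation explicitly, whereas the paper just cites it as standard.
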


\begin{proof}
The $L^2$ bound \eqref{eq:conc} is the statement that $Y:=n\norm{e}^2/\sigma^2=|\xi|^2$
is $\chi^2_n$; the tail $\Prob(Y>\beta^2n)\le e^{-c_\beta n}$ is the standard
Chernoff bound, with $c_\beta=\tfrac12(\beta^2-1-2\log\beta)>0$ for $\beta>1$
because $x-1-\log x>0$ for $x=\beta^2>1$.

For \eqref{eq:conce}, note $\norm{e}_E^2=h^2e^{\top}Se=(\sigma^2/n)\,\xi^{\top}S\xi$, so
$\norm{e}_E=(\sigma/\sqrt n)\,\abs{S^{1/2}\xi}$
with $\abs{\cdot}$ the Euclidean norm. The map $\xi\mapsto\abs{S^{1/2}\xi}$ is
Lipschitz with constant $\lambda_{\max}(S)^{1/2}$, and its mean satisfies
$\E\abs{S^{1/2}\xi}\le(\E\abs{S^{1/2}\xi}^2)^{1/2}=\tr(S)^{1/2}$. By Gaussian
concentration \cite[Thm.~5.6]{BoucheronLugosiMassart2013}, for $t>0$,
$\Prob(\abs{S^{1/2}\xi}>\tr(S)^{1/2}+t)\le\exp(-t^2/(2\lambda_{\max}(S)))$. Taking
$t=(\beta-1)\tr(S)^{1/2}$ and rescaling by $\sigma/\sqrt n$, and using
$\E\norm{e}_E^2=\sigma^2\tr(S)/n=\sigma_E^2$, gives
$\Prob(\norm{e}_E>\beta\sigma_E)\le\exp(-\tfrac12(\beta-1)^2\tr(S)/\lambda_{\max}(S))$,
which is \eqref{eq:conce}.
\end{proof}

\begin{remark}[The two concentration exponents]\label{rem:dimfree}
The $L^2$ exponent in \eqref{eq:conc} is of order $n$. The energy exponent in
\eqref{eq:conce} is of order $\rho(S)=\tr(S)/\lambda_{\max}(S)$. For the
five-point Laplacian $\tr(S)\asymp n\,h^{-2}$ and $\lambda_{\max}(S)\asymp h^{-2}$,
so $\rho(S)\asymp n$ as well, and both events are exponentially unlikely in $n$.
This is what lets the nonlinear discrepancy theorem use an energy noise level
$\beta\sigma_E$ on a high-probability event. The price is only that the level is
$\sigma_E\asymp\sigma/h$ rather than $\sigma$, which is tracked below.
\end{remark}

\paragraph{The truncated estimator} Gaussian noise is unbounded, and when
$\norm{e}$ is large the deterministic theory gives no control. We therefore
analyse a truncated estimator that uses only observable quantities. Assume a
known a~priori bound on the initial error in the sup norm,
\begin{equation}\label{eq:apriorierr}
  \norm{\xdag-x_0}_\infty\le\varrho_0,
\end{equation}
which is available from the smallness conditions and does not depend on the mesh.
Fix $k_{\max}$ and set $\varrho:=r+\varrho_0$, with $r$ the radius of the
confinement ball $B_r^\infty(\xdag)$ of \cref{prop:det}; then $x_0$ and $\xdag$
lie in $B_\varrho^\infty(x_0)$, and $B_r^\infty(\xdag)\subseteq B_\varrho^\infty(x_0)$. Run \eqref{eq:method} with the stopping rule
\eqref{eq:detstop}, the deterministic level $\delta$ replaced by the energy noise
level $\sigma_E$ of \eqref{eq:enoise}. If the rule fires at some
$k_\sigma\le k_{\max}$ and every iterate up to $k_\sigma$ lies in the observable
sup-norm ball $B_\varrho^\infty(x_0)=\{x:\norm{x-x_0}_\infty\le\varrho\}$, output
$\hat x:=\xsk[k_\sigma]$; otherwise output the fallback $\hat x:=x_0$. This test uses
only the computed iterates and the known $x_0$, so $\hat x$ is computable. Since $x_0,\xdag\in B_\varrho^\infty(x_0)$, the triangle
inequality gives $\norm{\hat x-\xdag}_\infty\le2\varrho$ always, hence
$\norm{\hat x-\xdag}\le\lvert\Om\rvert^{1/2}\,2\varrho=2\varrho=:R$ on the unit
square, a bound independent of the mesh. On the event where the deterministic
theory applies, the iterates stay in $B_r^\infty(\xdag)\subseteq B_\varrho^\infty(x_0)$,
the fallback is not used, and $\hat x$ is the discrepancy-stopped iterate.

\begin{proposition}[Discrepancy rule in expectation; full nonlinear iteration]\label{thm:discrepancy}
Let \cref{prop:det} and the order-one source condition \eqref{eq:source} hold,
together with the smallness conditions of \cite{MahaleSingh2022} for the energy
noise level $\delta_E=\beta\sigma_E$, for some $\beta\in(1,\tau)$, with
$\sigma_E=\sigma\,\overline{\lambda}(S)^{1/2}$ as in \eqref{eq:enoise}. Let
$\hat x$ be the truncated estimator above, run with the stopping threshold
$\tau\sigma_E$ in \eqref{eq:detstop}. Then there are constants $C_1>0$ and
$R=2\varrho$ with
\begin{equation}\label{eq:discrate}
  \Bigl(\E\,\norm{\hat x-\xdag}^2\Bigr)^{1/2}\;\le\;C_1\,\sigma_E^{1/2}
  \;+\;R\,e^{-\frac14(\beta-1)^2\rho(S)}.
\end{equation}
On the five-point mesh $\sigma_E\asymp\sigma/h$. For fixed $n$ the tail term does
not vanish as $\sigma\to0$, so \eqref{eq:discrate} is a joint statement in
$(\sigma,n)$; the mesh-noise coupling in the supplementary material turns it into
a rate in $\sigma$. Only the order-one source condition is covered; the
general-$\nu$ rate is the linearized statement of \cref{thm:vsc}.
\end{proposition}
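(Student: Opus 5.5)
The plan is to split the expectation over the good event $G:=\{\norm{e}_E\le\beta\sigma_E\}$ and its complement, and to write
\[
\E\norm{\hat x-\xdag}^2=\E\bigl[\norm{\hat x-\xdag}^2\mathbf 1_G\bigr]+\E\bigl[\norm{\hat x-\xdag}^2\mathbf 1_{G^c}\bigr].
\]
Each piece is handled separately, and the two are recombined with $(p^2+q^2)^{1/2}\le p+q$. This mirrors the structure of \eqref{eq:discrate}: a deterministic rate on $G$ plus an exponentially small tail.

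\textbf{Good event.} On $G$ the data error satisfies the deterministic bound $\norm{\ys-y}_E\le\delta_E:=\beta\sigma_E$ in the Hilbert space $(\R^n,\inner{\cdot}{\cdot}_E)$. There $F$ is monotone and $F'(x)$ is $E$-self-adjoint and positive semidefinite by \cref{prop:det}(a). The smallness conditions are assumed at level $\delta_E$, so the deterministic theory of \cite{MahaleSingh2022} applies pathwise. The stopping threshold is $\tau\sigma_E=(\tau/\beta)\delta_E$ with $\tau/\beta>1$, so it is an admissible discrepancy constant for level $\delta_E$. Hence the rule fires at some $k_\sigma$, the iterates stay in $B_r^\infty(\xdag)\subseteq B_\varrho^\infty(x_0)$, and \cite[Thm.~3.5]{MahaleSingh2022} gives $\norm{\xsk[k_\sigma]-\xdag}_E\le C\delta_E^{1/2}$. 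Transferring with \eqref{eq:normcomp} gives $\norm{\hat x-\xdag}\le c_0^{-1/2}C\beta^{1/2}\sigma_E^{1/2}$, a constant $C_1$ that does not depend on $n$.

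To make $\hat x=\xsk[k_\sigma]$ on $G$, we need $k_\sigma\le k_{\max}$. I will choose $k_{\max}$ at least the deterministic bound on the stopping index, which is of order $\log(1/\delta_E)$ for the geometric schedule \eqref{eq:alpha}. This bound is also part of \cite{MahaleSingh2022}. With that choice the fallback is never used on $G$, and the first piece is at most $C_1^2\sigma_E$.

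\textbf{Bad event.} The truncation gives $\norm{\hat x-\xdag}\le R=2\varrho$ deterministically. By \eqref{eq:conce},
\[
\E\bigl[\norm{\hat x-\xdag}^2\mathbf 1_{G^c}\bigr]\le R^2\,\Prob(G^c)\le R^2e^{-\frac12(\beta-1)^2\rho(S)}.
\]
Taking the square root produces exactly the factor $e^{-\frac14(\beta-1)^2\rho(S)}$ in \eqref{eq:discrate}. The statement $\sigma_E\asymp\sigma/h$ is \eqref{eq:enoise} together with $\tr(S)/n\asymp h^{-2}$, which follows from the explicit five-point eigenvalues.

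\textbf{Main obstacle.} The hard step is justifying that \cite{MahaleSingh2022} transfers verbatim to the energy space with constants uniform in $n$. That theory needs a Lipschitz condition on $F'$, but \cref{prop:det}(b) is stated in the $L^2$ operator norm with an $L^\infty$ increment, while the deterministic argument runs in $\norm{\cdot}_E$. I would first check that only the confinement of the iterates to $B_r^\infty(\xdag)$ and the $E$-norm resolvent bounds of \cref{prop:det}(a) enter. Concretely, the required estimate is $\norm{(F'(x)-F'(w))u}_E\le L'\norm{x-w}_\infty\norm{u}_E$. This holds if $F'(x)-F'(w)=KM_{g'(x)-g'(w)}$ and the $E$-operator norm of $KM_m$ is bounded by $\norm{m}_\infty\norm{K}$, which follows from $\inner{KM_mu}{KM_mu}_E=\inner{M_mu}{KM_mu}$. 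If this check goes through, the smallness conditions, stated as hypotheses at level $\delta_E$, carry everything else. Any residual mesh dependence would be hidden in those hypotheses, which is why the result is phrased jointly in $(\sigma,n)$.
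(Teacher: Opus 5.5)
Your proposal is correct and follows the paper's proof essentially step for step. Both split on the event $\{\norm{e}_E\le\beta\sigma_E\}$, reduce to \cite[Thm.~3.5]{MahaleSingh2022} in the energy space with discrepancy constant $\tau/\beta>1$ and $k_{\max}$ at least the deterministic stopping bound, transfer to $L^2$ via \eqref{eq:normcomp}, bound the complement by the truncation radius $R$, and let the tail enter through the square root of the probability in \eqref{eq:conce}. Your extra check that $\norm{KM_m}_E\le\norm{K}\norm{m}_\infty$ (which also uses $\lambda_{\min}(S)\norm{u}^2\le\norm{u}_E^2$) is a correct detail the paper leaves implicit; note, though, that the statement does not require $C_1$ to be independent of $n$, and your argument does not establish that.
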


\begin{proof}
The new element is a reduction to the deterministic theory on an event of high
probability, in the energy norm throughout, with the truncation controlling the
rest. The deterministic step is quoted from \cite{MahaleSingh2022}.

Let $\mathcal{E}=\{\norm{e}_E\le\beta\sigma_E\}$. On $\mathcal{E}$ the data error
satisfies the deterministic bound $\norm{y-\ys}_E\le\delta_E:=\beta\sigma_E$. The
stopping rule uses the threshold $\tau\sigma_E=(\tau/\beta)\,\delta_E$ with
$\tau/\beta>1$, so it is the deterministic discrepancy principle
\eqref{eq:detstop} in the energy norm with noise bound $\delta_E$ and constant
$\tau'=\tau/\beta>1$. The theorem \cite[Thm.~3.5]{MahaleSingh2022}, which is
stated in the Hilbert space $(\R^n,\inner{\cdot}{\cdot}_E)$, then guarantees under
the smallness conditions that the iterates stay in $B_r^\infty(\xdag)$, and hence
in $B_\varrho^\infty(x_0)$ since $\varrho\ge r+\varrho_0\ge r+\norm{\xdag-x_0}_\infty$
gives $B_r^\infty(\xdag)\subseteq B_\varrho^\infty(x_0)$, that the stopping index
is finite (take $k_{\max}$ at least the deterministic bound on it), and that
$\norm{\xsk[k_\sigma]-\xdag}_E\le(C_3\sqrt{\tau'}+C_4)\delta_E^{1/2}$. So on
$\mathcal{E}$ the fallback is not used and, by \eqref{eq:normcomp},
$\norm{\hat x-\xdag}\le c_0^{-1/2}(C_3\sqrt{\tau/\beta}+C_4)\beta^{1/2}\sigma_E^{1/2}=:C_1\sigma_E^{1/2}$.

On $\mathcal{E}^{c}$ we use only the truncation: $\norm{\hat x-\xdag}\le R$
deterministically.

For the second moment, split over $\mathcal{E}$ and $\mathcal{E}^c$:
\[
  \E\norm{\hat x-\xdag}^2
  =\E\bigl[\norm{\hat x-\xdag}^2;\mathcal{E}\bigr]
  +\E\bigl[\norm{\hat x-\xdag}^2;\mathcal{E}^{c}\bigr]
  \le C_1^2\sigma_E+R^2\,\Prob(\mathcal{E}^{c}).
\]
By \eqref{eq:conce}, $\Prob(\mathcal{E}^{c})\le e^{-\frac12(\beta-1)^2\rho(S)}$.
Taking square roots and using $(a+b)^{1/2}\le a^{1/2}+b^{1/2}$ gives
\eqref{eq:discrate}, with the tail entering as
$R\,\Prob(\mathcal{E}^c)^{1/2}=R\,e^{-\frac14(\beta-1)^2\rho(S)}$, the square root
of the probability, as a second-moment bound requires.
\end{proof}

\begin{remark}\label{rem:tail}
The tail term is the price of the truncation; it enters as the square root of a
probability because \eqref{eq:discrate} is a second-moment bound. On the grids
used below $\rho(S)\asymp n=N^2\ge31^2$, so it is far below any relevant error
level.
\end{remark}

\section{An approximate source condition, uniform in the discretization}
\label{sec:vsc}

The rates of \cref{sec:rates} rest on the range source condition
\eqref{eq:source}. On a fixed grid this condition has no content: $A_n$ is
invertible (\cref{ass:M}), so every $e_0$ lies in $\mathcal{R}(A_n)=\R^n$. The
range condition is a statement about the continuum, and we relax it there,
through a family of discretizations whose distance functions are controlled
uniformly in $n$.

\subsection{The continuum operator and a uniform family}\label{sec:family}

Let $A_\infty:=Kg'(\xdag)$ be the continuum linearized forward operator on
$L^2(\Om)$, with $K=\Lap^{-1}$. Since the continuum energy product
$\inner{u}{-\Delta u}$ is not defined on all of $L^2$, we work in $L^2(\Om)$
through the similarity
\begin{equation}\label{eq:similarcont}
  A_\infty=M^{-1/2}B_\infty M^{1/2},\qquad B_\infty:=M^{1/2}KM^{1/2},
\end{equation}
with $M=M_{g'(\xdag)}$. Under \cref{ass:M} the multiplier is bounded above and
below, so $M^{\pm1/2}$ are bounded on $L^2(\Om)$, and $B_\infty$ is compact,
positive, and self-adjoint on $L^2(\Om)$. Hence $A_\infty$ has real positive
eigenvalues $\lambda_i(A_\infty)\asymp i^{-1}$ (\cref{lem:weyl}), fractional
powers $A_\infty^\nu:=M^{-1/2}B_\infty^\nu M^{1/2}$ are well defined, and the
range $\mathcal{R}(A_\infty)=M^{-1/2}\mathcal{R}(B_\infty)$ is a proper dense
subspace of $L^2(\Om)$. The range condition $e_{0,\infty}\in\mathcal{R}(A_\infty)$
is therefore a genuine restriction: it forces $e_{0,\infty}$ to be smooth and to
respect the boundary, and a kink or a jump violates it. For such an $e_{0,\infty}$ the continuum distance function
\begin{equation}\label{eq:distcont}
  d_\infty(R):=\inf_{\norm{v}_{L^2}\le R}\norm{e_{0,\infty}-A_\infty v}_{L^2},\qquad R>0,
\end{equation}
is strictly positive for every $R$ and decays to $0$ as $R\to\infty$, at a rate
set by the residual smoothness of $e_{0,\infty}$; it is $0$ at a finite $R$
exactly when $e_{0,\infty}\in\mathcal{R}(A_\infty)$.

Let $A_n=F'(\xdag)$ be the discretization of \cref{sec:problem} on the
$N\times N$ grid, $n=N^2$, and let $e_{0,n}=\Pi_n e_{0,\infty}$ be the grid
projection of the continuum initial error. Define the discrete distance function
\begin{equation}\label{eq:distance}
  d_n(R):=\inf_{\norm{v}\le R}\,\norm{e_{0,n}-A_nv},\qquad R>0,
\end{equation}
in the discrete $L^2$ norm. On the fixed grid $d_n(R)=0$ once
$R\ge\norm{A_n^{-1}e_{0,n}}$, so $d_n$ collapses at a finite radius; that radius
grows with $n$, because $\norm{A_n^{-1}}\asymp\lambda_n^{-1}\asymp n$. What
matters is the behaviour of $d_n$ on the radii the regularization actually
visits, which we control uniformly.

\begin{assumption}[uniform approximate source condition]\label{ass:usc}
There is an index function $\psi$ (continuous, strictly decreasing on
$(0,\infty)$, $\psi(R)\to0$ as $R\to\infty$) and a constant $C_\psi$ independent
of $n$ such that
\begin{equation}\label{eq:usc}
  d_n(R)\le C_\psi\,\psi(R)\qquad\text{for all } 1\le R\le R_\sigma,
\end{equation}
for all sufficiently large $n$, where $R_\sigma$ is the balancing radius
determined by $\psi$ at noise level $\sigma$, that is, the solution of
$\alpha_\sigma R_\sigma\asymp\psi(R_\sigma)$ with $\alpha_\sigma$ the optimal
parameter of \cref{thm:vsc}. For $\psi(R)\asymp R^{-\theta}$ this is
$R_\sigma\asymp\sigma^{-1/(2\theta+1)}$, and for $\psi(R)\asymp(\log R)^{-p}$ it
is $R_\sigma\asymp\sigma^{-1}(\log\tfrac1\sigma)^{-2p}$; in both cases beyond
$R_\sigma$ the discrete collapse of $d_n$ can only help.
\end{assumption}

\Cref{ass:usc} holds whenever the continuum error obeys $d_\infty(R)\le C\psi(R)$
and the projection is stable in the sense
$\norm{(A_n\Pi_n-\Pi_nA_\infty)v}\le C h^{s}\norm{v}$ on the relevant $v$,
because then $d_n(R)\le d_\infty(R)+Ch^s R\le C\psi(R)$ for $R\le R_\sigma$ once
$h^sR_\sigma\lesssim\psi(R_\sigma)$, which couples mesh and noise. We do not prove
this projection estimate for kink and jump errors. \Cref{sec:vscnum} checks the
decay of $d_n$ numerically on one mesh; a study of its mesh independence is left
open.

\subsection{A sharp bias bound}\label{sec:sharpbias}

Fix $n$ and abbreviate $A=A_n$, $e_0=e_{0,n}$, $d=d_n$. The bias bound below is
the engine of the section; it holds on the fixed grid and is then combined with
\cref{ass:usc} to give a rate uniform in $n$.

\begin{lemma}[Sharp bias bound]\label{lem:biasvsc}
Let \cref{ass:M} hold. For the linearized iteration \eqref{eq:method}, with
the single-step residual filter $r_k(\lambda)=\alpha_{k-1}/(\lambda+\alpha_{k-1})$
of \eqref{eq:biasrec},
\begin{equation}\label{eq:biasvsc}
  b_k=\norm{r_k(A)e_0}\le\kappa_M\inf_{R>0}\bigl[\alpha_{k-1}R+d(R)\bigr]=:\kappa_M\,\Theta(\alpha_{k-1}).
\end{equation}
In the energy norm, with $d$ defined through $\norm{\cdot}_E$, the bound holds
with $\kappa_M=1$.
\end{lemma}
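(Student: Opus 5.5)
The plan is to use the single-step representation \eqref{eq:biasrec} from \cref{lem:biasspec}: for the linearized iteration with exact data, $e_k=\alpha_{k-1}(A+\alpha_{k-1}I)^{-1}e_0=r_k(A)e_0$. This identifies $b_k$ with $\norm{r_k(A)e_0}$. Writing $\alpha:=\alpha_{k-1}$, the whole argument then reduces to a statement about a single Lavrentiev residual operator. For every $R>0$ and every $v$ with $\norm{v}\le R$, split
\[
  r_k(A)e_0=\alpha(A+\alpha I)^{-1}(e_0-Av)+\alpha(A+\alpha I)^{-1}Av .
\]
Each term is bounded separately, and the result is optimized over $v$ and then over $R$.

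For the first term, use the uniform $L^2$ resolvent bound \eqref{eq:resolventL2}, $\norm{\alpha(A+\alpha I)^{-1}}\le\kappa_M$, which gives at most $\kappa_M\norm{e_0-Av}$.

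For the second term, write $\alpha(A+\alpha I)^{-1}A=M^{-1/2}\bigl[\alpha B(B+\alpha I)^{-1}\bigr]M^{1/2}$ via the similarity \eqref{eq:similar}. The bracket is a function of the symmetric matrix $B$, and its norm is $\sup_{\lambda\ge0}\alpha\lambda/(\lambda+\alpha)\le\alpha$ (the $\nu=1$ case computed in \cref{lem:biasspec}). Together with $\norm{M^{-1/2}}\norm{M^{1/2}}\le\kappa_M$, this gives at most $\kappa_M\alpha\norm{v}\le\kappa_M\alpha R$. Adding the two bounds yields $b_k\le\kappa_M\bigl(\alpha R+\norm{e_0-Av}\bigr)$ for every admissible $v$. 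Taking the infimum over $\norm{v}\le R$ turns the second summand into $d(R)$ by \eqref{eq:distance}. Taking the infimum over $R>0$ then gives $\kappa_M\Theta(\alpha_{k-1})$.

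There is one small point to check. The infimum defining $d(R)$ is attained, since the ball is compact in finite dimensions, but attainment is not needed: an $\varepsilon$-minimizer suffices, followed by letting $\varepsilon\to0$.

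In the energy norm the argument is the same but simpler. By \cref{prop:det}(a), $A$ is self-adjoint and positive semidefinite in $\inner{\cdot}{\cdot}_E$, so the spectral calculus in the $E$-orthonormal eigenbasis gives $\norm{\alpha(A+\alpha I)^{-1}}_E\le1$ and $\norm{\alpha A(A+\alpha I)^{-1}}_E\le\alpha$ directly, with no similarity transform. This gives $\kappa_M=1$ when $d$ is defined with $\norm{\cdot}_E$.

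I do not expect a real obstacle. The only delicate step is the second term, where the operator $\alpha A(A+\alpha I)^{-1}$ is not symmetric in the discrete $L^2$ inner product, so its norm cannot be read off from the eigenvalues. This is handled by conjugating with $M^{\pm1/2}$, as in \cref{lem:biasspec}, and it is exactly where the factor $\kappa_M$ (and \cref{ass:M}) enters.
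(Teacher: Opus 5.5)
Your proposal is correct and follows the paper's proof essentially step for step. Like the paper, you split $e_0=(e_0-Av)+Av$, bound $\norm{r_k(A)}\le\kappa_M$ and $\norm{Ar_k(A)}\le\kappa_M\alpha_{k-1}$ through the similarity with $B$, and then take the infimum first over $\norm{v}\le R$ and then over $R>0$; your energy-norm argument with $\kappa_M=1$ is also the intended one.
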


\begin{proof}
We use two facts about the filter $r_k(\lambda)=\alpha_{k-1}/(\lambda+\alpha_{k-1})$,
both from the spectral calculus for the symmetric matrix $B$ of
\eqref{eq:similar}, transported to $A$ by the similarity at the cost of the
factor $\kappa_M$. First, $0<r_k(\lambda)\le1$ for all $\lambda\ge0$, so
$\norm{r_k(A)}\le\kappa_M$. Second, the function
$\lambda\mapsto\lambda r_k(\lambda)=\alpha_{k-1}\lambda/(\lambda+\alpha_{k-1})$ is
increasing in $\lambda$ and tends to $\alpha_{k-1}$ as $\lambda\to\infty$, so
$\norm{Ar_k(A)}\le\kappa_M\alpha_{k-1}$.

Now fix $R>0$ and any $v$ with $\norm{v}\le R$. Split $e_0=(e_0-Av)+Av$. Since
$r_k(A)$ commutes with $A$,
\[
\begin{aligned}
  \norm{r_k(A)e_0}
  &\le\norm{r_k(A)(e_0-Av)}+\norm{r_k(A)Av}\\
  &\le\norm{r_k(A)}\,\norm{e_0-Av}+\norm{Ar_k(A)}\,\norm{v}
  \le\kappa_M\bigl(\norm{e_0-Av}+\alpha_{k-1}R\bigr) .
\end{aligned}
\]
The left-hand side does not depend on $v$. Taking the infimum over all $v$ with
$\norm{v}\le R$ turns $\norm{e_0-Av}$ into $d(R)$ and gives
$\norm{r_k(A)e_0}\le\kappa_M(\alpha_{k-1}R+d(R))$. Since $R>0$ was arbitrary, we can also
take the infimum over $R$, which gives \eqref{eq:biasvsc}. The identity
$b_k=\norm{r_k(A)e_0}$ is \eqref{eq:biasrec}.
\end{proof}

The function $\Theta$ is concave with $\Theta(0^+)=0$. On the fixed grid,
$R=\norm{A^{-1}e_0}$ gives $d(R)=0$ and hence $\Theta(\alpha)\le\alpha\norm{A^{-1}e_0}$,
but $\norm{A^{-1}e_0}$ can grow like $n$, so this bound is useless uniformly in
$n$. The useful content of \eqref{eq:biasvsc} is the value of $\Theta$ at the
balancing parameter $\alpha_\sigma$, which \cref{ass:usc} controls independently
of $n$.

The bound \eqref{eq:biasvsc} is two-sided: the bias also controls the distance
function.

\begin{lemma}[Converse]\label{lem:biasconv}
The bias controls the distance function through
\begin{equation}\label{eq:biasconv}
  d\!\left(\frac{\kappa_M\norm{e_0}}{\alpha_{k-1}}\right)\le b_k .
\end{equation}
\end{lemma}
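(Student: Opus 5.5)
The plan is to exhibit an explicit competitor $v$ in the infimum defining $d(R)$. The natural choice comes from the identity $e_0=r_k(A)e_0+A(A+\alpha_{k-1}I)^{-1}e_0$. Writing $\alpha=\alpha_{k-1}$, this follows from $\alpha(A+\alpha I)^{-1}+A(A+\alpha I)^{-1}=I$. Set
\[
  v:=(A+\alpha I)^{-1}e_0 .
\]
Then $e_0-Av=\alpha(A+\alpha I)^{-1}e_0=r_k(A)e_0$. By \eqref{eq:biasrec} this is exactly the noise-free error $e_k$, so $\norm{e_0-Av}=b_k$.

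The remaining step is to bound the radius of this competitor. By the $L^2$ resolvent bound \eqref{eq:resolventL2}, which rests on \cref{ass:M} and the similarity \eqref{eq:similar},
\[
  \norm{v}=\norm{(A+\alpha I)^{-1}e_0}\le\kappa_M\alpha^{-1}\norm{e_0}=:R_k .
\]
Thus $v$ is admissible in the infimum \eqref{eq:distance} with $R=R_k=\kappa_M\norm{e_0}/\alpha_{k-1}$. It follows that $d(R_k)\le\norm{e_0-Av}=b_k$, which is \eqref{eq:biasconv}.

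There is no real obstacle here; the only points needing care are bookkeeping. First, the index shift $\alpha=\alpha_{k-1}$ must match \eqref{eq:biasrec}. Second, the bound must be the $L^2$ one with factor $\kappa_M$, which is why the radius carries $\kappa_M$. In the energy norm the same argument works with $\kappa_M=1$, using \cref{prop:det}(a). I would also remark that, since $d$ is non-increasing in $R$, the inequality persists for every $R\ge R_k$, and that together with \cref{lem:biasvsc} it shows the bias and the distance function control each other.
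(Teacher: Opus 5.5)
Your proposal is correct and is essentially the paper's proof. The paper's competitor $v_k=\varrho_k(A)e_0$ with $\varrho_k(\lambda)=(1-r_k(\lambda))/\lambda=1/(\lambda+\alpha_{k-1})$ is exactly your $v=(A+\alpha_{k-1}I)^{-1}e_0$, and the paper likewise gets $\norm{e_0-Av}=b_k$ from \eqref{eq:biasrec} and $\norm{v}\le\kappa_M\norm{e_0}/\alpha_{k-1}$ from the similarity under \cref{ass:M}.
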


\begin{proof}
The idea is to exhibit one specific $v_k$ for which $\norm{e_0-Av_k}$ equals the
bias, and then to read off a bound on $\norm{v_k}$. Define
\[
  v_k:=\varrho_k(A)\,e_0,\qquad
  \varrho_k(\lambda):=\frac{1-r_k(\lambda)}{\lambda}
  =\frac{\lambda/(\lambda+\alpha_{k-1})}{\lambda}=\frac{1}{\lambda+\alpha_{k-1}} .
\]
Formally $v_k=(I-r_k(A))A^{-1}e_0$, but the function $\varrho_k$ is bounded on
$[0,\infty)$, so $v_k$ is well defined for every $e_0$, including
the projection of a continuum error outside $\mathcal{R}(A_\infty)$. Then
$Av_k=\lambda\varrho_k(\lambda)$ evaluated at $A$,
which is $(1-r_k(A))e_0$, and hence
\[
  e_0-Av_k=e_0-(I-r_k(A))e_0=r_k(A)e_0,\qquad \norm{e_0-Av_k}=b_k
\]
by \eqref{eq:biasrec}. For the norm of $v_k$, note that
$0<\varrho_k(\lambda)\le\alpha_{k-1}^{-1}$ for $\lambda\ge0$, so, transporting
through \eqref{eq:similar}, $\norm{v_k}\le\kappa_M\norm{e_0}/\alpha_{k-1}=:R_k$. This means $v_k$ is one of the
competitors in the infimum that defines $d(R_k)$. Therefore
$d(R_k)\le\norm{e_0-Av_k}=b_k$, which is \eqref{eq:biasconv}.
\end{proof}

Together, \cref{lem:biasvsc,lem:biasconv} sandwich the bias between two
transforms of the distance function, up to the factor $\kappa_M$, on the radii
$R\le R_\sigma$ that the schedule visits, which is where \cref{ass:usc} applies.
With the exponent-matching duality of \cite{HofmannMathe2007,FlemmingHofmannMathe2011},
the continuum decay $\psi$ is thus both sufficient for a rate and, in the
continuum limit, necessary for it.

\subsection{Rates in expectation, uniform in the mesh}

Balancing the bias bound of \cref{lem:biasvsc} against the noise, with
\cref{ass:usc} making it uniform in $n$, gives the main result. It concerns the
linearized iteration; \cref{sec:nonlinear} examines what survives the
nonlinearity.

\begin{theorem}[Rate under an approximate source condition; linearized iteration]\label{thm:vsc}
Let \cref{ass:M,ass:usc} hold, with index function $\psi$. Set
$\Theta(\alpha)=\inf_{R>0}[\alpha R+\psi(R)]$, which is continuous, concave, and
strictly increasing with $\Theta(0^+)=0$. Then, for the linearized iteration on
the grid of size $n$, with a constant independent of $n$,
\begin{equation}\label{eq:vscrate}
  \Bigl(\E\norm{\xsk[k+1]-\xdag}^2\Bigr)^{1/2}\le\kappa_M\Bigl(C_\psi\,\Theta(\alpha_k)+\frac{\sigma}{\alpha_k}\Bigr),
\end{equation}
for $\alpha_k$ down to the optimal value $\alpha_\sigma$, whose balancing radius
is $R_\sigma$ and hence within the range controlled by \cref{ass:usc} by
construction. An a~priori choice of $k$ that balances the two terms over the
geometric schedule gives a rate determined by $\psi$. In particular:
\begin{enumerate}[label=\textup{(\roman*)},leftmargin=2.4em,itemsep=1pt]
\item \textup{(H\"older)} if $\psi(R)\asymp R^{-\nu/(1-\nu)}$ with $0<\nu<1$,
which in the continuum holds whenever $e_{0,\infty}\in\mathcal{R}(A_\infty^{\nu})$,
then $\Theta(\alpha)\asymp\alpha^{\nu}$ and the rate is $\sigma^{\nu/(\nu+1)}$,
recovering \cref{thm:apriori} and \cref{rem:nu};
\item \textup{(general power)} if $\psi(R)\asymp R^{-\theta}$, $\theta>0$, then
$\Theta(\alpha)\asymp\alpha^{\theta/(\theta+1)}$ and the rate is $\sigma^{\theta/(2\theta+1)}$;
\item \textup{(logarithmic)} if $\psi(R)\asymp(\log R)^{-p}$, $p>0$, so that
$e_{0,\infty}$ satisfies no H\"older source condition of positive order, then
$\Theta(\alpha)\asymp(\log\tfrac1\alpha)^{-p}$ and the rate is $(\log\tfrac1\sigma)^{-p}$.
\end{enumerate}
The balancing rule of \cref{app:balancing} attains the same rate without
knowledge of $\psi$ (\cref{thm:balancing}).
\end{theorem}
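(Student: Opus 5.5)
The plan is to combine the exact bias-variance split of \cref{lem:biasvar}(a) with the sharp bias bound of \cref{lem:biasvsc}, the variance bound of \cref{lem:variance}, and \cref{ass:usc}. Then I compute the Legendre-type transform $\Theta$ for each of the three families of $\psi$.

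\textbf{Step 1 (the master inequality).} By \cref{lem:biasvar}(a), $\xsk[k+1]-\xdag=e_{k+1}+(A+\alpha_kI)^{-1}e$, where $e_{k+1}$ is deterministic and the noise term has mean zero. The cross term therefore vanishes in expectation:
\[
\E\norm{\xsk[k+1]-\xdag}^2=b_{k+1}^2+\E n_k^2 .
\]
Using $(a^2+b^2)^{1/2}\le a+b$ avoids the factor $\sqrt2$ of \cref{thm:apriori}. The variance is bounded by $\kappa_M\sigma/\alpha_k$ through \cref{lem:variance}. The bias is bounded through \cref{lem:biasvsc}, with index shift $b_{k+1}\le\kappa_M\inf_R[\alpha_kR+d_n(R)]$.

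\textbf{Step 2 (uniformity in $n$).} Here I restrict the infimum to radii $1\le R\le R_\sigma$, where \eqref{eq:usc} gives $d_n(R)\le C_\psi\psi(R)$. Assuming $C_\psi\ge1$, this yields $\alpha_kR+d_n(R)\le C_\psi(\alpha_kR+\psi(R))$. The infimum of $\alpha R+\psi(R)$ is attained near the radius $R(\alpha)$ solving $\alpha R\asymp\psi(R)$. Since $\psi$ is decreasing, $R(\alpha)$ is decreasing in $\alpha$, so for $\alpha_k\ge\alpha_\sigma$ one has $R(\alpha_k)\le R(\alpha_\sigma)=R_\sigma$, and the restricted infimum is within a factor $2$ of $\Theta(\alpha_k)$. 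The lower constraint $R\ge1$ only matters when $\alpha_k$ is of order one, where $\Theta$ is bounded anyway; I would absorb this into the constant. This gives \eqref{eq:vscrate}.

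The properties of $\Theta$ (concave, continuous, increasing, $\Theta(0^+)=0$) follow because it is an infimum of affine increasing functions of $\alpha$. For $\Theta(0^+)=0$, pick $R$ large so that $\psi(R)<\varepsilon$, then let $\alpha\to0$.

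\textbf{Step 3 (a priori balancing and the three cases).} I choose $k$ with $\Theta(\alpha_k)\asymp\sigma/\alpha_k$. Over the geometric schedule \eqref{eq:alpha} this costs only a factor $\ell$: $\Theta(\ell\alpha)\le\ell\Theta(\alpha)$ by concavity, and $\sigma/\alpha$ changes by at most $\ell$. The rate is $\Theta(\alpha_\sigma)$.
\begin{itemize}
\item[(ii)] For $\psi=R^{-\theta}$, minimizing $\alpha R+R^{-\theta}$ gives $R\asymp\alpha^{-1/(\theta+1)}$ and $\Theta\asymp\alpha^{\theta/(\theta+1)}$. Solving $\alpha^{\theta/(\theta+1)}=\sigma/\alpha$ gives $\alpha_\sigma=\sigma^{(\theta+1)/(2\theta+1)}$, so the rate is $\sigma^{\theta/(2\theta+1)}$ and $R_\sigma\asymp\sigma^{-1/(2\theta+1)}$, consistent with \cref{ass:usc}.
\item[(i)] This is (ii) with $\theta=\nu/(1-\nu)$. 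It gives $\Theta\asymp\alpha^\nu$ and the rate $\sigma^{\nu/(\nu+1)}$. The continuum claim that $e_0\in\mathcal R(A_\infty^\nu)$ implies $\psi\asymp R^{-\nu/(1-\nu)}$ is the standard interpolation estimate: take $v=A^{\nu-1}r_\gamma(A)$-type truncations via the similarity \eqref{eq:similarcont}.
\item[(iii)] For $\psi=(\log R)^{-p}$, I take $R=\alpha^{-1}(\log\frac1\alpha)^{-p}$ for the upper bound. For the lower bound, if $R\le\alpha^{-1/2}$ then $\psi(R)\gtrsim(\log\frac1\alpha)^{-p}$, and otherwise $\alpha R\ge\alpha^{1/2}$. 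Either way $\Theta\asymp(\log\frac1\alpha)^{-p}$. Balancing gives $\alpha_\sigma\approx\sigma(\log\frac1\sigma)^{p}$ and the rate $(\log\frac1\sigma)^{-p}$.
\end{itemize}
The balancing statement is deferred to \cref{thm:balancing}.

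\textbf{Main obstacle.} I expect the delicate point to be Step 2. The assumption only controls $d_n$ on $[1,R_\sigma]$, so I must verify that the near-minimizing radius for every $\alpha_k\ge\alpha_\sigma$ lies in that window. This uses the monotonicity of $R(\alpha)$, and it requires interpreting $R_\sigma$ consistently up to constants, which I would handle by allowing $R_\sigma$ a fixed multiplicative slack.
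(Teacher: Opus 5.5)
Your proposal is correct and follows the paper's proof essentially step for step: the exact split of \cref{lem:biasvar}(a), the bias bound of \cref{lem:biasvsc} with the infimum restricted to $R\le R_\sigma$ (justified because the balancing radius decreases in $\alpha$, so it stays below $R_\sigma$ for $\alpha_k\ge\alpha_\sigma$), the variance bound of \cref{lem:variance}, the factor-$\ell$ concavity argument on the geometric schedule, and the same computation of $\Theta$ in the three cases. Your only departures refine details the paper leaves implicit: you use the vanishing cross term instead of Minkowski's inequality, get an explicit factor $2$ by evaluating at the crossing point $\alpha R=\psi(R)$, handle the constraint $R\ge1$, and give a two-sided estimate for $\Theta$ in case (iii).
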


\begin{proof}
By \cref{lem:biasvar}(a) and Minkowski's inequality in $L^2(\Prob)$,
\[
  \bigl(\E\norm{\xsk[k+1]-\xdag}^2\bigr)^{1/2}\le b_{k+1}+\bigl(\E n_k^2\bigr)^{1/2}.
\]
The bias is deterministic and, by \cref{lem:biasvsc}, is bounded by
$\kappa_M\Theta_n(\alpha_k)$ with $\Theta_n(\alpha)=\inf_R[\alpha R+d_n(R)]$. The
unconstrained minimizer of $\alpha_k R+\psi(R)$ is $R\asymp\alpha_k^{-1/(\theta+1)}$,
which at the optimal $\alpha_k=\alpha_\sigma$ equals the radius $R_\sigma$ that
\cref{ass:usc} controls; restricting the infimum to $R\le R_\sigma$ and using
$d_n(R)\le C_\psi\psi(R)$ there gives $\Theta_n(\alpha_k)\le C_\psi\Theta(\alpha_k)$
up to a constant, for every $\alpha_k\ge\alpha_\sigma$. The noise term is bounded
by $\kappa_M\sigma/\alpha_k$ through \cref{lem:variance}. This gives
\eqref{eq:vscrate}.

The properties of $\Theta$ follow from its definition as an infimum of affine
functions of $\alpha$: it is concave and non-decreasing, $\Theta(0^+)=0$ because
$\psi(R)\to0$, and strict monotonicity follows from that of $\psi$. Concavity
gives $\Theta(\ell\alpha)\le\ell\,\Theta(\alpha)$, and the geometric schedule
\eqref{eq:alpha} always contains an index within a factor $\ell$ of any
prescribed $\alpha$. So it suffices to balance the two terms of
\eqref{eq:vscrate} as functions of a continuous parameter $\alpha$, down to
$\alpha_\sigma$, which \cref{ass:usc} covers by construction.

Case (ii). Let $\psi(R)\asymp R^{-\theta}$. The infimum
$\Theta(\alpha)=\inf_R[\alpha R+\psi(R)]$ is attained, up to constants, where the
increasing term $\alpha R$ and the decreasing term $\psi(R)$ are equal:
$\alpha R\asymp R^{-\theta}$, that is, $R\asymp\alpha^{-1/(\theta+1)}$. Then
$\Theta(\alpha)\asymp\alpha^{\theta/(\theta+1)}$. Balancing against
$\sigma/\alpha$ gives $\alpha\asymp\sigma^{(\theta+1)/(2\theta+1)}$ and the rate
$\sigma^{\theta/(2\theta+1)}$; the balancing radius is
$R_\sigma\asymp\alpha^{-1/(\theta+1)}\asymp\sigma^{-1/(2\theta+1)}$, which is
exactly the radius \cref{ass:usc} controls, for every $\theta>0$.

Case (i). Set $\theta=\nu/(1-\nu)$. Then $\Theta(\alpha)\asymp\alpha^{\nu}$ and
the rate is $\sigma^{\nu/(\nu+1)}$. That $e_{0,\infty}\in\mathcal{R}(A_\infty^{\nu})$
implies $d_\infty(R)\lesssim R^{-\nu/(1-\nu)}$ is the standard distance-function
estimate under H\"older source conditions \cite{HofmannMathe2007}; the converse
characterizes an interpolation-type class slightly larger than
$\mathcal{R}(A_\infty^{\nu})$ \cite{FlemmingHofmannMathe2011}, which we do not
need.

Case (iii). Let $\psi(R)\asymp(\log R)^{-p}$. Balancing
$\alpha R\asymp(\log R)^{-p}$ gives $R\asymp\alpha^{-1}(\log\tfrac1\alpha)^{-p}$
up to lower-order logarithmic corrections, hence
$\Theta(\alpha)\asymp(\log\tfrac1\alpha)^{-p}$; balancing against $\sigma/\alpha$
gives the rate $(\log\tfrac1\sigma)^{-p}$, with $\log(1/\alpha)\asymp\log(1/\sigma)$
at the balancing $\alpha$. The balancing radius here is
$R_\sigma\asymp\alpha_\sigma^{-1}(\log\tfrac1{\alpha_\sigma})^{-p}
\asymp\sigma^{-1}(\log\tfrac1\sigma)^{-2p}$, again the radius controlled
by \cref{ass:usc}, so the logarithmic case is covered on the same footing as the
power case.

The last claim is \cref{thm:balancing}, whose hypotheses are a non-increasing
bias bound $\kappa_M C_\psi\Theta(\alpha_{k-1})$ and a non-decreasing noise proxy.
\end{proof}

\begin{corollary}[Relaxation of the range condition; linearized iteration]\label{cor:norange}
Under \cref{ass:usc}, \cref{thm:vsc} gives a convergent rate in expectation for
the linearized iteration whenever the continuum initial error obeys the
approximate source condition $d_\infty(R)\le C\psi(R)$, and in particular for
$e_{0,\infty}\notin\mathcal{R}(A_\infty)$, at the cost of a rate slower than
$\sigma^{1/2}$ set by the decay of $\psi$. This is the precise sense in which the
range condition of \cite{MahaleSingh2022}, which excludes such $e_{0,\infty}$, is
relaxed. The rate is attained by the a~priori choice and, without knowledge of
$\psi$, by the balancing rule of \cref{thm:balancing}. We do not claim it for the
discrepancy principle (\cref{rem:discvsc}) or for the full nonlinear iteration in
this regime (\cref{sec:nonlinear}).
\end{corollary}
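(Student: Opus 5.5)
The corollary is a direct specialization of \cref{thm:vsc}, so the plan is to check that its hypotheses are met and then read off the conclusions. No new estimate should be needed.

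First, the link from the continuum to the grid. The corollary states the hypothesis as $d_\infty(R)\le C\psi(R)$ together with \cref{ass:usc}. Since \cref{ass:usc} is itself the discrete statement $d_n(R)\le C_\psi\psi(R)$ for $1\le R\le R_\sigma$, uniformly in $n$, the continuum bound serves only as motivation and as the criterion for which errors qualify. I would state explicitly that what \cref{thm:vsc} uses is \eqref{eq:usc}. By the discussion after \cref{ass:usc}, this follows from the continuum bound under the projection-stability estimate and the mesh-noise coupling $h^sR_\sigma\lesssim\psi(R_\sigma)$.

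Second, the case $e_{0,\infty}\notin\mathcal{R}(A_\infty)$. Here $d_\infty(R)>0$ for all $R$ but decays to $0$, as noted after \eqref{eq:distcont}. Such errors are therefore admissible, with $\psi$ strictly decreasing to $0$. \Cref{thm:vsc} then gives \eqref{eq:vscrate}. Balancing over the geometric schedule yields a rate $\Theta(\alpha_\sigma)+\sigma/\alpha_\sigma\to0$ as $\sigma\to0$, because $\Theta(0^+)=0$ and $\Theta$ is continuous and increasing. Choosing $\alpha_\sigma$ with $\alpha_\sigma\to0$ and $\sigma/\alpha_\sigma\to0$ gives convergence even without a power law.

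Third, the rate is slower than $\sigma^{1/2}$. I would compare with cases (ii) and (iii): $\sigma^{\theta/(2\theta+1)}$ has exponent $<1/2$ for every finite $\theta$, and the logarithmic rate is slower still. For the sharper claim I would argue as follows. If the rate were $\sigma^{1/2}$ uniformly, then by \cref{lem:biasconv} the bias would satisfy $b_k\lesssim\alpha_{k-1}$ along the schedule. This would give $d(\kappa_M\|e_0\|/\alpha)\lesssim\alpha$, that is, $d(R)\lesssim R^{-1}$, i.e.\ $\theta\ge1$-type decay. In the continuum limit that pushes toward the range, which is the necessity direction noted after \cref{lem:biasconv}. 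I expect this step to be the main obstacle, because necessity is only asserted in the continuum limit. I would therefore phrase ``slower'' as the rate set by $\psi$ rather than prove strict suboptimality.

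Finally, attainment. The a priori choice is part of \cref{thm:vsc}, and attainment without knowledge of $\psi$ is \cref{thm:balancing}. I would verify its hypotheses as in the last line of the proof of \cref{thm:vsc}: the bias bound $\kappa_M C_\psi\Theta(\alpha_{k-1})$ is non-increasing in $k$ since $\Theta$ is increasing and $\alpha_k$ decreasing, and the noise proxy $\kappa_M\sigma/\alpha_k$ is non-decreasing in $k$.
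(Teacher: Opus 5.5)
Your proposal is correct and follows the paper's route. The corollary has no separate proof: it is \cref{thm:vsc} together with \cref{thm:balancing} read off under \cref{ass:usc}, with $d_\infty\le C\psi$ serving only to identify the admissible errors and \emph{slower than $\sigma^{1/2}$} referring to the $\psi$-determined upper bound. (In the balancing step, take the noise proxy with the constant $\kappa\ge2\kappa_M$ that \cref{thm:balancing} requires, e.g.\ $2\kappa_M\sigma/\alpha_k$ rather than $\kappa_M\sigma/\alpha_k$; this changes only constants.)

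You are right to drop the converse detour, but not for the reason you give. \cref{lem:biasconv}, with its crude radius $\kappa_M\norm{e_0}/\alpha_{k-1}$, turns $b_k\lesssim\alpha_{k-1}$ only into $d(R)\lesssim R^{-1}$, that is $\theta=1$ or H\"older order $\nu=\tfrac12$, which does not force range membership even in the continuum. The clean justification of \emph{slower} for every index function is that $\psi$ is positive and decreasing, so $\Theta(\alpha)/\alpha\to\infty$, and hence $\min_{\alpha}\bigl[\Theta(\alpha)+\sigma/\alpha\bigr]/\sigma^{1/2}\to\infty$ as $\sigma\to0$.
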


\begin{remark}[The discrepancy principle under the approximate source condition]\label{rem:discvsc}
We do not claim a rate for the discrepancy principle under the approximate
source condition. \Cref{thm:discrepancy} relies on the deterministic theorem of
\cite{MahaleSingh2022}, which assumes the range condition, and \cref{thm:vsc}
does not give the stopping-index inequalities the discrepancy principle needs;
such rates are delicate even deterministically
\cite{HofmannKaltenbacherResmerita2016}. The experiments of \cref{sec:vscnum}
use the discrepancy rule anyway, as numerical evidence only.
\end{remark}

\begin{remark}[Equivalence to a benchmark VSC]\label{rem:vsceq}
In the energy inner product, in which $A$ is self-adjoint, the approximate
source condition $d^E(R)\le\psi(R)$ with
$d^E(R)=\inf_{\norm{v}_E\le R}\norm{e_0-Av}_E$ is equivalent, by the duality
between distance functions and index functions
\cite{HofmannMathe2007,FlemmingHofmannMathe2011}, to a benchmark variational
source condition: there exist $\beta\in[0,1)$ and a concave index $\varphi$ with
\[
  \inner{\xdag-x_0}{\xdag-x}_{E}\le\beta\norm{x-\xdag}_{E}^2
  +\varphi\bigl(\norm{A(x-\xdag)}_E^2\bigr)\quad\forall x\in B_r^{\infty}(\xdag),
\]
$\varphi$ and $\psi$ related by the Legendre--Fenchel transform. We use the
$L^2$ form $d_n$ of \eqref{eq:distance} instead, because it is what the spectrum
gives and what enters the variance bound; since \eqref{eq:normcomp} is uniform in
one direction only, we do not claim a mesh-uniform equivalence of the two.
\end{remark}

\subsection{Which initial errors are admissible, and a numerical test}
\label{sec:vscnum}

Let $\{(\lambda_i,u_i)\}$ be the eigenpairs of $B_\infty$ in $L^2(\Om)$, with
$\lambda_i\asymp i^{-1}$ (\cref{lem:weyl}), and write
$c_i=\inner{M^{1/2}e_{0,\infty}}{u_i}$. Then
$e_{0,\infty}\in\mathcal{R}(A_\infty^{\nu})$ if and only if
$\sum_i c_i^2\lambda_i^{-2\nu}<\infty$, so the admissible H\"older order is
governed by the decay of $c_i$. A smooth error has rapidly decaying $c_i$ and
lies in $\mathcal{R}(A_\infty)$; an error with a kink, a jump, or a
non-vanishing boundary trace does not, yet still has $d_\infty(R)\to0$ and hence
a rate through \cref{thm:vsc}.

We test this on the grid ($N=49$) with $g(s)=s+s^3$, the bounded-away source
$\xdag=1+\tfrac12\sin(\pi\xi_1)\sin(\pi\xi_2)$, for which $g'(\xdag)\in[4,7.75]$ and
$\kappa_M\approx1.39$, and three continuum errors of increasing roughness: a
smooth bump, a tent (kink), and a step (jump), each scaled to $\norm{e_0}=0.05$.
For each we compute, in the eigenbasis of the symmetric matrix $B$ of
\eqref{eq:similar}, the distance function of the symmetrized problem,
$\inf_{\norm{w}\le R}\norm{M^{1/2}e_0-Bw}$, which agrees with $d_n$ up to factors
$\kappa_M$ in $R$ and in the distance and so has the same decay exponent. We fit
$R^{-\theta}$ on the large-$R$ part of the sampled curve (the upper $55\%$ of the
log-radii, where the distance still exceeds $10^{-6}$ of its maximum), predict
the rate $\sigma^{\theta/(2\theta+1)}$ of \cref{thm:vsc}(ii), and measure the rate
of the full nonlinear iteration with the discrepancy rule. The experiment goes
beyond the proven setting (the discrepancy rule and the nonlinear iteration,
\cref{rem:discvsc,sec:nonlinear}), so it tests the prediction rather than
verifying the theorem. \Cref{fig:vsc} and \cref{tab:vsc} report the outcome. The
smooth error's $d_n$ collapses early ($\theta\approx360$) and its rate is close
to $\tfrac12$. The kink and the jump, which come from continuum errors outside
$\mathcal{R}(A_\infty)$, both converge more slowly, in the order the
distance-function decay predicts. The prediction is accurate for the kink and
optimistic for the jump, whose roughness a finite grid underestimates. The
classical theory predicts no rate for either.

\begin{table}[ht]
\centering
\caption{Distance-function decay and rates for continuum initial errors of
increasing roughness, projected to an $N=49$ grid ($M=28$ realizations). The
membership column refers to the continuum operator $A_\infty$; $\theta$ is fitted
on the large-$R$ part of the computed distance curve (\cref{sec:vscnum}).}
\label{tab:vsc}
\begin{tabular}{lcccc}
\toprule
$e_{0,\infty}$ & $\in\mathcal{R}(A_\infty)$? & $\theta$ (decay of $d_n$) &
predicted $\tfrac{\theta}{2\theta+1}$ & observed $\hat p$\\
\midrule
smooth bump & yes & $361$  & $0.499$ & $0.461$\\
tent (kink) & no  & $6.66$ & $0.465$ & $0.435$\\
step (jump) & no  & $1.90$ & $0.396$ & $0.252$\\
\bottomrule
\end{tabular}
\end{table}

\begin{figure}[ht]
\centering
\includegraphics[width=\textwidth]{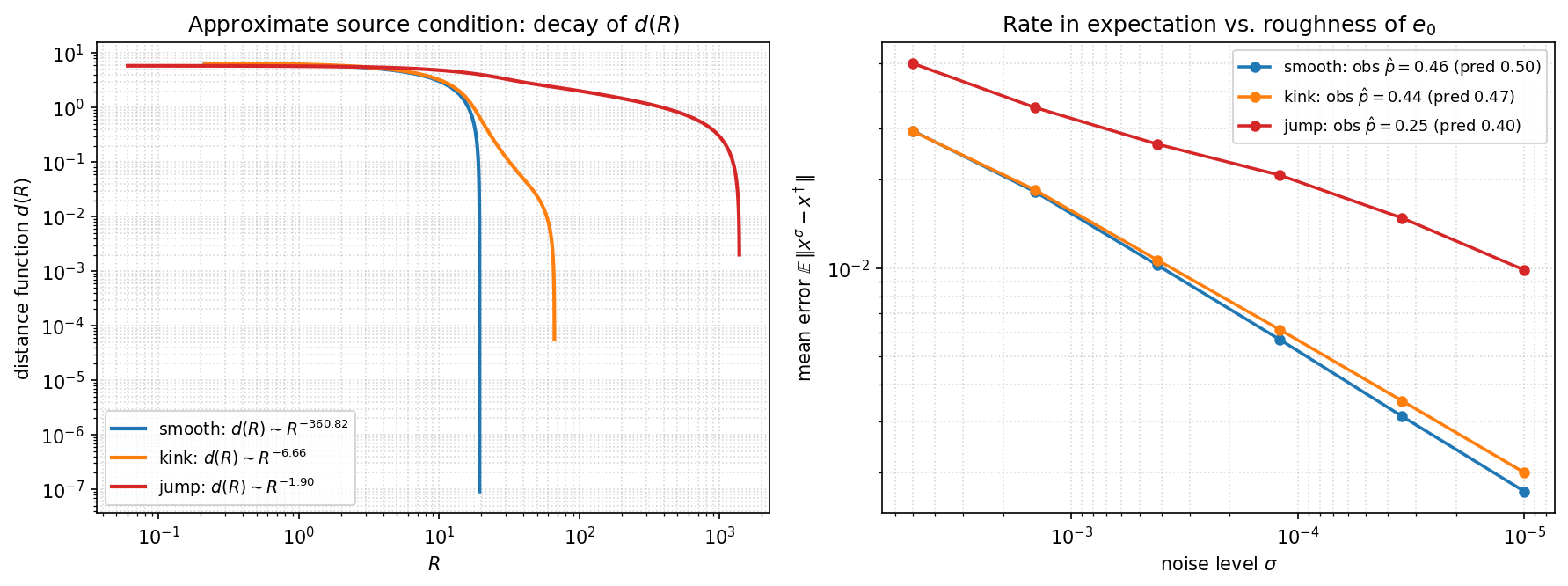}
\caption{Relaxing the range condition. Left: the computed distance function
(\cref{sec:vscnum}) decays fast for a smooth continuum error
(range element) and only polynomially for the kink and jump (continuum errors
outside $\mathcal{R}(A_\infty)$). Right: all three converge in expectation, in the
order set by the distance-function decay; for the smooth and kink errors the rate
is close to the value predicted through \cref{thm:vsc}.}
\label{fig:vsc}
\end{figure}

\subsection{The full nonlinear iteration}\label{sec:nonlinear}

The bias theory above linearizes \eqref{eq:method} about $\xdag$
(\cref{rem:linearization}). We now drop the linearization and ask how much of the
rate theory survives the iterate-dependent operator $A^{\sigma}_k=F'(\xsk)$. Write
$e_k=\xsk-\xdag$ and let $L$ be the Lipschitz constant of $F'$
(\cref{prop:det}(b)).

\paragraph{Exact error recursion} Subtract $\xdag$ from \eqref{eq:method} and
expand $F(\xsk)-F(\xdag)=A_k e_k+R_k$ with the Taylor remainder
$R_k:=F(\xsk)-F(\xdag)-F'(\xsk)(\xsk-\xdag)$. The cancellation of
\cref{lem:biasspec}, now carried out with $A_k$ in place of $A$, leaves the exact
identity
\begin{equation}\label{eq:nlrec}
  e_{k+1}=\underbrace{\alpha_k(A_k+\alpha_k)^{-1}e_0}_{\text{bias}}
  \;-\;\underbrace{(A_k+\alpha_k)^{-1}R_k}_{\text{remainder}}
  \;+\;\underbrace{(A_k+\alpha_k)^{-1}e}_{\text{noise}} .
\end{equation}
The linear part still cancels. What remains is that the bias is filtered by the
iterate-dependent $A_k$, and that the remainder $R_k$ couples the steps.

\paragraph{Term bounds} Since $A_k$ is self-adjoint and positive semidefinite
in the energy inner product, its resolvent satisfies
$\norm{(A_k+\alpha_k)^{-1}}_E\le\alpha_k^{-1}$. To
work in the $L^2$ norm we use \cref{ass:M} on the ball: for $r$ small enough
that $g'(x)\ge m_-/2$ on $B_r^{\infty}(\xdag)$, the similarity \eqref{eq:similar}
with $M_{g'(\xsk)}$ in place of $M$ gives
$\norm{(A_k+\alpha_k)^{-1}}\le\kappa'\alpha_k^{-1}$ with a constant $\kappa'$
depending only on $m_-$ and $\rho$. So the noise term obeys
$\norm{(A_k+\alpha_k)^{-1}e}\le\kappa'\norm{e}/\alpha_k$. This deterministic
operator bound sidesteps the correlation between $A_k$ and $e$. The remainder
and the operator perturbation are controlled by the $L^\infty$ Lipschitz bound
of \cref{prop:det}(b), so each carries an $L^\infty$ factor:
\[
\norm{R_k}\le\tfrac{L}{2}\norm{e_k}\,\norm{e_k}_\infty,\qquad
\norm{(A_k+\alpha_k)^{-1}R_k}\le\tfrac{\kappa'L}{2\alpha_k}\norm{e_k}\,\norm{e_k}_\infty ;
\]
and $\norm{A_k-A}\le L\norm{e_k}_\infty$, so the shifted distance function obeys
$d_k(R):=\inf_{\norm{v}\le R}\norm{e_0-A_k v}\le d(R)+L\norm{e_k}_\infty R$. This
gives the bias bound
\begin{equation}\label{eq:nlbias}
  \norm{\alpha_k(A_k+\alpha_k)^{-1}e_0}\le\kappa'\inf_{R>0}\bigl[\alpha_k R+d_k(R)\bigr]
  \le\kappa'\,\Theta\bigl(\alpha_k+L\norm{e_k}_\infty\bigr) .
\end{equation}
Collecting, and recalling $\norm{e_k}_\infty< r$ on the ball, the pathwise
recursion is
\begin{equation}\label{eq:nlmaster}
  \norm{e_{k+1}}\le\kappa'\Bigl[\Theta\bigl(\alpha_k+L\norm{e_k}_\infty\bigr)
  +\frac{\norm{e}}{\alpha_k}+\frac{L}{2\alpha_k}\norm{e_k}\,\norm{e_k}_\infty\Bigr] .
\end{equation}
The $L^\infty$ factors are the price of the nonlinearity living in the sup norm
while the rate lives in $L^2$. The next two paragraphs turn on whether they can
be made small.

\paragraph{Closure in the classical regime} When the source condition is of
order-one (range) type, that is, $\Theta(\alpha)\le c_0\alpha$, corresponding
to $\nu=1$ in \cref{thm:vsc}(i), the recursion \eqref{eq:nlmaster} closes.

\begin{proposition}[Nonlinear rate, order-one source condition]\label{prop:nonlinear}
Assume \cref{prop:det}, \cref{ass:M}, the order-one source condition
\eqref{eq:source} (so that $\Theta(\alpha)\le c_0\alpha$), and that
$L(c_0+\norm{A^{-1}e_0})$ is small enough. Let $\hat x$ be the truncated
estimator of \cref{thm:discrepancy}, run with threshold $\tau\sigma_E$. Then
\begin{equation}\label{eq:nlrate}
  \bigl(\E\norm{\hat x-\xdag}^2\bigr)^{1/2}\le C\,\sigma_E^{1/2}
  +R\,e^{-\frac14(\beta-1)^2\rho(S)},
\end{equation}
with $\sigma_E=\sigma\,\overline\lambda(S)^{1/2}\asymp\sigma/h$ and $R,\rho(S)$ as
in \cref{thm:discrepancy}. This is the rate exponent $\tfrac12$; the
general-exponent nonlinear rate is not established and is discussed in
\cref{rem:nlopen}.
\end{proposition}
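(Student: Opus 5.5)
The plan is to reproduce the structure of the proof of \cref{thm:discrepancy}: a high-probability good event, the deterministic argument on that event, and truncation off it. The difference is that on the good event the deterministic rate will be derived from the pathwise recursion \eqref{eq:nlmaster} in the classical regime, rather than quoted.

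Let $\mathcal{E}=\{\norm{e}_E\le\beta\sigma_E\}$. By \eqref{eq:conce},
\[
  \Prob(\mathcal{E}^c)\le e^{-\frac12(\beta-1)^2\rho(S)} .
\]
On $\mathcal{E}^c$ the truncation gives $\norm{\hat x-\xdag}\le R=2\varrho$ deterministically. Splitting the second moment over $\mathcal{E}$ and $\mathcal{E}^c$ and taking square roots as in that proof then produces the tail term in \eqref{eq:nlrate}. Everything therefore reduces to showing $\norm{\hat x-\xdag}\le C\sigma_E^{1/2}$ pathwise on $\mathcal{E}$.

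On $\mathcal{E}$, set $\delta_E:=\beta\sigma_E$. Then the data error obeys the deterministic bound $\norm{y-\ys}_E\le\delta_E$. The stopping threshold is $\tau\sigma_E=\tau'\delta_E$ with $\tau'=\tau/\beta>1$. I would first prove, by induction on $k$ up to the stopping index, that the iterates stay in $B_r^\infty(\xdag)$ and satisfy an error bound of the form
\[
  \norm{e_k}_E\le c\bigl(\alpha_{k-1}+\delta_E/\alpha_{k-1}\bigr).
\]
This uses the energy-norm version of \eqref{eq:nlrec} and \eqref{eq:nlmaster}, in which the resolvent bound holds with constant $1$. Under the order-one condition the three terms are handled as follows.
\begin{itemize}
\item The bias term is at most $\Theta(\alpha_k+L\norm{e_k}_\infty)\le c_0(\alpha_k+L\norm{e_k}_\infty)$.
\item The noise term is at most $\delta_E/\alpha_k$.
\item The remainder is at most $\frac{L}{2\alpha_k}\norm{e_k}\norm{e_k}_\infty$.
\end{itemize}
The smallness of $L(c_0+\norm{A^{-1}e_0})$ is what absorbs both the $L\norm{e_k}_\infty$ shift and the quadratic remainder into the leading terms. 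The point is that $\norm{e_k}_\infty$ must be of order $\alpha_k$ for this absorption to work. I expect this to be the main obstacle, since \eqref{eq:nlmaster} controls only $L^2$ or energy norms while the nonlinear terms need sup-norm control.

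For that step I would not try to bound $\norm{e_k}_\infty$ from $\norm{e_k}_E$, since no mesh-uniform embedding is available. Instead I would use the smallness conditions of \cite{MahaleSingh2022} at level $\delta_E$, which guarantee confinement in $B_r^\infty(\xdag)$. I would also use the range form $e_0=Av$, which gives the representation $\alpha_k(A_k+\alpha_k)^{-1}Av$. If a self-contained sup-norm bound proves elusive, the fallback is to quote \cite[Thm.~3.5]{MahaleSingh2022} in $(\R^n,\inner{\cdot}{\cdot}_E)$ exactly as in \cref{thm:discrepancy}. That theorem already covers the order-one case. The role of the present argument is then to identify the hypothesis ``$L(c_0+\norm{A^{-1}e_0})$ small'' with those smallness conditions.

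Next I would analyse the stopping index $k_\sigma$. The first step is to show that the discrepancy residual $\alpha_k(A_k+\alpha_k)^{-1}(F(\xsk)-\ys)$ is comparable to $\alpha_k\norm{e_{k+1}}$ up to $\delta_E$ and higher-order terms, by the same resolvent identities. Two consequences follow.
\begin{itemize}
\item Before stopping, the residual exceeds $\tau'\delta_E$. Combined with the bias bound $c_0\alpha_k^2$, this forces $\alpha_{k_\sigma-1}\gtrsim\delta_E^{1/2}$, so the noise term $\delta_E/\alpha_{k_\sigma-1}$ is $\lesssim\delta_E^{1/2}$.
\item At stopping, the residual is at most $\tau'\delta_E$. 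Through the moment (interpolation) inequality $\norm{e_0}$-weighted, namely $\norm{\alpha(A+\alpha)^{-1}Av}\le\norm{\alpha(A+\alpha)^{-1}A^2 v}^{1/2}\norm{v}^{1/2}$-type bounds, this gives a bias term $\lesssim\delta_E^{1/2}$.
\end{itemize}
Together these yield
\[
  \norm{\xsk[k_\sigma]-\xdag}_E\le C\delta_E^{1/2}.
\]
Finiteness of $k_\sigma\le k_{\max}$ follows because $\alpha_k\to0$. Confinement shows that the fallback is not triggered. Finally, \eqref{eq:normcomp} transfers the energy-norm bound to the $L^2$ norm with a constant independent of $n$, giving $C\sigma_E^{1/2}$ with $C$ absorbing $c_0^{-1/2}\beta^{1/2}$.
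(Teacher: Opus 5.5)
Your proposal follows essentially the same route as the paper's sketch. Both use the energy-concentration event $\mathcal{E}=\{\norm{e}_E\le\beta\sigma_E\}$, close \eqref{eq:nlmaster} on $\mathcal{E}$ under the order-one condition with sup-norm confinement taken from \cite[\S4]{MahaleSingh2022} (the paper does not prove this independently either), use the truncation bound $R$ on $\mathcal{E}^c$, and split the second moment to produce $R\,\Prob(\mathcal{E}^c)^{1/2}$.

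Your stopping-index step is more explicit than the paper's one-line claim that bias and noise balance at the stop, with one correction. In the linearized picture the residual's bias part carries two resolvent factors, $\alpha_k\alpha_{k-1}(A+\alpha_kI)^{-1}(A+\alpha_{k-1}I)^{-1}A^2v$. The inequality you need is therefore $\norm{r(A)Av}^2\le\norm{r(A)^2A^2v}\,\norm{v}$ in the energy inner product, with $r(A)=\alpha(A+\alpha I)^{-1}$, rather than the one-factor version you wrote.
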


\begin{proof}[(sketch)]
On the event $\mathcal{E}=\{\norm{e}_E\le\beta\sigma_E\}$ the deterministic noise
bound $\norm{y-\ys}_E\le\beta\sigma_E$ holds, and we run the deterministic
confinement of \cite[\S4]{MahaleSingh2022} in the energy norm. Let $q:=\ell^{-1}$.
Run the induction on $\norm{e_k}_\infty$: suppose $\norm{e_k}_\infty\le c_1\alpha_{k-1}$,
so also $\norm{e_k}\le\norm{e_k}_\infty\le c_1\alpha_{k-1}$ since $\lvert\Om\rvert=1$.
We bound the three terms of \eqref{eq:nlmaster}, now with $\norm{e}$ replaced by
the energy-controlled noise on $\mathcal{E}$.

Bias term. $L\norm{e_k}_\infty\le Lc_1\alpha_{k-1}\le\alpha_k$ once $Lc_1\le q$,
so $\Theta(\alpha_k+L\norm{e_k}_\infty)\le\Theta(2\alpha_k)\le2c_0\alpha_k$.

Remainder term. Using both bounds on $e_k$ and $\alpha_{k-1}\le\alpha_k/q$,
$\tfrac{L}{2\alpha_k}\norm{e_k}\norm{e_k}_\infty\le\tfrac{Lc_1^2}{2q^2}\alpha_k$.

Noise term. On $\mathcal{E}$, the propagated noise is bounded through the energy
resolvent by $\kappa'\norm{e}_E/\alpha_k\le\kappa'\beta\sigma_E/\alpha_k$. Before
the discrepancy stop $\alpha_k\ge\alpha_{k_\sigma}\asymp\sigma_E^{1/2}$, so this
is $\le c\beta\alpha_k$ for a constant $c$.

Collecting, $\norm{e_{k+1}}\le(2c_0+Lc_1^2/(2q^2)+c\beta)\alpha_k\le c_1\alpha_{k+1}$
once $c_1$ is large compared with $c_0/q$, $L$ is small, and $\beta$ is small
compared with $c_1q$. Confinement in the sup norm is the machinery of
\cite[\S4]{MahaleSingh2022}, which in this regime keeps $\norm{e_k}_\infty$ at the
scale of $\norm{e_k}$. At the stop, bias and noise balance at $\sigma_E^{1/2}$, so
$\norm{\hat x-\xdag}\le C\sigma_E^{1/2}$ on $\mathcal{E}$.

On $\mathcal{E}^c$ we do not use confinement, which is exactly what fails there;
we use the truncation of \cref{thm:discrepancy}, giving $\norm{\hat x-\xdag}\le R$.
Splitting the second moment as in that proof,
$\E\norm{\hat x-\xdag}^2\le C^2\sigma_E+R^2\Prob(\mathcal{E}^c)$, and
$\Prob(\mathcal{E}^c)\le e^{-\frac12(\beta-1)^2\rho(S)}$ by \eqref{eq:conce}.
Taking square roots gives \eqref{eq:nlrate}, the tail entering as
$R\,\Prob(\mathcal{E}^c)^{1/2}=R\,e^{-\frac14(\beta-1)^2\rho(S)}$.
\end{proof}

\paragraph{The rough regime: a real obstruction} For a proper approximate source
condition, $\Theta(\alpha)/\alpha\to\infty$ as $\alpha\to0$. This is the regime
\cref{thm:vsc} is built for, and here the closure fails. The bias inflation
\eqref{eq:nlbias} is harmless only while $L\norm{e_k}_\infty\lesssim\alpha_k$,
whereas the rate forces $\norm{e_k}\sim\Theta(\alpha_{k-1})\gg\alpha_{k-1}$, so
the two are incompatible already in $L^2$. The $L^\infty/L^2$ asymmetry makes it
worse, since the inflation is driven by $\norm{e_k}_\infty$, which in two
dimensions is not controlled by the $L^2$ rate. The deterministic confinement of
the classical regime is therefore not available, the operator-linearization error
$\norm{A_k-A}=O(L\norm{e_k}_\infty)$ is no longer of higher order, and
\eqref{eq:nlmaster} does not close. For rough initial errors, \cref{thm:vsc}
remains a statement about the linearization.

\begin{remark}[Status of the rough nonlinear case]\label{rem:nlopen}
This is an obstruction of the technique, not evidence that the method fails:
the experiments of \cref{sec:vscnum} use the full nonlinear iteration and
converge for the kink and jump errors (\cref{tab:vsc}). A proof would presumably
need a range-type nonlinearity condition that controls $F'(\cdot)-F'(\xdag)$ in
the data norm rather than through $\norm{e_k}$; this is open.
\end{remark}

\section{Optimality and numerical validation}\label{sec:validation}

\subsection{Optimality and relation to known rates}\label{sec:optimality}

Whether the rate $\sigma^{\nu/(\nu+1)}$ is optimal depends on the noise model.
We separate three settings. (i)
Deterministic worst-case noise of norm $\le\delta$. (ii) The Gaussian
finite-energy model $\E\norm{e}^2=\sigma^2$ used in this paper. (iii) The Gaussian
white-noise model with variance $\sigma^2$ per mode. The method is order-optimal in
(i), away from the parametric floor. In (ii) it is the natural estimator but is
statistically inefficient relative to spectral cut-off, as we show by comparison
rather than by a minimax theorem. In (iii) it does not attain the minimax rate.

\paragraph{(i) Deterministic worst-case noise} Since $\norm{e}$ concentrates
about $\sigma$ (\cref{lem:conc}), a typical realization is a perturbation of
norm close to $\sigma$, and the natural benchmark is the classical worst-case
lower bound under a source condition \cite{EnglHankeNeubauer1996}, which we prove
for the present operator.

\begin{proposition}[Worst-case lower bound]\label{prop:lower}
Let \cref{ass:M} hold. Linearize about $\xdag$, and let
$\mathcal{M}_{\nu,R}=\{x:x-x_0=A^{\nu}v,\ \norm{v}\le R\}$. Let $c_\pm$ be the
constants of \cref{lem:weyl}, set $c_\ast:=2c_+/c_-$ and
$\lambda^\ast=(\delta/R)^{1/(\nu+1)}$, and suppose $c_\ast\lambda_n\le\lambda^\ast<c_+$,
which holds for $R\,(c_\ast\lambda_n)^{\nu+1}\le\delta<R\,c_+^{\nu+1}$, a range whose
lower end is $\asymp R\,n^{-(\nu+1)}$.
Then for data perturbations of $L^2$ norm $\le\delta$, every reconstruction
$\mathcal{R}$ obeys
\begin{equation}\label{eq:lower}
  \sup_{x\in\mathcal{M}_{\nu,R},\ \norm{e}\le\delta}\norm{\mathcal{R}(F(x)+e)-x}
  \;\ge\;c_\ast^{-\nu}\,\delta^{\nu/(\nu+1)}R^{1/(\nu+1)}.
\end{equation}
\end{proposition}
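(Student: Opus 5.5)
The plan is the classical two-point (modulus of continuity) argument, adapted to the linearized operator $A=KM$ through the similarity \eqref{eq:similar}. After linearization the data map is $x\mapsto A(x-x_0)+\text{const}$, so the lower bound reduces to bounding the worst-case modulus
\[
\omega(\delta):=\sup\{\norm{w}: w=A^\nu v,\ \norm{v}\le R,\ \norm{Aw}\le\delta\}.
\]
The standard argument uses the two admissible pairs $(x_0+w,\,e=-Aw/2)$ and $(x_0-w,\,e=+Aw/2)$. Both produce the same data, with $\norm{e}\le\delta/2$. Whatever $\mathcal R$ returns on that common datum, it is at distance at least $\norm{w}$ from one of $x_0\pm w$. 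Hence the left side of \eqref{eq:lower} is at least $\norm{w}$ for every $w$ with $\norm{Aw}\le 2\delta$, and in particular for every $w$ with $\norm{Aw}\le\delta$. It therefore suffices to exhibit one $w$ in the source set with $\norm{Aw}\le\delta$ and $\norm{w}\ge c_\ast^{-\nu}\delta^{\nu/(\nu+1)}R^{1/(\nu+1)}$.

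For the construction, I work with the symmetric matrix $B$ and its $L^2$-orthonormal eigenvectors $u_i$, with eigenvalues $\lambda_i\asymp i^{-1}$ by \cref{lem:weyl}. I take a single mode $v=R\,M^{-1/2}u_j/\norm{M^{-1/2}u_j}$, or more cleanly define $w$ through $M^{1/2}w\propto u_j$. Then $A^\nu w$ and $Aw$ act on $u_j$ by the scalars $\lambda_j^\nu$ and $\lambda_j$, and the conversion between $\norm{\cdot}$ and the weighted norm costs only factors of $m_\pm$. I will have to track these factors honestly, and possibly absorb them into $c_\ast$. The index $j$ is chosen so that $\lambda_j$ lies in $[\lambda^\ast/c_\ast,\lambda^\ast]$. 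This window has ratio $c_\ast=2c_+/c_-$. Since $c_-/i\le\lambda_i\le c_+/i$, consecutive eigenvalues satisfy $\lambda_{i}/\lambda_{i+1}\le c_+(i+1)/(c_- i)\le 2c_+/c_-$, so the window contains an eigenvalue. The hypothesis $c_\ast\lambda_n\le\lambda^\ast<c_+$ guarantees that the window meets $[\lambda_n,\lambda_1]$, which is exactly the stated $\delta$ range.

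With $w=\lambda_j^\nu R\,\tilde u_j$, where $\tilde u_j$ is the normalized mode, one gets $\norm{Aw}\approx\lambda_j^{\nu+1}R\le(\lambda^\ast)^{\nu+1}R=\delta$ and $\norm{w}\approx\lambda_j^\nu R\ge c_\ast^{-\nu}(\lambda^\ast)^\nu R=c_\ast^{-\nu}\delta^{\nu/(\nu+1)}R^{1/(\nu+1)}$, which is \eqref{eq:lower}.

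The main obstacle is the non-symmetry of $A$ in the $L^2$ norm. Eigenvectors of $A$ are $M^{-1/2}u_j$, which are not normalized in $\norm{\cdot}$, and the norms of $v$, $w$ and $Aw$ are each distorted by $\norm{M^{\pm1/2}}$. To get the exact constant $c_\ast^{-\nu}$ rather than something involving $\kappa_M$, I would normalize in the $M$-weighted norm $\norm{M^{1/2}\cdot}$. If the statement's constant does not survive that, I would accept a $\kappa_M$-dependent constant and note that it is absorbed in the $\asymp$ constants.
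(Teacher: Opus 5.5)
Your strategy is the paper's: a two-point construction along a single eigenmode of $A$ whose eigenvalue lies in $[\lambda^\ast/c_\ast,\lambda^\ast]$. The one genuinely different step is how you locate that eigenvalue. The paper counts indices using both Weyl bounds. You instead note that $\lambda_i/\lambda_{i+1}\le (c_+/c_-)(i+1)/i\le 2c_+/c_-=c_\ast$, so a closed window of ratio $c_\ast$ that meets $[\lambda_n,\lambda_1]$ must contain an eigenvalue. This is correct and shorter. The paper's remark that consecutive ratios need not stay below $2$ is no obstruction, since you only need them below $c_\ast$. Your route also relaxes the lower hypothesis to $\lambda^\ast\ge\lambda_n$. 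Do check the upper end explicitly: $\lambda^\ast<c_+$ gives $\lambda^\ast/c_\ast<c_-/2<c_-\le\lambda_1$.

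Two points need fixing. \emph{First}, the pair $(x_0+w,-Aw/2)$, $(x_0-w,+Aw/2)$ does not produce the same data: the data are $F(x_0)\pm Aw/2$, which differ by $Aw$. The symmetric construction needs $e_\pm=\mp Aw$. This gives the common datum $F(x_0)$ and is admissible when $\norm{Aw}\le\delta$. Your intermediate claim for all $w$ with $\norm{Aw}\le2\delta$ therefore does not follow, and is false in general: already in one dimension the reconstruction $x_0+y/\lambda$ has worst-case error $\delta/\lambda<\omega(2\delta)$. Since you only use the case $\norm{Aw}\le\delta$, your conclusion survives once the pair is corrected.

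\emph{Second}, the worry about non-symmetry is unfounded. Falling back to a $\kappa_M$-dependent constant would prove a weaker statement than the one posed, since the constant $c_\ast^{-\nu}$ is explicit and free of $\kappa_M$. Take $u:=M^{-1/2}u_j/\norm{M^{-1/2}u_j}$, an eigenvector of $A$ normalized in the discrete $L^2$ norm. By \eqref{eq:similar}, $A^\nu u=\lambda_j^\nu u$. Hence $v=Ru$, $w=A^\nu v=\lambda_j^\nu Ru$ and $Aw=\lambda_j^{\nu+1}Ru$ are all multiples of the same unit vector. So $\norm{v}=R$, $\norm{w}=\lambda_j^\nu R$ and $\norm{Aw}=\lambda_j^{\nu+1}R$ hold exactly, with no factors of $\norm{M^{\pm1/2}}$. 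This is what the paper does, and it gives the constant $c_\ast^{-\nu}$ exactly. Your first suggested choice of $v$ was already this one, so your $\approx$ signs should simply be equalities.
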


\begin{proof}
The argument is the classical two-point construction. We build two admissible
sources that are far apart but produce the same data after an admissible
perturbation, so no method can tell them apart.

We first show that $[\lambda^\ast/c_\ast,\lambda^\ast]$ contains an eigenvalue.
The bounds $c_-/i\le\lambda_i\le c_+/i$ of \cref{lem:weyl} are not enough to keep
consecutive ratios below $2$ (on the grids used here $\lambda_1/\lambda_2\approx2.8$),
so we count. Every index $i\le c_-c_\ast/\lambda^\ast$ has
$\lambda_i\ge\lambda^\ast/c_\ast$, and these indices are admissible because
$c_-c_\ast/\lambda^\ast\le c_-/\lambda_n\le n$; so at least
$\lfloor 2c_+/\lambda^\ast\rfloor\ge 2c_+/\lambda^\ast-1$ eigenvalues are
$\ge\lambda^\ast/c_\ast$. On the other hand $\lambda_i>\lambda^\ast$ forces
$i<c_+/\lambda^\ast$, so fewer than $c_+/\lambda^\ast$ eigenvalues exceed
$\lambda^\ast$. Since $c_+/\lambda^\ast>1$, the difference is positive, and some
eigenvalue $\lambda$ lies in $[\lambda^\ast/c_\ast,\lambda^\ast]$. The
construction fails once $\delta$ is so small that $\lambda^\ast<c_\ast\lambda_n$. Pick the corresponding normalized eigenmode $u$, $Au=\lambda u$,
$\norm{u}=1$, and set
\[
  x_\pm:=x_0\pm\lambda^{\nu}R\,u .
\]
Since $A^{\nu}u=\lambda^{\nu}u$, we have $x_\pm-x_0=A^{\nu}(\pm Ru)$ with
$\norm{\pm Ru}=R$, so both $x_+$ and $x_-$ lie in $\mathcal{M}_{\nu,R}$.

Now compute the data. In the linearization about $\xdag$, $F(x)=F(x_0)+A(x-x_0)$,
so $F(x_\pm)=F(x_0)\pm\lambda^{1+\nu}R\,u$. Choose $e_\pm:=\mp\lambda^{1+\nu}R\,u$,
with $L^2$ norm $\norm{e_\pm}=\lambda^{1+\nu}R\le(\lambda^\ast)^{1+\nu}R=\delta$
since $\norm{u}=1$, admissible for the $L^2$ constraint. Then
$F(x_+)+e_+=F(x_0)=F(x_-)+e_-$: the two experiments produce identical data.

Any reconstruction $\mathcal{R}$ maps these identical data to a single element
$z:=\mathcal{R}(F(x_0))$. By the triangle inequality,
$\norm{x_+-x_-}\le\norm{z-x_+}+\norm{z-x_-}$, so at least one of the two errors
is at least $\tfrac12\norm{x_+-x_-}$. Therefore the worst-case error of
$\mathcal{R}$ over $\mathcal{M}_{\nu,R}$ is at least
\[
  \tfrac12\norm{x_+-x_-}=\lambda^{\nu}R\ge c_\ast^{-\nu}(\lambda^\ast)^{\nu}R
  =c_\ast^{-\nu}\Bigl(\frac{\delta}{R}\Bigr)^{\nu/(\nu+1)}R
  =c_\ast^{-\nu}\,\delta^{\nu/(\nu+1)}R^{1/(\nu+1)},
\]
using $\lambda\ge\lambda^\ast/c_\ast$ in the inequality and the definition of
$\lambda^\ast$ in the last two equalities. This is \eqref{eq:lower}.
\end{proof}

This $L^2$ bound matches the linearized rate of \cref{thm:apriori} with
$\delta=\sigma$, both scaling as $\sigma^{\nu/(\nu+1)}$, as long as
$\sigma\gtrsim R\lambda_n^{\nu+1}$. Below that level the construction fails
because $\lambda^\ast<c_\ast\lambda_n$, and on a fixed grid an optimal estimator
eventually reaches a parametric rate of order $\sigma$. So the linearized
a~priori estimator, and the approximate-source rates of \cref{sec:vsc} for their
distance functions, are order-optimal against worst-case $L^2$ noise. The
nonlinear rate of \cref{thm:discrepancy} is stated in $\sigma_E$ and would need an
energy-norm version of the construction, which we do not carry out. In either
norm, \cref{prop:lower} concerns adversarial noise and says nothing about minimax
optimality for Gaussian noise.

\paragraph{(ii) and (iii) Gaussian noise: statistical inefficiency} The resolvent
$(A+\alpha)^{-1}$ damps every mode instead of discarding the unresolved ones, so
the propagated-noise variance is tied to $\sigma^2/\alpha^2$
(\cref{app:sharpvar}), whereas a spectral cut-off estimator has variance set by
the effective dimension \eqref{eq:neff}. We do not prove a minimax theorem for
the finite-energy model; we compare with cut-off instead, in the eigenbasis of
the symmetrized operator $B$ of \eqref{eq:similar} for the problem of
\cref{sec:vscnum}, with isotropic noise, $\sigma=10^{-3}$, and one fixed truth
$x-x_0=Av$ discretized on every mesh. Refining the mesh from $n=225$ to $n=2809$
leaves the oracle Lavrentiev error essentially unchanged, while the oracle
cut-off error falls, so their ratio grows: from $21$ to $51$ for a smooth $v$,
from $13$ to $36$ for a $v$ with a kink, and from $4.5$ to $8.7$ for a $v$ with a
jump (scripts \texttt{experiment7\_fixed.py} and \texttt{experiment7\_rough.py}).
The size of the gap depends on the truth; its growth with $n$ does not. So the
method is statistically inefficient relative to cut-off in the regimes examined,
not proven minimax-suboptimal. These rates are meaningful while $\alpha_{k_\sigma}\gg\lambda_n$,
that is $\sigma^{1/2}\gg n^{-1}$ (\cref{rem:operating}), or under a mesh-noise
coupling. In the white-noise model, with variance $\sigma^2$ per mode, singular
values $\lambda_i\asymp i^{-\beta}$, and smoothness $\nu$, the minimax $L^2$ rate
is $\sigma^{2\beta\nu/(2\beta\nu+2\beta+1)}$
\cite{Cavalier2011,CavalierGolubevPicardTsybakov2002,Tsybakov2009}, that is
$\sigma^{2\nu/(2\nu+3)}$ for $\beta=1$; since the total noise energy there grows
with $n$, this exponent is not directly comparable with ours. The case for
iterated Lavrentiev is cost, not statistical efficiency: it is adjoint-free and
forms no normal equations, and it needs an order of magnitude fewer solver
applications in the comparison of the supplementary material. A
variance-aware parameter choice for an adjoint-free iteration, with a matching
Gaussian lower bound, is an open problem.

\subsection{Numerical experiments}\label{sec:numerics}

\paragraph{Set-up} We discretize $\Om=(0,1)^2$ by the five-point Laplacian on an
$N\times N$ interior grid ($n=N^2$). This gives the SPD matrix $S\approx\Lap$ and
the forward map $F(x)=S^{-1}x^3$. Each iteration of \eqref{eq:method} reduces to
one SPD solve $(\,\mathrm{diag}(g'(\xsk))+\alpha_k S)\,z=S r$, with no adjoint.
We take $N=49$, the geometric schedule $\alpha_k=(0.7)^k$ ($\ell=10/7$),
$\tau=1.4$, the exact source $\xdag(\xi)=\sin(\pi\xi_1)\sin(\pi\xi_2)$, and the
source-condition-consistent guess $x_0=\xdag+K(g'(\xdag))$, that is, $v\equiv1$
in \eqref{eq:source}. The noise is generated as in \eqref{eq:noisegen},
$e=\sigma\,\xi/(h\sqrt n)$ with $\xi\sim\mathcal N(0,I_n)$ drawn afresh for each
run, so that each realization is Gaussian with $\E\norm{e}^2=\sigma^2$.
Realizations are never rescaled, so the rules receive a fixed level and not the
realized norm. The level $\sigma$ takes six values from $10^{-2}$ to $10^{-5}$,
with $M=80$ realizations each. \Cref{fig:recon} shows a typical reconstruction.

\begin{figure}[ht]
\centering
\includegraphics[width=0.93\textwidth]{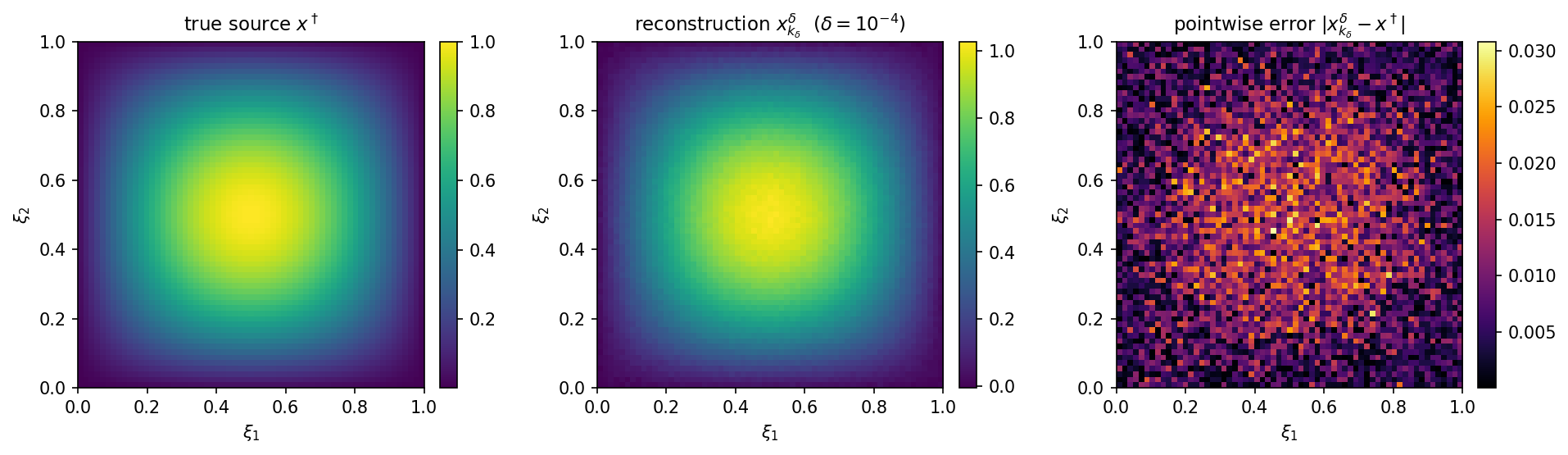}
\caption{The inverse source problem: true source $\xdag$, a reconstruction, and
the pointwise error at a representative noise level. This is the recovery that
the statistical analysis quantifies.}
\label{fig:recon}
\end{figure}

\paragraph{Stopping rules compared} For each realization we compute four
indices: the (infeasible) oracle $k=\arg\min_k\norm{\xsk-\xdag}$; the
discrepancy rule \eqref{eq:detstop} with $\delta=\sigma$; the balancing rule
\eqref{eq:balancing} with $\Phi(k)=\kappa\sigma\phi_k$ ($\kappa=0.3$, with
$\phi_k$ precomputed by probing the reference resolvent); and the parameter-free
quasi-optimality heuristic $k=\arg\min_k\norm{\xsk[k+1]-\xsk}$. Two of these are
practical variants rather than the rules analysed. The discrepancy rule is fed
$\delta=\sigma$, whereas \cref{thm:discrepancy} uses $\sigma_E$, which is
$100\,\sigma$ for $N=49$; and the balancing rule uses the calibrated value
$\kappa=0.3$, below the admissible $\kappa\ge2\kappa_M$ of \cref{thm:balancing}
(\cref{rem:kappa}). The experiment therefore illustrates these results rather
than verifying them; in addition, the source vanishes on the boundary, so
\cref{ass:M} holds only with a mesh-dependent constant.

\paragraph{Results} \Cref{tab:stat} and \cref{fig:rate} report the Monte Carlo
mean error of each rule and the exponent $\hat p$ fitted in
$\E\norm{\cdot}\sim\sigma^{\hat p}$. The discrepancy rule tracks the oracle
almost exactly ($\hat p=0.397$ against $0.384$; the means coincide to plotting
accuracy for $\sigma\le1.6\times10^{-4}$), and the truncation was never triggered
in $480$ runs. A run fed $\sigma_E$ itself stops much earlier and probes a
different part of the schedule; it is left to future work. The balancing rule
tracks the oracle within a constant ($\hat p=0.336$); its index increases
monotonically ($\bar k_{\mathrm{bal}}=0,1,6,9,12,14$ as $\sigma$ decreases),
following the bias-variance trade-off of \eqref{eq:oracle} shown in
\cref{fig:tradeoff}. Quasi-optimality stalls at an early iterate (error about $0.050$, against
$\norm{e_0}\approx0.055$) for most noise levels and drops only at the smallest
$\sigma$, a warning against parameter-free rules in this regime.

\begin{table}[ht]
\centering
\caption{Monte Carlo estimate of $\E\,\norm{\xsk[k_\sigma]-\xdag}$ over $M=80$
realizations ($N=49$). Bottom row: fitted exponent $\hat p$ in
$\E\norm{\cdot}\sim\sigma^{\hat p}$ with standard error. The discrepancy rule
matches the oracle; the balancing rule attains the rate within a constant;
quasi-optimality fails.}
\label{tab:stat}
\small
\begin{tabular}{lcccc}
\toprule
$\sigma$ & oracle & discrepancy ($\hat\delta{=}\sigma$) & balancing (Lepskii)
& quasi-optimality \\
\midrule
$1.00\times10^{-2}$ & $5.02\times10^{-2}$ & $5.49\times10^{-2}$ & $5.49\times10^{-2}$ & $5.10\times10^{-2}$\\
$2.51\times10^{-3}$ & $3.74\times10^{-2}$ & $4.07\times10^{-2}$ & $5.01\times10^{-2}$ & $5.01\times10^{-2}$\\
$6.31\times10^{-4}$ & $2.31\times10^{-2}$ & $2.41\times10^{-2}$ & $3.54\times10^{-2}$ & $5.00\times10^{-2}$\\
$1.59\times10^{-4}$ & $1.31\times10^{-2}$ & $1.31\times10^{-2}$ & $2.17\times10^{-2}$ & $5.00\times10^{-2}$\\
$3.98\times10^{-5}$ & $7.14\times10^{-3}$ & $7.14\times10^{-3}$ & $1.06\times10^{-2}$ & $5.00\times10^{-2}$\\
$1.00\times10^{-5}$ & $3.71\times10^{-3}$ & $3.79\times10^{-3}$ & $5.98\times10^{-3}$ & $3.79\times10^{-3}$\\
\midrule
$\hat p$ & $0.384{\scriptstyle\pm0.022}$ & $0.397{\scriptstyle\pm0.021}$
& $0.336{\scriptstyle\pm0.039}$ & $0.269{\scriptstyle\pm0.153}$\\
\bottomrule
\end{tabular}
\end{table}

\begin{figure}[ht]
\centering
\begin{subfigure}{0.49\textwidth}\centering
\includegraphics[width=\textwidth]{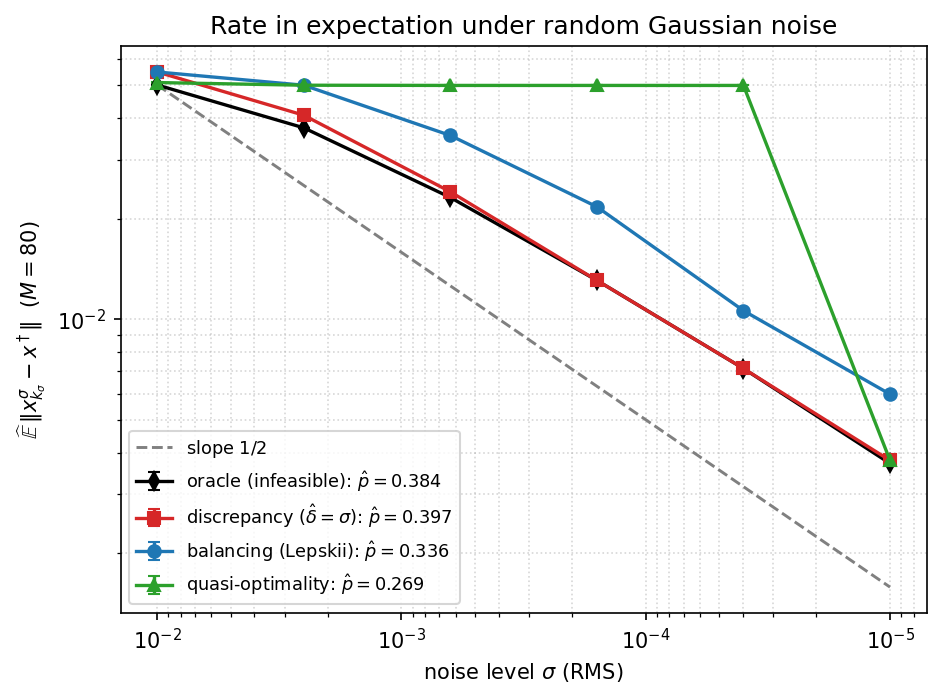}
\caption{Rate in expectation.}\label{fig:rate}
\end{subfigure}\hfill
\begin{subfigure}{0.49\textwidth}\centering
\includegraphics[width=\textwidth]{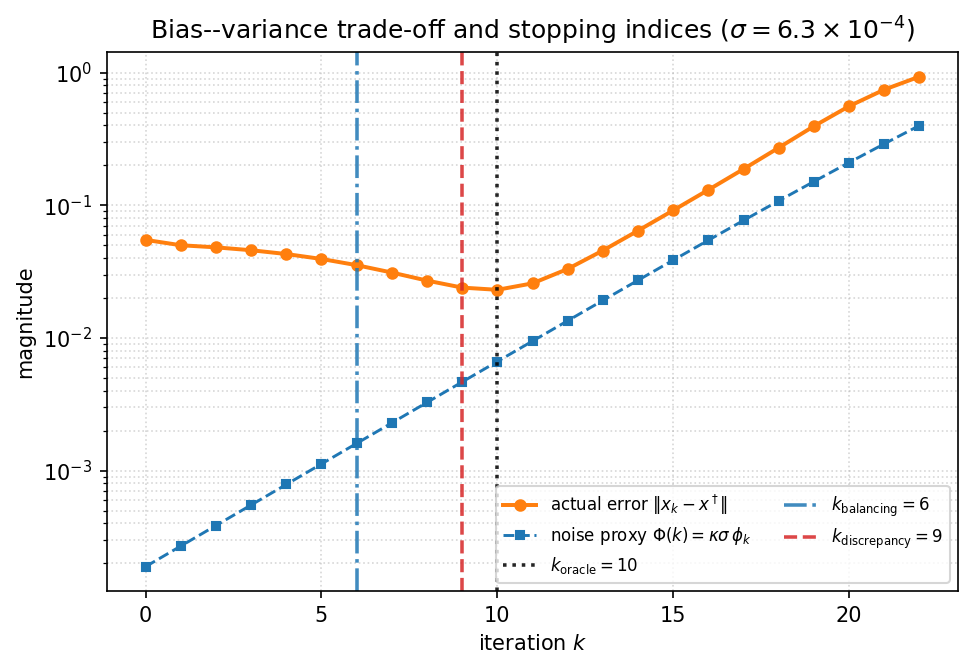}
\caption{Bias-variance trade-off, one realization.}\label{fig:tradeoff}
\end{subfigure}
\caption{Left: Monte Carlo mean error against noise level. The discrepancy rule
(red) lies on top of the oracle (black), the balancing rule (blue) is parallel
within a constant, and quasi-optimality (green) is erratic. Right: the actual
error (orange) is U-shaped in $k$ as bias decreases and propagated noise, tracked
by the proxy $\Phi(k)$ (blue), grows. The oracle, balancing, and discrepancy
indices all fall near the minimum.}
\label{fig:rate-trade}
\end{figure}

\paragraph{Concentration and reliability} \Cref{fig:box} shows the error
distributions over the $M=80$ realizations. They are tightly concentrated, with
standard deviations of at most $0.9\%$ of the mean for the oracle and
discrepancy rules (zero at $\sigma=10^{-2}$, where the discrepancy rule stops at
$k=0$ in every run and returns $x_0$), as \cref{thm:conc} predicts for the linearized estimator
(\cref{rem:concnum}).

\begin{figure}[ht]
\centering
\includegraphics[width=0.96\textwidth]{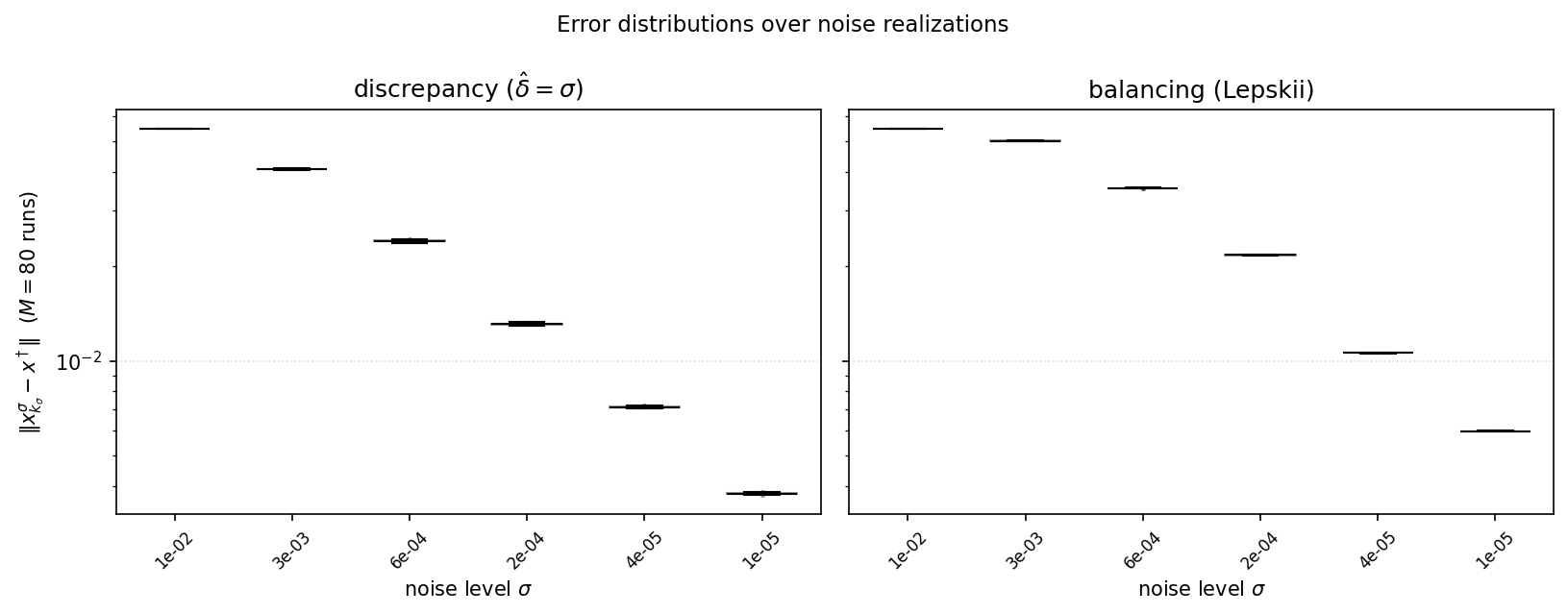}
\caption{Error distributions over $M=80$ noise realizations for the discrepancy
(left) and balancing (right) rules. The tight spread shows that the
expected-error rates are representative of individual realizations.}
\label{fig:box}
\end{figure}

\paragraph{On the observed exponent} The fitted exponents ($0.38$ to $0.40$ for
the oracle and discrepancy rules) lie below the proven upper-bound exponent
$\tfrac12$. This neither confirms nor contradicts the theory. The fit covers three
decades of $\sigma$ on a parameter grid with ratio $0.7$, it is stable across
rules and, in companion deterministic runs, across mesh refinements, and a
local-slope diagnostic on \cref{fig:rate} shows the slope increasing toward the
smallest noise levels without reaching $\tfrac12$. Smaller $\sigma$ would require
the mesh-noise coupling of the supplementary material, since on a fixed mesh the
discretization floor intervenes first. We see no systematically faster rate,
consistent with the inefficiency relative to cut-off.

\paragraph{A second test problem, with uncertainty quantification} To check
that the rate is not specific to the cubic nonlinearity or to the symmetric
source of the main set-up, we repeat the experiment with a different monotone
nonlinearity, $g(x)=\sinh x$, an asymmetric multi-bump source $\xdag$, and the
uninformed guess $x_0\equiv1$, on the same operator $-\Delta$ ($N=40$, $M=40$
Gaussian realizations per level, $\alpha_0=0.5$, ratio $0.7$). In this experiment
the noise level and the errors are measured in the Euclidean norm of the grid
vector, which is $1/h=41$ times the discrete $L^2$ norm; this rescales all errors
but leaves exponents and relative spreads unchanged. \Cref{tab:test2} reports the Monte
Carlo mean and standard deviation of the oracle error. The fitted exponent is
$0.52$, in agreement with the predicted $\tfrac12$. The standard deviations are
between $0.7\%$ and $1.3\%$ of the mean, and the $95\%$ confidence half-widths,
about $1.96\,\mathrm{s.d.}/\sqrt{M}$, are a third of that. Here the discrepancy
rule is the plain residual test $\norm{F(\xsk)-\ys}\le1.3\sigma$. It is fed only
$\sigma$ and is markedly less accurate than the oracle, by a factor between $2.3$
and $4.7$ over this range, although its error decreases faster (fitted exponent
$0.68$).

\begin{table}[ht]
\centering
\caption{Second test problem ($g=\sinh$, asymmetric source, $N=40$, $M=40$):
Monte Carlo mean $\pm$ standard deviation of the oracle error, with noise levels
and errors in the Euclidean grid norm. Fitted rate
exponent $0.52$ (predicted $0.50$).}
\label{tab:test2}
\begin{tabular}{lccccc}
\toprule
$\sigma$ & $10^{-2}$ & $3\!\times\!10^{-3}$ & $10^{-3}$ & $3\!\times\!10^{-4}$ & $10^{-4}$\\
\midrule
oracle error & $1.886$ & $1.058$ & $0.611$ & $0.319$ & $0.169$\\
$\pm$ s.d.\ & $0.013$ & $0.009$ & $0.007$ & $0.004$ & $0.002$\\
\bottomrule
\end{tabular}
\end{table}

\section{Conclusion}\label{sec:conclusion}

Under random noise, the convergence of the linearized iterated Lavrentiev method
is governed by the decay of a distance function rather than by a binary range
condition, provided this decay is uniform across discretizations
(\cref{ass:usc}). The rate in expectation then degrades gracefully, down to
logarithmic, as the continuum error loses smoothness, and the classical H\"older
theory is the case in which the distance function collapses (\cref{thm:vsc}).
For the full nonlinear iteration under the order-one source condition,
energy-norm concentration and a computable truncation replace the missing
pathwise bound, and the discrepancy principle fed only a noise level attains
$\sigma_E^{1/2}$ jointly in $(\sigma,n)$. The main open questions, each stated
where it arises, are a proof of \cref{ass:usc} from the continuum operator, the
nonlinear rate for rough errors and for $\nu<1$ (\cref{rem:nlopen}), the
discrepancy principle under the approximate source condition
(\cref{rem:discvsc}), and a Gaussian minimax analysis (\cref{sec:optimality}).

\appendix
\makeatletter
\renewcommand{\thetheorem}{\Alph{section}.\arabic{theorem}}
\makeatother
These appendices collect supporting material, each tied to a claim in the main
text. \Cref{app:sharpvar} sharpens the variance bound of \cref{lem:variance}
through the spectrum. \Cref{app:balancing} gives an adaptive (Lepskii)
alternative to the discrepancy rule of \cref{thm:discrepancy}, and
\cref{app:conc} gives a concentration bound showing that the linearized estimator
of \cref{sec:rates} is sharply peaked about its mean. Further numerical
validation of the rate theory, its extension to heterogeneous media with a
total-error and mesh-coupling theorem, and a large-scale comparison against an
adjoint-based scheme are given in the supplementary material; these support, but
are not required by, the main result of \cref{sec:vsc}.

\section{Sharp variance and the operating regime}\label{app:sharpvar}

\begin{proposition}[Sharp variance]\label{prop:sharpvar}
The variance proxy $\phi_k^2=\tfrac1n\sum_i(\lambda_i+\alpha_k)^{-2}$ of
\cref{lem:variance} obeys:
\begin{enumerate}[label=\textup{(\roman*)},leftmargin=2.4em,itemsep=1pt]
\item if $\alpha_k\gtrsim\lambda_1$, then $\phi_k^2\asymp\alpha_k^{-2}$ and the crude
bound is sharp;
\item if $\alpha_k\lesssim\lambda_n$, then $\phi_k^2\asymp\frac1n\sum_i\lambda_i^{-2}$
is constant. The variance saturates, and the crude bound $\alpha_k^{-2}$
overestimates it by the unbounded factor $\asymp n\,\alpha_k^{-2}\bigl(\sum_i\lambda_i^{-2}\bigr)^{-1}$.
\end{enumerate}
\end{proposition}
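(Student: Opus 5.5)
The plan is to compare each summand of $\phi_k^2=\tfrac1n\sum_i(\lambda_i+\alpha_k)^{-2}$ with its two natural majorants, $\alpha_k^{-2}$ and $\lambda_i^{-2}$, using only $0<\lambda_n\le\lambda_i\le\lambda_1$, which holds under \cref{ass:M} with $\lambda_1$ denoting the largest eigenvalue of $A$. Both statements follow from the elementary two-sided bounds
\[
  \max(\lambda_i,\alpha_k)\le\lambda_i+\alpha_k\le2\max(\lambda_i,\alpha_k).
\]
Read precisely, "$\alpha_k\gtrsim\lambda_1$" means $\alpha_k\ge c\lambda_1$ for a fixed $c>0$, and "$\alpha_k\lesssim\lambda_n$" means $\alpha_k\le C\lambda_n$. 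The implied constants in $\asymp$ will depend only on $c$ and $C$.

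For (i), the upper bound $\phi_k^2\le\alpha_k^{-2}$ is already \eqref{eq:phik}. For the lower bound I use $\lambda_i\le\lambda_1\le\alpha_k/c$. This gives $\lambda_i+\alpha_k\le(1+c^{-1})\alpha_k$ for every $i$, so each summand is at least $(1+c^{-1})^{-2}\alpha_k^{-2}$. Averaging over the $n$ terms gives $\phi_k^2\ge(1+c^{-1})^{-2}\alpha_k^{-2}$. Hence $\phi_k^2\asymp\alpha_k^{-2}$, and the crude bound is sharp up to a constant.

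For (ii), the upper bound $(\lambda_i+\alpha_k)^{-2}\le\lambda_i^{-2}$ is trivial. For the lower bound I use $\alpha_k\le C\lambda_n\le C\lambda_i$, which gives $\lambda_i+\alpha_k\le(1+C)\lambda_i$. Summing yields
\[
  (1+C)^{-2}\,\tfrac1n\sum_i\lambda_i^{-2}\le\phi_k^2\le\tfrac1n\sum_i\lambda_i^{-2}.
\]
This lower bound is independent of $\alpha_k$, so the variance saturates. The overestimation factor is then
\[
  \alpha_k^{-2}/\phi_k^2\asymp\alpha_k^{-2}\Bigl(\tfrac1n\sum_i\lambda_i^{-2}\Bigr)^{-1}=n\,\alpha_k^{-2}\Bigl(\sum_i\lambda_i^{-2}\Bigr)^{-1}.
\]
It is unbounded because it blows up as $\alpha_k\downarrow0$ on a fixed grid. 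Since $\alpha_k\le C\lambda_n$, it is also at least $n\lambda_n^{-2}\bigl(C^2\sum_i\lambda_i^{-2}\bigr)^{-1}$.

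The only step needing care is interpreting "unbounded" quantitatively, for instance uniformly in $n$. One can also invoke \cref{lem:weyl}: $\sum_i\lambda_i^{-2}\asymp\sum_i i^2\asymp n^3$ and $\lambda_n^{-2}\asymp n^2$. With these the factor is $\asymp n^{-2}\alpha_k^{-2}$, which is $\gtrsim1$ in regime (ii) and tends to infinity as $\alpha_k\downarrow0$. This gives the explicit meaning of the claim; the rest is bookkeeping.
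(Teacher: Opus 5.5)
Your proof is correct and matches the paper's argument essentially line by line. In (i) you bound each summand below by $(1+c^{-1})^{-2}\alpha_k^{-2}$ using $\lambda_i\le\lambda_1\lesssim\alpha_k$, and in (ii) you sandwich each summand against $\lambda_i^{-2}$ using $\lambda_i\ge\lambda_n\gtrsim\alpha_k$, exactly as the paper does; your use of \cref{lem:weyl} to show the overestimation factor is $\asymp n^{-2}\alpha_k^{-2}\gtrsim1$ in regime (ii) is a correct small addition that the paper does not spell out.
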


\begin{proof}
In case (i) every eigenvalue satisfies $\lambda_i\le\lambda_1\le c\,\alpha_k$ for
some constant $c$, so $\alpha_k\le\lambda_i+\alpha_k\le(1+c)\alpha_k$ and each
summand $(\lambda_i+\alpha_k)^{-2}$ lies between $(1+c)^{-2}\alpha_k^{-2}$ and
$\alpha_k^{-2}$. There are $n$ summands and we divide by $n$, so
$\phi_k^2\asymp\alpha_k^{-2}$, and the crude upper bound is attained up to the
constant $(1+c)^{-2}$.

In case (ii) every eigenvalue satisfies $\lambda_i\ge\lambda_n\ge c'\alpha_k$, so
$\lambda_i\le\lambda_i+\alpha_k\le(1+1/c')\lambda_i$ and each summand is
$\asymp\lambda_i^{-2}$, with constants that do not depend on $\alpha_k$. Hence
$\phi_k^2\asymp\frac1n\sum_i\lambda_i^{-2}$, a quantity fixed by the spectrum
alone. As $\alpha_k\downarrow0$ this stays bounded while the crude bound
$\alpha_k^{-2}$ grows without limit. The ratio of the two is
$\alpha_k^{-2}/\bigl(\frac1n\sum_i\lambda_i^{-2}\bigr)=n\,\alpha_k^{-2}
\bigl(\sum_i\lambda_i^{-2}\bigr)^{-1}$, up to the stated constants.
\end{proof}

\begin{remark}[Operating point]\label{rem:operating}
At the discrepancy or balancing level $\alpha_{k_\sigma}\asymp\sigma^{1/2}$ one
has, for the noise levels of interest, $\lambda_n\ll\alpha_{k_\sigma}$. So the
dominant block of high-index modes ($\lambda_i\ll\alpha_{k_\sigma}$) makes
$\phi_{k_\sigma}^2\asymp\alpha_{k_\sigma}^{-2}$. The crude variance bound is
therefore sharp at the operating point, and the rate $\sigma^{1/2}$ cannot be
improved by the refinement. The saturation (ii) governs only the
over-regularized tail $k\gg k_\sigma$. Both predictions are borne out in
the supplementary material, where the Monte Carlo variance matches $\sigma^2\phi_k^2$ to
within $0.7\%$ and departs from the crude bound by a factor $7.3$ at the smallest
$\alpha_k$.
\end{remark}

\section{A balancing rule adaptive to smoothness}\label{app:balancing}

When neither the smoothness nor a sharp variance proxy is known, the Lepskii
balancing principle \cite{Lepskii1990,MathePereverzev2003} selects the index
adaptively. We state it for the linearized iteration, where the error
decomposition is exact. Let $\beta_k$ be a known non-increasing bound on the
bias, $b_k\le\beta_k$; under the classical source condition
$\beta_k=\kappa_M\alpha_{k-1}\norm{v}$ (\cref{lem:biasspec}), and under the
approximate source condition $\beta_k=\kappa_M\Theta(\alpha_{k-1})$
(\cref{lem:biasvsc}). Let $\Phi(k)$ be a non-decreasing bound on the propagated
noise. By \cref{lem:variance} we may take $\Phi(k)=\kappa\,\sigma\,\phi_k$ for a
constant $\kappa\ge2\kappa_M$, which is what \cref{thm:balancing} requires; note
that $\phi_{k-1}\le\phi_k$, so this also bounds the root mean square of $n_{k-1}$,
the noise entering $\xsk$ by \eqref{eq:linsplit}. The proxy $\phi_k$ is computed
from the reference operator $A=F'(\xdag)$, which depends on the unknown $\xdag$;
so the rule as analysed is adaptive to the smoothness but not fully
implementable, and a practical version replaces $A$ by $F'(x_k)$ or a fixed
surrogate. Define
\begin{equation}\label{eq:balancing}
  k_{\mathrm{bal}}=\min\Bigl\{k:\ \norm{\xsk-\xsk[j]}\le 4\,\Phi(j)
  \ \text{ for all } j>k\Bigr\}.
\end{equation}

\begin{theorem}[Oracle inequality]\label{thm:balancing}
Let \cref{ass:M} hold and consider the linearized iteration. On the event
$\mathcal{G}:=\{n_{k-1}\le\Phi(k)\ \text{for all}\ 1\le k\le k_{\max}\}$ the
balanced index satisfies the oracle inequality
\begin{equation}\label{eq:oracle}
  \norm{\xsk[k_{\mathrm{bal}}]-\xdag}\le 6\ell\,\min_{1\le k\le k_{\max}}
  \bigl(\beta_k+\Phi(k)\bigr).
\end{equation}
Consequently $k_{\mathrm{bal}}$ attains, up to a constant and without knowledge
of $\nu$ or $\psi$, the a~priori rates of \cref{thm:apriori} and \cref{thm:vsc}.
The event $\mathcal{G}$ has probability at least
$1-\sum_{k=1}^{k_{\max}}\exp(-r_{k-1}/2)$, where
$r_k:=\tr(B_k^{*}B_k)/\norm{B_k}^2$ is the stable rank of the resolvent
$B_k=(A+\alpha_kI)^{-1}$ in the discrete $L^2$ inner product.
\end{theorem}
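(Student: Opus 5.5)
The proof follows the standard Lepskii argument of \cite{MathePereverzev2003}, run pathwise on $\mathcal{G}$, with a separate probability estimate for $\mathcal{G}$.

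\emph{Pathwise error split.} On $\mathcal{G}$, \cref{lem:biasvar}(a) gives for every $k$
\[
\norm{\xsk-\xdag}\le b_k+n_{k-1}\le\beta_k+\Phi(k).
\]
Let $k_\ast:=\max\{k\le k_{\max}:\beta_k\le\Phi(k)\}$ be the crossing index. It exists after enlarging $\Phi$ at $k=1$ if necessary, or by treating the degenerate case separately.

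\emph{Step 1: $k_{\mathrm{bal}}\le k_\ast$.} Take $j>k_\ast$. Since $\beta$ is non-increasing and $\Phi$ is non-decreasing,
\[
\norm{\xsk[k_\ast]-\xsk[j]}\le\beta_{k_\ast}+\Phi(k_\ast)+\beta_j+\Phi(j)\le 4\Phi(j),
\]
because $\beta_j\le\beta_{k_\ast}\le\Phi(k_\ast)\le\Phi(j)$. So $k_\ast$ is admissible in \eqref{eq:balancing}, and $k_{\mathrm{bal}}\le k_\ast$.

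\emph{Step 2: error of the balanced iterate.} Applying the definition of $k_{\mathrm{bal}}$ with $j=k_\ast$ (the case $k_{\mathrm{bal}}=k_\ast$ is trivial),
\[
\norm{\xsk[k_{\mathrm{bal}}]-\xdag}\le 4\Phi(k_\ast)+\beta_{k_\ast}+\Phi(k_\ast)\le 6\Phi(k_\ast).
\]

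\emph{Step 3: comparison with the oracle minimum.} For any $k\le k_\ast$ we have $\Phi(k_\ast)\le\Phi(k_\ast)+\beta_k$. For $k>k_\ast$, in particular $k=k_\ast+1$, we have $\beta_{k_\ast+1}>\Phi(k_\ast+1)$. It remains to bound $\Phi(k_\ast)\le\beta_{k}$ for all $k>k_\ast$, which follows from $\Phi(k_\ast)\le\Phi(k_\ast+1)<\beta_{k_\ast+1}\le\beta_k$ for $k\ge k_\ast+1$. The factor $\ell$ is a safety margin for passing between neighbouring indices under \eqref{eq:alpha}: the proxies at consecutive steps differ by at most a factor $\ell$, since $\alpha_{k-1}/\alpha_k\le\ell$ and $\Theta$ is concave, so $\Theta(\ell\alpha)\le\ell\Theta(\alpha)$. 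This gives \eqref{eq:oracle}. The rate claims then follow by inserting the a~priori index of \cref{thm:apriori} or \cref{thm:vsc} into the minimum.

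\emph{Step 4: probability of $\mathcal{G}$.} This is the main obstacle. I would use Gaussian concentration for the Lipschitz map $\xi\mapsto|B_{k-1}e|$, as in the proof of \eqref{eq:conce}. Its Lipschitz constant is $(\sigma/\sqrt n)\norm{B_{k-1}}$, and its mean is at most $(\sigma^2\tr(B_{k-1}^*B_{k-1})/n)^{1/2}$ by \eqref{eq:var}. By the lower bound in \eqref{eq:var2}, this mean is at most $\kappa_M\sigma\phi_{k-1}$, and $\phi_{k-1}\le\phi_k$. With $\kappa\ge2\kappa_M$, the deviation allowed above the mean is at least $\kappa_M\sigma\phi_{k}$, which in turn is at least the root mean square $(\sigma^2\tr(B_{k-1}^*B_{k-1})/n)^{1/2}$. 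The tail bound is therefore
\[
\exp\!\Bigl(-\tfrac{\tr(B_{k-1}^*B_{k-1})}{2\norm{B_{k-1}}^2}\Bigr)=\exp(-r_{k-1}/2).
\]
A union bound over $k\le k_{\max}$ completes the proof. The delicate point is that the constants in \eqref{eq:var2} must match exactly so that the deviation is at least the root mean square; that is where $\kappa\ge2\kappa_M$ is used.
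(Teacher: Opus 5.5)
\paragraph{Gap in the oracle comparison.} The oracle comparison in your Step 3 fails, and the cause is the definition of the crossing index. Your Steps 1, 2 and 4 otherwise follow the paper's argument.

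Since $\beta_k$ is non-increasing and $\Phi(k)$ is non-decreasing, the set $\{k:\beta_k\le\Phi(k)\}$ is a terminal segment. Once $\beta_k\le\Phi(k)$ holds, it holds for every larger $k$. So your $k_\ast=\max\{\cdot\}$ is just $k_{\max}$. Steps 1--2 then become vacuous and give only $6\Phi(k_{\max})$, which is not an oracle bound. Step 3 has two further problems:
\begin{itemize}
\item The chain $\Phi(k_\ast)\le\Phi(k_\ast+1)<\beta_{k_\ast+1}\le\beta_k$ for $k\ge k_\ast+1$ needs $\beta$ to be non-decreasing, which is the wrong direction. With the correct crossing index it would also contradict $\beta_{k_\ast+1}\le\Phi(k_\ast+1)$.
\item For $k\le k_\ast$, the inequality $\Phi(k_\ast)\le\Phi(k_\ast)+\beta_k$ does not compare $\Phi(k_\ast)$ with $\beta_k+\Phi(k)$, because $\Phi(k)\le\Phi(k_\ast)$ there. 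This is exactly the case that needs an argument, and it is missing.
\end{itemize}

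\paragraph{The fix.} As in the paper, let $k^\ast$ be the \emph{first} index with $\beta_{k^\ast}\le\Phi(k^\ast)$. Your Steps 1--2 then go through unchanged and give $\norm{\xsk[k_{\mathrm{bal}}]-\xdag}\le6\Phi(k^\ast)$. The comparison splits into two cases:
\begin{itemize}
\item For $k\ge k^\ast$ it is trivial: $\beta_k+\Phi(k)\ge\Phi(k)\ge\Phi(k^\ast)$.
\item For $k<k^\ast$, use minimality, $\beta_{k^\ast-1}>\Phi(k^\ast-1)$. Combine it with the one-step growth bound $\Phi(k^\ast)\le\ell\,\Phi(k^\ast-1)$. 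This growth bound holds termwise in $\phi_k$, because $(\lambda+\alpha_{k-1})/(\lambda+\alpha_k)\le\alpha_{k-1}/\alpha_k\le\ell$ for all $\lambda\ge0$. Then $\Phi(k^\ast)<\ell\,\beta_{k^\ast-1}\le\ell\,\beta_k$.
\end{itemize}
This is where the factor $\ell$ in \eqref{eq:oracle} comes from. It is a property of the specific noise proxy $\Phi(k)=\kappa\sigma\phi_k$; concavity of $\Theta$ plays no role.

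\paragraph{Step 4.} Your concentration argument is correct and matches the paper's choice $t=1$. One slip: the bound on the mean uses the upper inequality in \eqref{eq:var2}, not the lower one.
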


\begin{remark}[The constant $\kappa$ in practice]\label{rem:kappa}
\Cref{thm:balancing} requires $\kappa\ge2\kappa_M\ge2$; with such $\kappa$ the
rule is more conservative and stops earlier. The experiment of
\cref{sec:numerics} uses the calibrated value $\kappa=0.3$ and $\phi_k$ from the
reference operator, so it illustrates a practical balancing heuristic rather
than verifying the theorem.
\end{remark}

The stable rank satisfies $1\le r_k\le n$. In the operating regime
$\alpha_k\gtrsim\lambda_1$ of \cref{prop:sharpvar}(i) all singular values of
$B_k$ are comparable and $r_k\asymp n$, so the probability is
$1-O(k_{\max}e^{-cn})$. When $\alpha_k$ is far below the leading eigenvalues a
few directions dominate the resolvent, $r_k$ can be much smaller than $n$, and
the exponent is correspondingly weaker. We state the bound with $r_k$ rather
than $n$ for this reason.

\begin{proof}
This is the Math\'e--Pereverzev balancing theorem
\cite[Thm.~1]{MathePereverzev2003} applied to the decomposition
\eqref{eq:linsplit}. We explain how the pieces fit.

The theorem needs two monotone bounds and a pathwise noise bound. The bias
bound $\beta_k$ is non-increasing by assumption, and $\Phi(k)$ is non-decreasing
because $\phi_k^2$ grows as $\alpha_k$ decreases. On $\mathcal{G}$,
\cref{lem:biasvar}(a) gives $\norm{\xsk-\xdag}\le\beta_k+\Phi(k)$ for every
$k$.

Under these conditions the balancing argument runs as follows. Let $k^\ast$ be
the first index with $\beta_{k^\ast}\le\Phi(k^\ast)$. For any $j>k\ge k^\ast$,
monotonicity gives $\beta_k\le\beta_{k^\ast}\le\Phi(k^\ast)\le\Phi(k)\le\Phi(j)$
and $\beta_j\le\Phi(j)$, so
\[
  \norm{\xsk-\xsk[j]}\le\norm{\xsk-\xdag}+\norm{\xsk[j]-\xdag}
  \le\beta_k+\Phi(k)+\beta_j+\Phi(j)\le4\,\Phi(j).
\]
So $k^\ast$ satisfies the test in \eqref{eq:balancing}, hence
$k_{\mathrm{bal}}\le k^\ast$. The test at $k_{\mathrm{bal}}$ with $j=k^\ast$
gives $\norm{\xsk[k_{\mathrm{bal}}]-\xsk[k^\ast]}\le4\Phi(k^\ast)$, and so
\[
  \norm{\xsk[k_{\mathrm{bal}}]-\xdag}\le4\Phi(k^\ast)+\beta_{k^\ast}+\Phi(k^\ast)
  \le6\,\Phi(k^\ast).
\]
It remains to compare $\Phi(k^\ast)$ with $\min_k(\beta_k+\Phi(k))$. For
$k\ge k^\ast$, $\beta_k+\Phi(k)\ge\Phi(k)\ge\Phi(k^\ast)$. For $k<k^\ast$, the
crossing property gives $\beta_{k^\ast-1}>\Phi(k^\ast-1)$, and since
$(\lambda+\alpha_{j-1})/(\lambda+\alpha_j)\le\alpha_{j-1}/\alpha_j\le\ell$ for
every $\lambda\ge0$, the proxy grows by at most $\ell$ per step,
$\Phi(k^\ast)\le\ell\,\Phi(k^\ast-1)$; hence
$\Phi(k^\ast)<\ell\,\beta_{k^\ast-1}\le\ell\,\beta_k\le\ell(\beta_k+\Phi(k))$. In
both cases $\Phi(k^\ast)\le\ell\min_k(\beta_k+\Phi(k))$, which with the previous
display gives \eqref{eq:oracle}. Since the right-hand side of
\eqref{eq:oracle} is, up to a constant, the quantity balanced in
\cref{thm:apriori} and \cref{thm:vsc}, $k_{\mathrm{bal}}$ attains those rates
without knowing $\nu$ or $\psi$.

For the probability of $\mathcal{G}$, fix $k$ and write
$n_{k-1}=\norm{B_{k-1}e}=(\sigma/\sqrt n)\,|B_{k-1}\xi|_h$ with
$\xi\sim\mathcal N(0,I_n)$ by \eqref{eq:noisegen}, where $|\cdot|_h$ denotes the
discrete $L^2$ norm divided by $h$, that is, the Euclidean norm. The map
$\xi\mapsto|B_{k-1}\xi|_h$ is Lipschitz with constant $\norm{B_{k-1}}$ with
respect to the Euclidean norm, and its mean is at most
$(\E|B_{k-1}\xi|_h^2)^{1/2}=\tr(B_{k-1}^{*}B_{k-1})^{1/2}$. Gaussian
concentration \cite[Thm.~5.6]{BoucheronLugosiMassart2013} gives, for $t>0$,
$\Prob\bigl(|B_{k-1}\xi|_h>(1+t)\tr(B_{k-1}^{*}B_{k-1})^{1/2}\bigr)\le
\exp\bigl(-t^2\tr(B_{k-1}^{*}B_{k-1})/(2\norm{B_{k-1}}^2)\bigr)=\exp(-t^2r_{k-1}/2)$.
Rescaling by $\sigma/\sqrt n$ and using \eqref{eq:var}, this says
$\Prob\bigl(n_{k-1}>(1+t)(\E n_{k-1}^2)^{1/2}\bigr)\le\exp(-t^2r_{k-1}/2)$. Take
$t=1$. By \eqref{eq:var2} and $\phi_{k-1}\le\phi_k$,
$2(\E n_{k-1}^2)^{1/2}\le2\kappa_M\sigma\phi_{k}\le\Phi(k)$ because
$\kappa\ge2\kappa_M$. So $\Prob(n_{k-1}>\Phi(k))\le e^{-r_{k-1}/2}$, and a union
bound over $k$ gives the claim. The exponent involves the stable rank and not
$n$: the norm of $B_{k-1}\xi$ concentrates at the scale of $\norm{B_{k-1}}$, so
only when the singular values are comparable does one recover an exponent of
order $n$.
\end{proof}

\section{Concentration of the linearized estimator}\label{app:conc}

For the linearized estimator, the error of a single realization concentrates
sharply about its median, so the expected rate is also the typical one. The
argument needs $e\mapsto\xsk$ to be globally Lipschitz, which holds for the
linearized iteration by \eqref{eq:linsplit}. For the nonlinear iteration it holds
only locally, and a localization argument, which we do not carry out, would be
needed.

\begin{theorem}[Sub-Gaussian concentration; linearized estimator]\label{thm:conc}
Let \cref{ass:M} hold. Fix $k$ and let $m_k$ be a median of
$\norm{\xsk[k+1]-\xdag}$ for the linearized iteration. Then for all $t>0$,
\begin{equation}\label{eq:conc-est}
  \Prob\bigl(\,\bigl|\norm{\xsk[k+1]-\xdag}-m_k\bigr|>t\,\bigr)
  \le 2\exp\!\Bigl(-\frac{n\,\alpha_k^{2}}{2\kappa_M^{2}\sigma^{2}}\,t^{2}\Bigr).
\end{equation}
Consequently the standard deviation of $\norm{\xsk[k+1]-\xdag}$ is at most
$\kappa_M\sigma/(\alpha_k\sqrt n)$. At the operating point $\alpha_k\asymp\sigma^{1/2}$
this is a fraction $\lesssim\kappa_M/\sqrt n$ of the target rate scale
$\sigma^{1/2}$. Since our theory provides an upper bound of order $\sigma^{1/2}$
on the mean error but not a matching lower bound, we phrase this as a statement
about the target scale rather than about the mean itself.
\end{theorem}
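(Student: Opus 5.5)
The plan is to apply Gaussian concentration for Lipschitz functions, exactly as in the proof of \cref{lem:conc} and of the probability bound in \cref{thm:balancing}, but in its median form. By \cref{lem:biasvar}(a), for the linearized iteration
\[
  \xsk[k+1]-\xdag=e_{k+1}+(A+\alpha_kI)^{-1}e,
\]
where $e_{k+1}=\alpha_k(A+\alpha_kI)^{-1}e_0$ is deterministic. Substituting $e=\frac{\sigma}{h\sqrt n}\,\xi$ from \eqref{eq:noisegen}, we view the error norm as the function
\[
  G(\xi):=\Bigl\|e_{k+1}+\frac{\sigma}{h\sqrt n}\,(A+\alpha_kI)^{-1}\xi\Bigr\|
\]
of the standard Gaussian vector $\xi\in\R^n$, with Euclidean norm $\abs{\cdot}$ on the input.

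The first step is the Lipschitz constant of $G$. By the reverse triangle inequality, $\abs{G(\xi)-G(\xi')}\le\frac{\sigma}{h\sqrt n}\norm{(A+\alpha_kI)^{-1}(\xi-\xi')}$. The discrete $L^2$ norm satisfies $\norm{u}=h\abs{u}$, and the operator norm of $(A+\alpha_kI)^{-1}$ is the same in either norm, since they differ by a scalar factor. Hence \eqref{eq:resolventL2} gives $\norm{(A+\alpha_kI)^{-1}(\xi-\xi')}\le\kappa_M\alpha_k^{-1}h\abs{\xi-\xi'}$. Therefore $G$ is Lipschitz with constant
\[
  L_G=\frac{\kappa_M\sigma}{\alpha_k\sqrt n}.
\]
This uses \cref{ass:M}; without it we would only have the energy-norm bound, with its mesh factor.

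The second step is to invoke the Gaussian concentration inequality about the median, $\Prob(\abs{G-m}>t)\le2\exp(-t^2/(2L_G^2))$. One route is to quote \cite[Thm.~5.6]{BoucheronLugosiMassart2013}, which gives the mean version with this constant, and pass to the median. A cleaner route is the Gaussian isoperimetric (Borell--Sudakov--Tsirelson) form, which gives the median version directly with constant $2$. Substituting $L_G$ yields \eqref{eq:conc-est}. The main obstacle is exactly this constant bookkeeping. If only the mean version is available, I would state the result about the mean with the factor $2$ from the two tails, and note that $\abs{\text{mean}-\text{median}}\le L_G\sqrt{\pi/2}$. That changes only constants.

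For the standard-deviation claim, use the Gaussian Poincaré inequality $\mathrm{Var}\,G\le L_G^2$, which gives $\mathrm{sd}\le\kappa_M\sigma/(\alpha_k\sqrt n)$ directly. Alternatively, integrate the tail: $\E(G-m)^2=\int_0^\infty 2t\,\Prob(\abs{G-m}>t)\,dt\le4L_G^2$, and the variance is at most this. The Poincaré route is preferable because it gives the stated constant. Finally, at $\alpha_k\asymp\sigma^{1/2}$ we have $\sigma/\alpha_k\asymp\sigma^{1/2}$, so the ratio of the standard deviation to the target scale $\sigma^{1/2}$ is $\lesssim\kappa_M/\sqrt n$.
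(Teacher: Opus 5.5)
Your proposal is correct and follows the paper's proof essentially step for step. Both use \eqref{eq:linsplit} and \eqref{eq:resolventL2} to show that $\xi\mapsto\norm{\xsk[k+1]-\xdag}$ is Lipschitz with constant $\kappa_M\sigma/(\alpha_k\sqrt n)$, then apply Gaussian concentration about the median and the Gaussian Poincar\'e inequality for the standard deviation; your extra care on the median form is justified, since Gaussian isoperimetry gives it directly with constant $2$, whereas \cite[Thm.~5.6]{BoucheronLugosiMassart2013} as cited in the paper is stated about the mean.
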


\begin{proof}
The proof has two steps. First we show that the error is a Lipschitz function
of the noise. Then we apply Gaussian concentration of measure.

For the first step, take two noise vectors $e$ and $e'$ and denote the
corresponding linearized iterates by $\xsk[k+1](e)$ and $\xsk[k+1](e')$. By the
identity \eqref{eq:linsplit}, both differ from the same noise-free iterate
$x_{k+1}$ by a resolvent applied to the noise, so
\[
  \norm{\xsk[k+1](e)-\xsk[k+1](e')}=\norm{(A+\alpha_kI)^{-1}(e-e')}
  \le\frac{\kappa_M}{\alpha_k}\,\norm{e-e'}
\]
by \eqref{eq:resolventL2}. By the reverse triangle inequality,
$\bigl|\norm{\xsk[k+1](e)-\xdag}-\norm{\xsk[k+1](e')-\xdag}\bigr|\le
\norm{\xsk[k+1](e)-\xsk[k+1](e')}$. So the map $e\mapsto\norm{\xsk[k+1]-\xdag}$
is Lipschitz in the discrete $L^2$ metric with constant $\kappa_M/\alpha_k$, for
every $e$ and not only locally.

For the second step, write $e=(\sigma/(h\sqrt n))\,\xi$ with
$\xi\sim\mathcal N(0,I_n)$ as in \eqref{eq:noisegen}. Since
$\norm{e}=(\sigma/\sqrt n)\,|\xi|$ with $|\cdot|$ the Euclidean norm, the map
$\xi\mapsto\norm{\xsk[k+1]-\xdag}$ is Lipschitz with constant
$\Lambda:=\kappa_M\sigma/(\alpha_k\sqrt n)$ with respect to $|\cdot|$. The
Gaussian concentration inequality \cite[Thm.~5.6]{BoucheronLugosiMassart2013}
states that a $\Lambda$-Lipschitz function $f$ of a standard Gaussian vector
satisfies $\Prob(|f(\xi)-m|>t)\le2\exp(-t^2/(2\Lambda^2))$ for a median $m$ of
$f(\xi)$. Inserting $\Lambda$ gives
$t^2/(2\Lambda^2)=n\alpha_k^2t^2/(2\kappa_M^2\sigma^2)$, which is
\eqref{eq:conc-est}.

For the standard deviation, the Gaussian Poincar\'e inequality
\cite{BoucheronLugosiMassart2013} gives $\operatorname{Var}f(\xi)\le\Lambda^2$ for
any $\Lambda$-Lipschitz $f$, so the standard deviation is at most
$\Lambda=\kappa_M\sigma/(\alpha_k\sqrt n)$. At the operating point
$\alpha_k\asymp\sigma^{1/2}$ this is $\asymp\kappa_M\sigma^{1/2}/\sqrt n$, that is,
a fraction $\lesssim\kappa_M/\sqrt n$ of the target rate scale $\sigma^{1/2}$.
\end{proof}

\begin{remark}\label{rem:concnum}
With $n=49^2$, \cref{thm:conc} bounds the standard deviation by about
$2\%\times\kappa_M$ of the target scale $\sigma^{1/2}$. The nonlinear Monte Carlo
runs of \cref{sec:numerics} give at most $0.9\%$ of the measured mean
(\cref{tab:stat,fig:box}), which suggests, but does not prove, that the nonlinear
estimator inherits this concentration.
\end{remark}

\ack
The authors thank the anonymous referees for comments that improved the paper.

\section*{References}
\bibliographystyle{iopart-num}
\bibliography{references}

\end{document}


\title[Supplementary material]{Supplementary material for\\
``Iterated Lavrentiev regularization under random noise: variational source
conditions and rates in expectation''}

\author{Ankit Singh, Hemant Sharma and Kiran Davuluri}

\maketitle

\noindent This document contains supplementary material for the article of the
same title. Section, equation, theorem, and figure numbers of the form S$m$.$n$
refer to this document; unprefixed numbers (for example Lemma~2.3 or
Section~3) refer to the main article. The material here supports, but is not
required by, the main results.
\medskip

\section{Spectral validation and the general source-condition rate}
\label{sec:advnum}

This section reports three experiments that test the finer theory of
sections 2.3 and 3 directly. They use the bounded-away source
$\xdag=1+\tfrac12\sin(\pi\xi_1)\sin(\pi\xi_2)$ on an $N=39$ grid, so that
$M_{g'(\xdag)}$ is well conditioned and fractional powers $A^{\nu}$ can be
computed through the eigendecomposition of the symmetric surrogate
$B=M_{g'(\xdag)}^{1/2}KM_{g'(\xdag)}^{1/2}$.

\paragraph{Spectrum and effective dimension} The computed eigenvalues of $A$
(\cref{fig:spectrum}, left) decay as $\lambda_i\propto i^{-0.83}$ over the whole index range, consistent
with the Weyl rate $i^{-1}$ of Lemma 2.7 once one allows for the flattening of
the discrete spectrum at high index, which lowers the fitted exponent, and for
the multiplier. The effective dimension $N_{\rm eff}(\alpha)$
(\cref{fig:spectrum}, right) grows from $2.6$ at $\alpha=1$ to $997$ at
$\alpha\approx3.9\times10^{-4}$, on $n=1521$ modes. This is the growth
$\alpha^{-1}\log(1+\alpha n)$ of (2.18): at the last value $\alpha n\approx0.59$
and the effective dimension is already two thirds of $n$. It quantifies how
many modes the method inverts as regularization is relaxed.

\paragraph{Validation of the variance bound} \Cref{fig:varvalid} compares, across
the whole parameter schedule, the Monte Carlo estimate of $\E n_k^2$ (over $200$
noise draws) with the spectral proxy $\sigma^2\phi_k^2$ of Lemma 2.4
and with the crude bound $\sigma^2/\alpha_k^2$. The Monte Carlo values agree
with $\sigma^2\phi_k^2$ to within $0.7\%$ over six orders of magnitude. This is
consistent with the two-sided bound (2.15), and much tighter than that
bound, whose constant is $\kappa_M^2=2.25$ here: the singular values of the
resolvent are close to $(\lambda_i+\alpha_k)^{-1}$ although $A$ is not
symmetric in $L^2$. The crude bound is sharp for large $\alpha_k$ and
overestimates the variance by a factor $7.3$ at the smallest $\alpha_k$, the
saturation predicted by Proposition A.1(ii) and Remark A.2.

\paragraph{The general source-condition rate} Finally we test the prediction
$\E\norm{\cdot}\sim\sigma^{\nu/(\nu+1)}$ of Remark 3.2. We construct initial
guesses with $e_0=A^{\nu}v$ for $\nu\in\{0.5,0.75,1\}$, each normalized to
$\norm{e_0}=0.05$, and run the discrepancy-stopped method over $M=50$ draws per
noise level. \Cref{tab:nu} and \cref{fig:nusweep} report fitted exponents
$0.394,0.440,0.459$. They increase monotonically with $\nu$ and are ordered as
the predicted $0.333,0.429,0.500$, but they are not equal to them: the fitted
exponent lies above the prediction for $\nu=0.5$ and below it for $\nu=1$. As
in section 5.2, these are finite-range fits over a geometric parameter
grid, and we read them as showing the predicted monotone dependence on
smoothness rather than as a quantitative confirmation of each exponent.

\begin{table}[ht]
\centering
\caption{General source condition $e_0=A^{\nu}v$ ($N=39$, $M=50$): fitted exponent
$\hat p$ in $\E\norm{\cdot}\sim\sigma^{\hat p}$ against the prediction
$\nu/(\nu+1)$.}
\label{tab:nu}
\begin{tabular}{cccc}
\toprule
$\nu$ & $0.5$ & $0.75$ & $1.0$\\
\midrule
predicted $\nu/(\nu+1)$ & $0.333$ & $0.429$ & $0.500$\\
fitted $\hat p$ & $0.394{\scriptstyle\pm0.003}$ & $0.440{\scriptstyle\pm0.007}$
& $0.459{\scriptstyle\pm0.008}$\\
\bottomrule
\end{tabular}
\end{table}

\begin{figure}[ht]
\centering
\includegraphics[width=\textwidth]{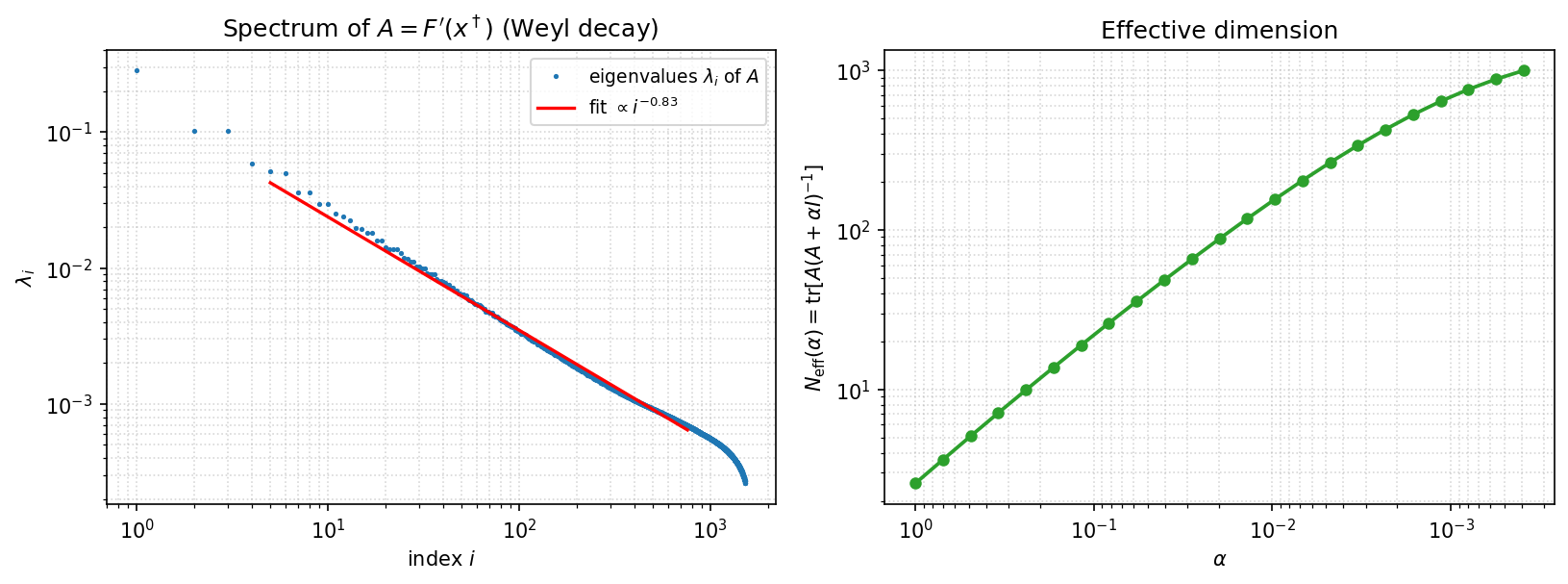}
\caption{Left: eigenvalues of $A=F'(\xdag)$ decay at the Weyl rate
(fit $\propto i^{-0.83}$). Right: the effective dimension
$N_{\rm eff}(\alpha)=\tr[A(A+\alpha I)^{-1}]$.}
\label{fig:spectrum}
\end{figure}

\begin{figure}[ht]
\centering
\begin{subfigure}{0.49\textwidth}\centering
\includegraphics[width=\textwidth]{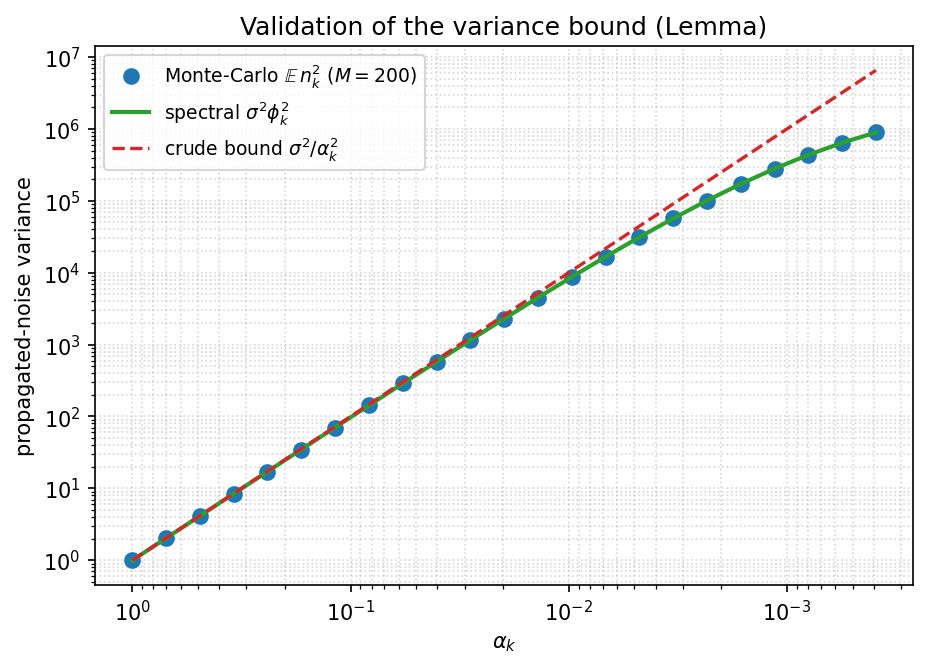}
\caption{Variance bound (Lemma 2.4).}\label{fig:varvalid}
\end{subfigure}\hfill
\begin{subfigure}{0.49\textwidth}\centering
\includegraphics[width=\textwidth]{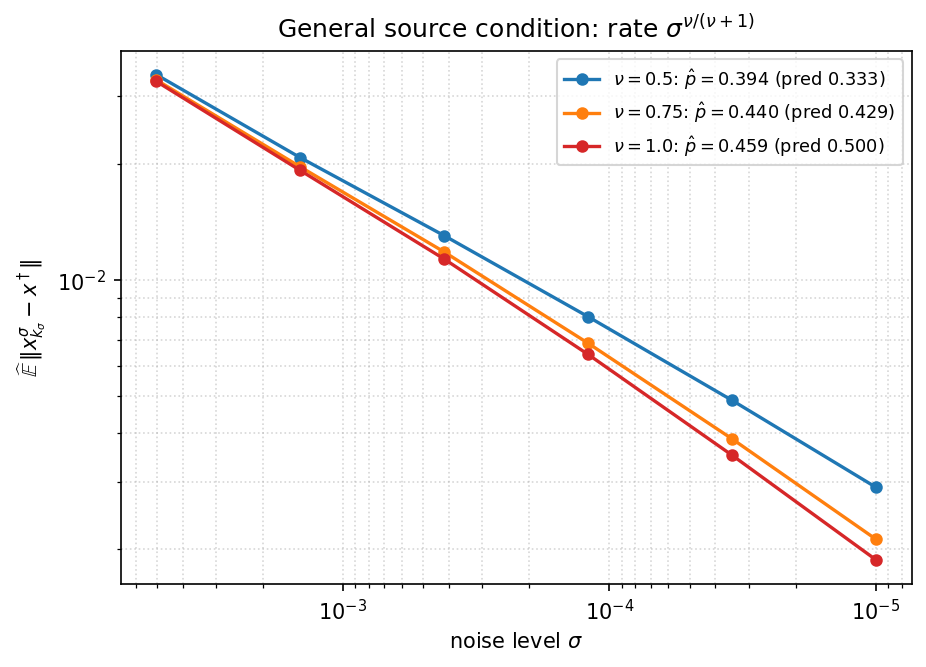}
\caption{General rate $\sigma^{\nu/(\nu+1)}$ (Remark 3.2).}\label{fig:nusweep}
\end{subfigure}
\caption{Left: Monte Carlo $\E n_k^2$ (markers) matches the spectral proxy
$\sigma^2\phi_k^2$ (green) to within $0.7\%$; the crude bound $\sigma^2/\alpha_k^2$
(red) saturates away. Right: the rate steepens with the source smoothness $\nu$, in the
order predicted by $\sigma^{\nu/(\nu+1)}$.}
\label{fig:adv}
\end{figure}

\section{Heterogeneous media, total error, and large-scale computation}
\label{sec:hetero}

The analysis so far fixes the medium and the grid. We now do three things. (i) We
extend it to a heterogeneous conductivity, making the constants explicit in the
contrast. (ii) We prove a total-error bound that couples the statistical rate
with the discretization error and identifies the optimal mesh-noise scaling.
(iii) We demonstrate the method on a large-scale heterogeneous reconstruction and
against a Tikhonov competitor.

\subsection{Heterogeneous forward operator}

Replace $-\Delta$ by $\mathcal{L}_a:=-\nabla\!\cdot(a\nabla\,\cdot\,)$ with
$0<a_-\le a(\xi)\le a_+$, which models a heterogeneous medium (for example
subsurface transport or non-destructive testing). Then $\mathcal{L}_a$ is
self-adjoint and positive with
$\lambda_1(\mathcal{L}_a)\ge a_-\lambda_1(\Lap)=2\pi^2a_-$. Its inverse
$K_a:=\mathcal{L}_a^{-1}$ has a non-negative Green's function and
$\norm{K_a}\le(2\pi^2a_-)^{-1}$. The forward operator is $F(x)=K_a\,g(x)$ with
$g(s)=s+s^3$, so $g'\ge1$ and $M_{g'(x)}$ stays uniformly positive.

\begin{proposition}[Heterogeneous verification]\label{prop:hetero}
On $B_r^{\infty}(\xdag)$ with $\rho=\norm{\xdag}_\infty+r$, the operator $F=K_ag$
satisfies Proposition 2.1 with constants
\begin{equation}\label{eq:heteroconst}
  L=\frac{3\rho}{\pi^2 a_-},\qquad a_0=\frac{1+3\rho^2}{2\pi^2 a_-};
\end{equation}
in particular $F'(x)=K_aM_{g'(x)}$ is positive-semidefinite in the energy inner
product $\inner{u}{w}_{E_a}=\inner{u}{K_a^{-1}w}$, and the whole analysis of
sections 2.2 and 3 apply as they stand with $\lambda_1$ replaced by
$2\pi^2a_-$.
\end{proposition}

\begin{proof}
The derivative is $F'(x)=K_aM_{g'(x)}$ exactly as in Proposition 2.1, with $K_a$
in place of $K$. We verify the three properties in turn.

Positivity. For $u\in X$, $\inner{F'(x)u}{u}_{E_a}=\inner{K_aM_{g'(x)}u}{K_a^{-1}u}
=\inner{M_{g'(x)}u}{u}=\int_\Om g'(x)\,u^2\ge0$, because $g'(s)=1+3s^2\ge1$. The
same computation shows that $F'(x)$ is self-adjoint in
$\inner{\cdot}{\cdot}_{E_a}$, so property (a) of Proposition 2.1 holds with the
same resolvent bounds.

Lipschitz bound. Since $g'(x)-g'(w)=3(x^2-w^2)=3(x+w)(x-w)$ and
$\norm{x}_\infty,\norm{w}_\infty\le\rho$ on the ball, we have
$\norm{g'(x)-g'(w)}_\infty\le6\rho\,\norm{x-w}_\infty$. Multiplication by a
bounded function has operator norm equal to its sup norm, so
\[
\norm{F'(x)-F'(w)}\le\norm{K_a}\,\norm{g'(x)-g'(w)}_\infty\le
(2\pi^2a_-)^{-1}\,6\rho\,\norm{x-w}_\infty,
\]
where $\norm{K_a}=1/\lambda_1(\mathcal{L}_a)\le(2\pi^2a_-)^{-1}$ follows from
$\lambda_1(\mathcal{L}_a)\ge a_-\lambda_1(\Lap)$, which in turn follows by
comparing the quadratic forms $\int a|\nabla u|^2\ge a_-\int|\nabla u|^2$. This
gives $L=3\rho/(\pi^2a_-)$.

Scaling. Similarly $\norm{g'(x)}_\infty\le1+3\rho^2$ on the ball, so
$\norm{F'(x)}\le\norm{K_a}\,\norm{g'(x)}_\infty\le(2\pi^2a_-)^{-1}(1+3\rho^2)=a_0$.

Since sections 2.2 and 3 use only these three properties, with
$\lambda_1$ entering only through the resolvent and norm bounds, the analysis
there applies with $\lambda_1$ replaced by $2\pi^2a_-$.
\end{proof}

The constants \eqref{eq:heteroconst} degrade like $a_-^{-1}$. Low-conductivity
regions amplify the ill-conditioning, as one expects physically.

\subsection{A total-error bound and the optimal mesh-noise coupling}

On a quasi-uniform mesh of width $h$ ($n\asymp h^{-2}$ degrees of freedom in
two dimensions), let $F_h$ be the discrete forward operator and $\Pi_h$ the
mesh projection. Because the deterministic theory of \cite{MahaleSingh2022} is
applied in the energy norm (Proposition 3.5), the consistency estimate we
need is in the energy norm:
\begin{equation}\label{eq:consistency}
  \norm{(F-F_h\Pi_h)\,\xdag}_E\le C_F\,h^{s_E},
\end{equation}
together with the $L^2$ projection error
$\norm{\Pi_h\xdag-\xdag}\le C_\Pi h^{s'}$. For the five-point scheme and a
smooth state the energy consistency order is the same as the $L^2$ order,
$s_E=2$. To see this, write $e:=\Pi_hu-u_h$ with $u=Kg(\xdag)$ the continuum
state and $u_h=S^{-1}\Pi_hg(\xdag)$ the discrete one, so that
$Se=S\Pi_hu-\Pi_h(-\Delta u)=:\tau$ is the truncation error, of size
$\norm{\tau}\le Ch^2$ for $u\in C^4$. Then
$\norm{e}_E^2=\inner{e}{Se}=\inner{e}{\tau}\le\norm{e}\,\norm{\tau}\le
\lambda_{\min}(S)^{-1/2}\norm{e}_E\norm{\tau}$, hence
$\norm{e}_E\le\lambda_{\min}(S)^{-1/2}\norm{\tau}\le C'h^2$. We confirmed this
numerically on a smooth test state: from $N=15$ to $N=255$ both $\norm{e}$ and
$\norm{e}_E$ decrease at the rate $2.00$ per halving of $h$. We take $s'=2$ for
the nodal projection, state \eqref{eq:consistency} as a hypothesis for general
media, and give the resulting rate in terms of $s_E$; the value $s_E=2$ is the
one used in \cref{cor:coupling}.

\begin{theorem}[Total error]\label{thm:total}
Let $x_h^{\sigma}$ be the truncated estimator of section 3 computed on the
mesh from data $\ys=F(\xdag)+e$, with the discrepancy threshold
$\tau(\sigma_E+C_Fh^{s_E})$, where $\sigma_E\asymp\sigma/h$ is the energy noise
level (2.10) and $s_E$ the energy consistency order of
\eqref{eq:consistency}. Assume \eqref{eq:consistency}, the $L^2$ projection error
above, and that the discrete problem satisfies the order-one source condition for
$\Pi_h\xdag$ and the smallness conditions of \cite{MahaleSingh2022} with constants
uniform in $h$. Then
\begin{equation}\label{eq:total}
  \Bigl(\E\,\norm{x_h^{\sigma}-\xdag}^2\Bigr)^{1/2}
  \le C_1\bigl(\sigma_E+C_Fh^{s_E}\bigr)^{1/2}+C_2\,h^{s'}
  +R\,e^{-\frac14(\beta-1)^2\rho(S)},
\end{equation}
the tail entering as a square root of the bad-event probability, as in
Proposition 3.5. The exponent is $\tfrac12$ because the nonlinear theory
is available only for the order-one source condition; we do not state a
general-$\nu$ total error.
\end{theorem}

The uniformity of the discrete source condition in $h$ is a genuine hypothesis
and is stated as such; preservation of a source condition under discretization
is not automatic. In practice the algorithm is fed a single level. Under the
coupling of \cref{cor:coupling} the discretization term $C_Fh^{s_E}$ is of the
same order as $\sigma_E$, so feeding $\sigma_E$ with a threshold constant $\tau$
enlarged by the factor $1+C_Fh^{s_E}/\sigma_E=O(1)$ is equivalent. Feeding the
$L^2$ level $\sigma$ instead, as the experiments do, under-regularizes on a fixed
mesh once $\sigma\ll C_Fh^{s_E}$, and this is the discretization floor visible
in \cref{fig:coupling}.

\begin{proof}
Split the error into a discretization part and a reconstruction part on the
mesh:
\[
  \norm{x_h^{\sigma}-\xdag}\le\norm{x_h^{\sigma}-\Pi_h\xdag}+\norm{\Pi_h\xdag-\xdag}.
\]
The second term is the projection error, bounded by $C_\Pi h^{s'}$ by
assumption. It is deterministic and gives the term $C_2h^{s'}$ in
\eqref{eq:total}.

For the first term, view the computation on the mesh as an inverse problem for
the discrete operator $F_h$ with exact solution $\Pi_h\xdag$. The data actually
used are
\[
  \ys=F(\xdag)+e=F_h\Pi_h\xdag+\eta+e,\qquad \eta:=(F-F_h\Pi_h)\xdag,
\]
so relative to the discrete problem the data error is $\eta+e$, and we measure it
in the energy norm, as Proposition 3.5 requires. The term $\eta$ is
deterministic, with $\norm{\eta}_E\le C_Fh^{s_E}$ by \eqref{eq:consistency}. The
term $e$ is Gaussian. We repeat the argument of Proposition 3.5 in the
energy norm. On the event $\mathcal{E}=\{\norm{e}_E\le\beta\sigma_E\}$ the total
data error satisfies
$\norm{\eta+e}_E\le C_Fh^{s_E}+\beta\sigma_E\le\beta(\sigma_E+C_Fh^{s_E})$, so the
deterministic theory applies with the effective energy noise level
$\delta_{h,E}:=\beta(\sigma_E+C_Fh^{s_E})$. Under the order-one source condition
the stated parameter choice with this level gives
$\norm{x_h^\sigma-\Pi_h\xdag}_E\le C\,\delta_{h,E}^{1/2}$ on $\mathcal{E}$, with
constants uniform in $h$ by the hypothesis on the discrete source condition, and
hence $\norm{x_h^\sigma-\Pi_h\xdag}\le c_0^{-1/2}C\,\delta_{h,E}^{1/2}$ in $L^2$
by (2.3). On $\mathcal{E}^c$ the truncation gives
$\norm{x_h^\sigma-\Pi_h\xdag}\le R$, and (3.3) bounds the probability
of $\mathcal{E}^c$ by $e^{-\frac12(\beta-1)^2\rho(S)}$. Combining, with the tail
entering as a square root,
\[
  \bigl(\E\norm{x_h^{\sigma}-\Pi_h\xdag}^2\bigr)^{1/2}\le
  C_1(\sigma_E+C_Fh^{s_E})^{1/2}+R\,e^{-\frac14(\beta-1)^2\rho(S)}.
\]
Adding the projection error gives \eqref{eq:total}.
\end{proof}

\begin{corollary}[Optimal coupling]\label{cor:coupling}
For the five-point scheme and a smooth solution, $s_E=s'=2$, so
$\sigma_E\asymp\sigma/h$ and $C_Fh^{s_E}\asymp h^2$. The two data-error terms
balance when $\sigma/h\asymp h^2$, that is at $h\asymp\sigma^{1/3}$ (hence
$n\asymp\sigma^{-2/3}$), giving $\sigma_E\asymp C_Fh^{s_E}\asymp\sigma^{2/3}$,
while the projection error is $h^{s'}\asymp\sigma^{2/3}$, of lower order than
the rate. Then
\begin{equation}\label{eq:coupledrate}
  \Bigl(\E\,\norm{x_h^{\sigma}-\xdag}^2\Bigr)^{1/2}
  =\mathcal{O}\bigl(\sigma_E^{1/2}\bigr)=\mathcal{O}\bigl(\sigma^{1/3}\bigr),
\end{equation}
with no discretization floor. The exponent $\tfrac13$, rather than $\tfrac12$, is
the price of the energy noise factor $\sigma_E\asymp\sigma/h$: the mesh cannot be
refined independently of $\sigma$ without inflating the energy noise level. The
concentration tail becomes $e^{-c\rho(S)}=e^{-c\,\sigma^{-2/3}}$,
super-polynomially small, so it vanishes under the coupling. The naive coupling
$h\asymp\sigma^{1/2}$ used in the experiment below gives
$\sigma_E\asymp\sigma^{1/2}$ and hence the slower rate $\sigma^{1/4}$. If one instead measures success in the energy norm,
or the model supplies noise at level $\sigma$ in the energy norm directly, the
$L^2$ noise factor disappears and the balance improves; we state the $L^2$/energy
distinction rather than hide it. The large-scale experiment of
\cref{sec:largescale} implements the naive $L^2$ coupling $h\asymp\sigma^{1/2}$
and shows only that refining the mesh with $\sigma$ lowers the discretization
floor, as discussed there.
\end{corollary}

\Cref{cor:coupling} answers the practical question of how fine a mesh a given
noise level calls for, and it is what turns the joint $(\sigma,n)$ statement of
Proposition 3.5 into a rate in $\sigma$ alone. It is tested directly in
\cref{sec:largescale}.

\subsection{Large-scale experiments}\label{sec:largescale}

\paragraph{A heterogeneous reconstruction at scale} We take a log-normal
conductivity of contrast $a_+/a_-=40$ and a four-plume source phantom on a
$192\times192$ grid ($n=36{,}864$ unknowns), with full-field data at
$\sigma=10^{-4}$ and an uninformed constant initial guess. Each iteration needs one
sparse factorization and three solves with it. The discrepancy rule stops at $k_\delta=18$ ($56$ solves,
$3.0$\,s), with relative error $0.15$. \Cref{fig:hetero} shows the medium, the
true source, the reconstruction, and the error. All four plumes are recovered,
with the expected smoothing of their peaks. The error tracks the low-conductivity
channels, exactly where \cref{prop:hetero} predicts amplified ill-conditioning.

\begin{figure}[ht]
\centering
\includegraphics[width=\textwidth]{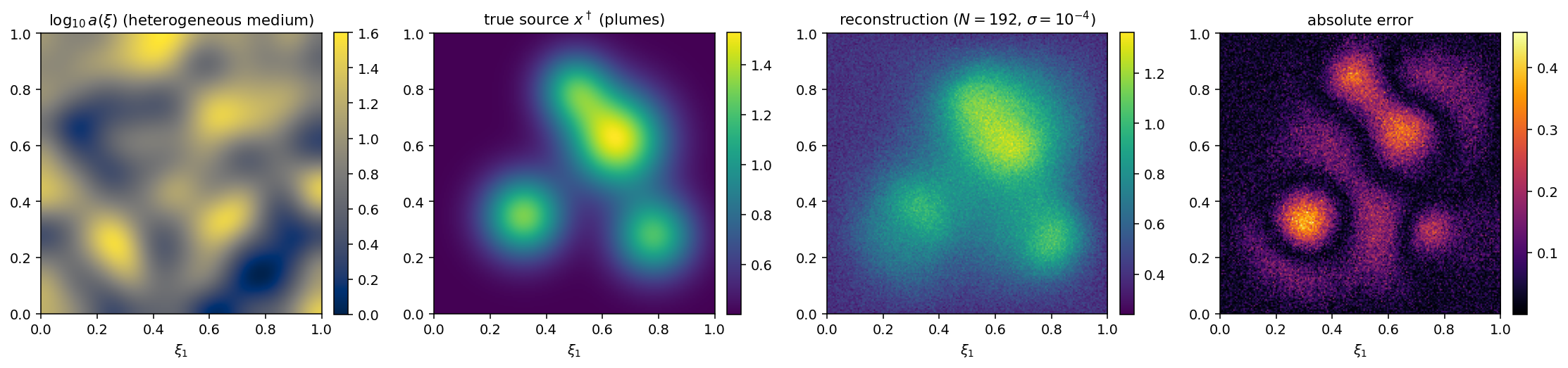}
\caption{Large-scale heterogeneous inverse source problem ($N=192$, $n=36{,}864$).
From left: log-conductivity $\log_{10}a$, true plume source, iterated Lavrentiev
reconstruction, and absolute error. The error concentrates over low-conductivity
regions, consistent with the $a_-^{-1}$ scaling of the constants in
\cref{prop:hetero}.}
\label{fig:hetero}
\end{figure}

\paragraph{Mesh-noise coupling} We test the qualitative content of
\cref{thm:total,cor:coupling} in an inverse-crime-free set-up. ``Continuous''
data are generated on a fine $224\times224$ reference grid and inverted on
coarser working grids, with the error measured on a common $128\times128$ grid.
\Cref{fig:coupling} contrasts a fixed coarse mesh ($N=24$) with a coupled choice
$N(\sigma)=\mathrm{round}(0.45\,\sigma^{-1/2})$, that is, $h\asymp\sigma^{1/2}$,
clipped to $24\le N\le128$; the coupling is therefore active only for
$3\times10^{-4}\gtrsim\sigma\gtrsim3\times10^{-5}$, and the three smallest noise
levels use $N=128$. This is the naive
$L^2$ coupling that ignores the energy noise factor of \cref{cor:coupling}; we
use it because it is the one implemented in the experiment, and we read the
result accordingly. The fixed mesh follows the rate at first but then flattens to
a floor $\approx3.4\times10^{-2}\approx h$ for $\sigma\lesssim3\times10^{-6}$, and
its fitted exponent drops to $0.23$. This is the discretization floor of
\cref{thm:total}. The coupled mesh stays below this floor and reaches
$8.7\times10^{-3}$ at $\sigma=10^{-6}$, with fitted exponent $0.39$. Refining the
mesh with $\sigma$ lowers the floor, which is the qualitative prediction of
\cref{cor:coupling}; because the mesh is capped at $N=128$, the curve would
eventually meet the floor of that mesh. The fitted
exponent $0.39$ measures the $L^2$ error under the naive coupling; it lies below
the $\tfrac12$ this coupling would give in the absence of the energy noise
factor and above the $\tfrac14$ that \cref{thm:total} predicts for it once
$\sigma_E\asymp\sigma/h$ is accounted for; the optimal coupling
$h\asymp\sigma^{1/3}$ of \cref{cor:coupling} would give $\tfrac13$. We therefore read the figure as showing that refining the
mesh with $\sigma$ lowers the discretization floor, the qualitative content of
\cref{thm:total}, and not as a measurement of a specific rate exponent.

\begin{figure}[ht]
\centering
\includegraphics[width=0.6\textwidth]{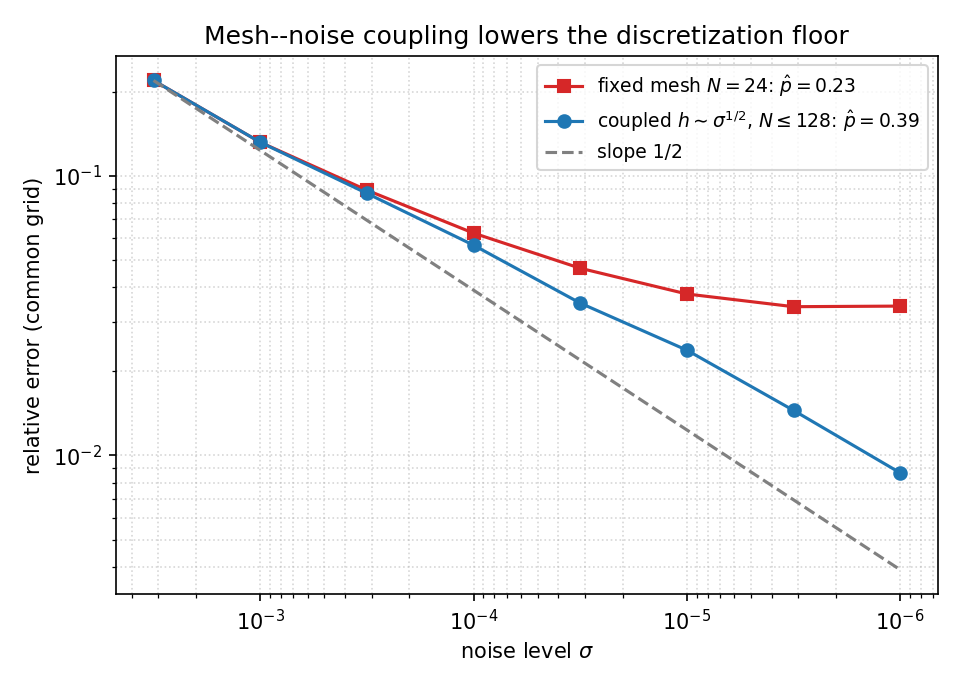}
\caption{Mesh-noise coupling (inverse-crime-free; error on a common grid). The
fixed coarse mesh (red) plateaus at the discretization floor predicted by
\cref{thm:total}. The coupled mesh $h\asymp\sigma^{1/2}$, clipped to $N\le128$
(blue), stays below that floor over the tested range; its fitted exponent
($0.39$) is not read as a rate (see \cref{cor:coupling} for the energy-norm
correction).}
\label{fig:coupling}
\end{figure}

\paragraph{Comparison with Tikhonov--Gauss--Newton} On a $112\times112$
heterogeneous problem at $\sigma=2\times10^{-4}$ we compare the iterated
Lavrentiev method with an iteratively regularized Gauss--Newton (Tikhonov)
scheme, whose normal equations $(F'^{*}F'+\alpha I)$ need the adjoint and an
inner conjugate-gradient solve. \Cref{tab:compare} reports the outcome. Gauss--Newton
reaches a somewhat smaller error ($0.122$ against $0.155$). Lavrentiev needs an
order of magnitude fewer applications of the linear solver ($50$ against $525$)
and never forms the adjoint or the normal equations. It has no conditioning
advantage on this problem, however: at the stopping level
$\alpha\approx3\times10^{-3}$ the Lavrentiev matrix
$\mathrm{diag}(g'(x))+\alpha S_a$, which inherits the stiffness of $S_a$, has
condition number $3.9\times10^{3}$, whereas the normal operator $F'^{*}F'+\alpha I$
has condition number $1.9$, because $\norm{F'}^2\approx(8/68)^2$ is small against
$\alpha$; this is why the inner conjugate-gradient solves converge in a few
iterations. The wall-clock times on this problem are similar, $0.68$\,s against
$0.60$\,s, and the next paragraph explains why. The advantage of the
adjoint-free scheme is therefore in the absence of an adjoint and in the count
of operator applications, and it turns into a time advantage only when a
factorization cannot be reused, when the adjoint is expensive, or when the
forward map is not self-adjoint.

\paragraph{What is counted, and what is not matched} So that the comparison
can be judged, here is what the numbers mean. A linear solve is one application
of a sparse direct solver: for Lavrentiev, one solve with the factorization of
$\mathrm{diag}(g'(x))+\alpha S_a$ or of $S_a$; for Gauss--Newton, one solve with
the factorization of $S_a$. Each Lavrentiev iteration costs three such solves
(one forward evaluation $F(x)$, one resolvent solve for the discrepancy
functional, one for the update) and, in addition, one new sparse LU
factorization, because the matrix $\mathrm{diag}(g'(x))+\alpha S_a$ changes with
the iterate and with $\alpha$. The pass at which the rule fires costs two
solves and no update, so stopping at index $k_\delta$ costs $3k_\delta+2$
solves and $k_\delta+1$ factorizations: $56=3\times18+2$ for the run in the
previous paragraph, and $50=3\times16+2$ with $17$ factorizations for the
comparison here. The Gauss--Newton scheme counts, per outer iteration, one
forward evaluation and one application of $S_a^{-1}$ for the right-hand side,
plus two applications of $S_a^{-1}$ per conjugate-gradient iteration; the
factorization of $S_a$ is computed once and reused throughout, and the inner
solve runs to a relative residual of $10^{-6}$ with at most $400$ iterations,
unpreconditioned. Both methods test the discrepancy before updating, so each
returns the first iterate that satisfies its rule. So Lavrentiev spends its
time in $17$ factorizations and Gauss--Newton in $525$ cheap triangular solves,
which is why the times are similar although the solve counts differ tenfold. Neither
method is preconditioned. The stopping rules are not identical: Lavrentiev uses
the resolvent discrepancy (1.5), Gauss--Newton the plain residual
discrepancy $\norm{F(x)-\ys}\le\tau\sigma$, both with $\tau=1.4$ and fed only
$\sigma$. The noise in all experiments of this section is Gaussian with
$\E\norm{e}^2=\sigma^2$, generated as in (2.9): one draw for the
large-scale and comparison runs, four draws per level for the coupling study.
We do not claim more than the table shows: a somewhat larger error at a tenth
of the solver applications and similar wall-clock time, for one problem, with
the caveats above.

\begin{table}[ht]
\centering
\caption{Iterated Lavrentiev against Tikhonov--Gauss--Newton on a $112\times112$
heterogeneous inverse source problem ($\sigma=2\times10^{-4}$).}
\label{tab:compare}
\small
\begin{tabular}{lcccc}
\toprule
method & adjoint? & linear solves & rel.\ error & subproblem cond.\ at stop \\
\midrule
iterated Lavrentiev (this work) & no  & $50$  & $0.155$ & $3.9\times10^{3}$ \\
Tikhonov--Gauss--Newton         & yes & $525$ & $0.122$ & $1.9$ \\
\bottomrule
\end{tabular}
\end{table}

\section*{References}
\bibliographystyle{iopart-num}
\bibliography{references}